\numberwithin{equation}{section}
\newtheorem{prop}{Proposition}
\newtheorem{lem}{Lemma}
\newtheorem{thm}{Theorem}
\newtheorem{cor}{Corollary} 
\newtheorem{rem}{Remark}
\theoremstyle{definition}
\newtheorem{defn}{Definition}
\newtheorem{ex}{Example}
\def\cH{\mathcal{H}}
\def\cR{\mathcal{R}}
\def\cO{\mathcal{O}}
\def\vac{|0\rangle}
\DeclareMathOperator{\End}{End}
\DeclareMathOperator{\Id}{Id}
\DeclareMathOperator{\dive}{div}
\DeclareMathOperator{\lie}{Lie}
\DeclareMathOperator{\vol}{vol}
\begin{document}
\title[Superconf. structures on generalized Calabi-Yau metric manifolds]{Superconformal structures on\\
 Generalized Calabi-Yau metric manifolds}
\author{Reimundo Heluani$^1$}
\address{$^1$Department of Mathematics, University of California, Berkeley, CA 94720, USA}

\author{Maxim Zabzine$^2$}
\address{$^2$Department of Physics and Astronomy, 
     Uppsala university,
     Box 516, SE-751\;20 Uppsala,
     Sweden}

\email{heluani@math.berkeley.edu, Maxim.Zabzine@fysast.uu.se}
\begin{abstract}
We construct an embedding of two commuting copies of the $N=2$ superconformal vertex algebra in the space of global sections of the twisted \emph{chiral-anti-chiral de Rham complex} of a generalized Calabi-Yau metric manifold, including the case when there is a non-trivial $H$-flux and non-vanishing dilaton. The $4$ corresponding BRST charges are well defined on any generalized K\"ahler manifold. This allows one to consider the \emph{half-twisted} model defining thus the \emph{chiral de Rham complex} of a generalized K\"ahler manifold. The classical limit of this result allows one to recover the celebrated \emph{generalized K\"ahler identities} as the degree zero part of an infinite dimensional Lie superalgebra attached to any generalized K\"ahler manifold.  As a byproduct of our study we investigate the properties of generalized Calabi-Yau metric manifolds in the Lie algebroid setting. 

\end{abstract} 
\maketitle

\section{Introduction}
On a K\"ahler manifold one has a decomposition of the sheaf of differential forms into a bi-complex $(\wedge^{p,q} T^*, \partial, \overline{\partial})$. An analog of such a decomposition exists for any generalized K\"ahler manifold \cite{gualtieri3}. The aim of this article is to provide an \emph{affine} or \emph{chiral} analog of this result in the case when the manifold is generalized Calabi-Yau metric, extending thus the results in \cite{heluani8} and producing the quantum counterpart of the results in \cite{Bredthauer:2006hf, MR2322403}.

To any differentiable manifold $M$ one can associate a sheaf of vertex algebras $\mathrm{CDR}(M)$ \cite{malikov}. More generally, given any Courant algebroid $E$ one constructs a sheaf of SUSY vertex algebras $U^\mathrm{ch}(E)$ \cite{heluani8}. When $E$ is endowed with a \emph{generalized Calabi-Yau} structure, there is an embedding of the $N=2$ superconformal vertex algebra into the global sections of $U^\mathrm{ch}(E)$ \cite{heluani9}. In the usual Calabi-Yau case, it was shown in \cite{heluani8} that one can in fact construct two commuting copies of the $N=2$ superconformal structure, each with central charge $\tfrac{3}{2}\mathrm{dim} M$. In this article we combine and generalize these results to the case when $E$ is endowed with a \emph{generalized Calabi-Yau metric structure} as defined in \cite{gualtieri1}. 

An interesting new phenomenon in this article is that in the presence of a non-trivial $H$-flux, the \emph{dilaton} field plays a crucial role in all of our formulas. This feature is well known in the physics literature.   

On a given K\"ahler manifold $M$ with Hermitian metric $g$, the existence of a global holomorphic volume form $\Omega$ is intimately related with the vanishing of the Ricci curvature of $g$ and with the fact that the holonomy of $M$ reduces to $SU(n)$. The metric $g$ gives rise to a volume form $\vol_g$ and the global holomorphic volume form satisfies $\Omega \wedge \overline{\Omega} = \vol_g$. The volume form $\Omega$ is covariantly constant with respect to the Levi-Civita connection of $g$ and the vanishing of the Ricci tensor is expressed in holomorphic coordinates by $\partial_\alpha \partial_{\bar \beta} \log \sqrt{ \det g}= 0$. In the generalized K\"ahler case the situation is subtler. There exists a dictionary between generalized K\"ahler manifolds and  bihermitian manifolds \cite{gualtieri1}.  The latter are  bihermitian manifolds $(M, J_\pm, g)$ with  two connections $\nabla^\pm$  with torsion encoded by a closed three form and such that $\nabla^\pm J_\pm=0$. To the data of a generalized Calabi-Yau metric manifold we can associate two holomorphic volume forms $\Omega_\pm$ which are holomorphic with respect to $J_\pm$. These in turn give rise to a unique volume form $\nu=\Omega_\pm \wedge \overline{\Omega_\pm}$ and the ratio between this volume form and the Riemannian volume form defines the dilaton $\Phi$ by $\nu = e^{-4\Phi}\vol_g$. It is not the holomorphic volume forms $\Omega_\pm$ that enter in the fields of the $N=2$ structure, but rather the forms corrected by the dilaton $e^{-2\Phi} \Omega_\pm$ which become covariantly constant with respect to $\nabla^\pm$. The analogous statement to the vanishing of the Ricci tensor becomes \[ \partial_\alpha \partial_{\bar \beta} \left( \log \left( e^{-4 \Phi} \sqrt{\det g} \right) \right)= 0, \] where we use holomorphic coordinates for either complex structure. We show in Section \ref{sec:5} that these statements correspond to the \emph{unimodularity} of the Lie algebroids corresponding to the generalized complex structures $\mathcal{J}_{1,2}$. 

The existence of $N=2$ superconformal supersymmetry allows us to perform a topological twist and in particular consider the BRST cohomology. When we have two commuting copies of the superconformal algebra we may perform the topological twist in one of the two sectors, say the plus sector, and consider its BRST cohomology. Carrying out this construction on a generalized Calabi-Yau metric manifold, produces a sheaf of SUSY vertex algebras with a remaining $N=2$ superconformal structure (that of the minus sector). In the case when $M$ is a \emph{usual} Calabi-Yau manifold, this sheaf is isomorphic to the  \emph{chiral de Rham complex} of $M$ defined in \cite{malikov}, together with its topological structure.
Moreover, since in order to consider the BRST cohomology we need only the zero modes of fields to be well defined (as opposed to the full superconformal algebra) we may perform the above mentioned \emph{half-twisting} procedure to obtain a sheaf of SUSY vertex algebras on any generalized K\"ahler manifold $M$, we call this sheaf the \emph{chiral de Rham complex of $M$}, a name that is justified since in the usual K\"ahler case we recover the construction of \cite{malikov} in the holomorphic setting.

In the usual K\"ahler case, the \emph{holomorphic} chiral de Rham complex of $M$ can be described purely in terms of holomorphic data by generators and relations. The interpretation of this sheaf as a half-twisted model was given in \cite{witten} and in the supersymmetric setting in \cite{kapustin}. The situation in the generalized K\"ahler case is subtler since there is no obvious notion of what ``holomorphic data'' means. In Theorem \ref{thm:holomorphiccdr} below, we give such a description, after developing rudimentary notions of differential calculus on generalized K\"ahler manifolds. This result extends that of \cite{witten} \cite{kapustin} to the generalized K\"ahler case with or without $H$-flux, while at the same time we find an interesting new spin (see Remark \ref{rem:buenisimo}).  In the bihermitian setup one can attach (a twisted version of) the holomorphic chiral de Rham complex of \cite{malikov} to each one of the two Hermitian complex structures. We show that these sheaves agree with the ones constructed by BRST reduction.

The existence of two commuting conformal structures allows us to consider $U^\mathrm{ch}(E)$ as a formal Hamiltonian quantization of the sigma-model with target a generalized Calabi-Yau metric manifold. In general, consider a vertex algebra $V$ endowed with two commuting Virasoro fields $L^\pm(z)$. Suppose moreover that $L = L^+ + L^-$ is a \emph{conformal structure on $V$} \cite{kac:vertex} i.e. $L_0$ acts diagonally and $L_{-1} = T$, the translation operator on $V$. Consider the formal change of coordinates $z = e^{i\sigma}$ and the Hamiltonian 
\begin{equation} H = i \int \left( L^+ - L^- \right) d\sigma~.
\end{equation}
For any state $a \in V$, we can impose the equations of motion
\begin{equation}
\partial_\tau Y(a,\sigma) = [Y(a,\sigma),H]~,
\label{eq:0.2a}
\end{equation}
to obtain a state field correspondence $a \mapsto Y(a,\sigma,\tau)=Y(a,z,\bar{z})$, where $z = e^{i\sigma + \tau}$, and $\bar{z} = e^{i \sigma - \tau}$,
 so that the zero mode of $L^+$ acts as as $\partial_z$ and the zero mode of $L^-$ acts as $\partial_{\bar{z}}$. With these considerations, we obtain the equations of motion for the quantum non-linear sigma model with target a generalized Calabi-Yau metric manifold, very much 
  in analogy to standard Calabi-Yau story \cite{heluani10}.

The organization of this article is as follows. In section \ref{sec:1} we fix notations and briefly recall the definitions of SUSY vertex algebras. In section \ref{sec:2} we recall the basic definitions of generalized K\"ahler and Calabi-Yau metric manifolds. In section \ref{sec:3} we recall the construction of the sheaf of SUSY vertex algebras $U^\mathrm{ch}(E)$.  In section \ref{sec:4} we recall the connection with bihermitian geometry and we introduce the basic local coordinate frames that will play an important role in the computations in latter sections. In section \ref{sec:5} we collect some useful Lemmas about unimodularity in generalized Calabi-Yau metric manifolds, we collect some results scattered in the literature and produce some new ones. In particular, we clarify the connection between generalized Calabi-Yau metric manifolds as in \cite{gualtieri1} and their bihermitian counterpart. In section \ref{sec:6} we state and prove the main results of this article. In section \ref{sec:7} we study the topological twists and corresponding BRST cohomologies. We define here the chiral de Rham complex for a Generalized K\"ahler manifold. We develop in this section the rudiments of differential calculus on generalized K\"ahler manifolds and show that the chiral de Rham complex can be described entirely interms of \emph{holomorphic} data. In section \ref{sec:8} we present a brief summary and discussion of the results in the present article. 

{\bf Acknowledgements:} 
We thank Nigel Hitchin, Chris Hull, Ulf Lindstr\"om, Maciej Szczesny, Rikard von Unge and Frederik Witt for the discussions for this and related 
 subjects.  In particular we are grateful to Jian Qiu for inspiring discussions and a few useful suggestions. 
 The research of R.H. was supported by NSF grant DMS-0635607002.
The research of M.Z. is supported by VR-grant 621-2008-4273.

\section{Preliminaries on SUSY vertex algebras} \label{sec:1}
In this section we collect some
results on SUSY vertex algebras from \cite{heluani3}. 
\begin{defn}[\cite{heluani3}]
    An $N_K=1$ SUSY vertex algebra consists of the data of a vector space $V$,
    an even vector $\vac \in V$ (the vacuum vector), an odd endomorphism
    $S$ (whose square is an even endomorphism we denote $T$),
    and a parity preserving linear map $A \mapsto Y(A,z,\theta)$ from
     $V$ to $\End(V)$-valued fields (the state-field correspondence). This
     data should satisfy the following set of axioms:
    \begin{itemize}
	\item For any $A$, $Y(A,z,\theta)$ is a field, namely
	\[ Y(A,z,\theta) B \in V[ [z]][\theta], \qquad \forall B \in V~,\]
    \item Vacuum axioms:
        \begin{equation*}
            \begin{aligned}
            Y(\vac, z,\theta) &= \Id, \\
            Y(A, z,\theta) \vac &= A + O(z,\theta), \\
            S \vac &= 0.
            \end{aligned}
            %\label{eq:2.3.1}
        \end{equation*}
    \item Translation invariance:
        \begin{equation*}
            \begin{aligned}
            {[} S, Y(A,z,\theta)] &= (\partial_\theta - \theta \partial_z)
            Y(A,z,\theta),\\
            {[}T, Y(A,z,\theta)] &= \partial_z Y(A,z,\theta).
        \end{aligned}
            %\label{eq:2.3.2}
        \end{equation*}
    \item Locality:
        \begin{equation*}
            (z-w)^n [Y(A,z,\theta), Y(B,w,\zeta)] = 0, \qquad n \gg 0.
            %\label{eq:2.3.3}
        \end{equation*}
     \end{itemize}
    \label{defn:2.3}
\end{defn}
    Given a $N_K=1$ SUSY vertex algebra $V$ and a vector $A \in V$, we expand the fields
    \begin{equation*}
        Y(A,z,\theta) = \sum_{\stackrel{j \in \mathbb{Z}}{J = 0,1}} Z^{-1-j|1-J}
        A_{(j|J)},
        %\label{eq:2.4.1}
    \end{equation*}
    and we call the endomorphisms $A_{(j|J)}$ the \emph{Fourier modes} of
    $Y(A,Z)$. Define now the operations:
    \begin{equation}
        \begin{aligned}
            {[}A_\Lambda B] &= \sum_{\stackrel{j \geq 0}{J = 0,1}}
            \frac{\Lambda^{j|J}}{j!} A_{(j|J)}B, \\
            A B &= A_{(-1|1)}B.
        \end{aligned}
        \label{eq:2.4.2}
    \end{equation}
    The first operation is called the $\Lambda$-bracket and it encodes all the information in the OPE of the superfields $Y(A,z,\theta)$ and $Y(B,z,\theta)$. The second operation is
    called the \emph{normally ordered product}, the set of axioms that these operations satisfy are summarized in Appendix \ref{sec:appendixc}

\section{Preliminaries in geometry} \label{sec:2} 
In this section we recall the basic definitions of generalized complex geometry
following \cite{gualtieri1} and \cite{gualtieri2}. 

Let $M$ be a smooth manifold and denote by $T$ the tangent bundle of
$M$.
\begin{defn}
	A \emph{Courant algebroid} is a vector bundle $E$ over $M$, equipped with a
	nondegenerate symmetric bilinear form $\langle ,\rangle $ as
	well as a bilinear bracket $[,]$ on
	$C^\infty(E)$ and with a smooth bundle map $\pi: E
	\rightarrow  T$ called the anchor. 

	 	 These structures should satisfy the following five axioms 
	\begin{enumerate}
		\item $\pi([A,B]) = [\pi(A), \pi(B)], \quad \forall A, B
			\in C^\infty(E)$. 
		\item The bracket $[,]$ should satisfy the  Leibniz identity. 
			\[ [A, [B, C]] = [[A,B],C]+ [B, [A,C]], \quad \forall A, B, C \in
			C^\infty(E) \]
		\item $[A, f B] = f [A, B] + (\pi(A) f)B$, for all $A, B \in C^\infty(E)$ and
			$f \in C^\infty(M)$, 
		\item $ \langle A, [B, C] + [C,B] \rangle = \pi(A) \langle B, C\rangle, \quad \forall A, B, C \in
			C^\infty(E) $
		\item $\pi(A)\langle B, C\rangle  = \langle [A, B]  , C\rangle  + \langle B, [A, C]
			\rangle , \quad \forall A, B, C \in
			C^\infty(E)$. 
	\end{enumerate}
	We can introduce a natural
	 differential operator $\mathcal{D}: C^\infty(M) \rightarrow
	 C^\infty(E)$ as $\langle \mathcal{D}f, A\rangle  = \tfrac{1}{2} \pi(A) f$ for
	all $f \in C^\infty (M)$ and $A \in
	C^\infty(E)$. Thus property (4) becomes 
	$$ [B, C] + [C, B] = \mathcal{D} \langle B, C \rangle ~.$$
 Another useful identity implied by the definition is 	
	$\pi \circ \mathcal{D} = 0$, i.e. $\langle \mathcal{D} f,
			\mathcal{D} g\rangle  = 0,
			\quad \forall f,g \in C^\infty(M)$.  The bracket $[,]$ is called the \emph{Dorfman} bracket, 
    in some situations it is convenient to use the antisymmetric version, the Courant bracket $[~,~]_c$
     which is related to Dorfman bracket as follows 
     \begin{equation}
	[A, B] = [A, B]_c +
\mathcal{D} \langle X, Y \rangle.
\label{eq:dorf}
\end{equation}
A Courant algebroid $E$ is called \emph{exact} if the following
	sequence is exact:
	\[ 0 \rightarrow T^* \xrightarrow{\pi^*} E \xrightarrow{\pi} T
	\rightarrow 0, \]
	where we use the inner product in $E$ to identify it with its
	dual. In this case  it is possible to choose an isotropic splitting $s: T \rightarrow E$ for $\pi$
	 giving rise to an isomorphism $E \cong T \oplus T^*$  taking the Dorfman bracket to that given in the example below. 
	\label{defn:1}
\end{defn}
\begin{ex}
	$E=(T \oplus T^*)\otimes \mathbb{C}$, $\langle,\rangle$ and $[,]$ are respectively the natural symmetric pairing and
	the Dorfman bracket defined as:
	\begin{equation*}
		\begin{aligned} \langle X + \zeta, Y + \eta\rangle &= \frac{1}{2} \left( i_X \eta + i_Y\zeta
\right).  \\
 {[X + \zeta}, Y + \eta] &= [X, Y] + \mathrm{Lie}_X \eta -
 i_Y d\zeta + i_Y i_X H~,
\end{aligned}
\end{equation*}
\label{ex:courant1}
 where $H$ is a closed three form. 
\end{ex}
In the rest of this article all Courant algebroids will be assumed to be exact unless noted.
\begin{defn}[{\cite[4.14]{gualtieri1}}]
	A generalized almost complex structure on a real $2n$-dimensional manifold
	$M$ is given by the following equivalent data:
	\begin{itemize}
		\item an endomorphism $\mathcal{J}$ of $E$ which is
			orthogonal with respect to the inner product
			$\langle, \rangle$ and $\mathcal{J}^2 = - 1$.
		\item a maximal isotropic sub-bundle $L \subset E \otimes
			\mathbb{C}$ of real index zero, i.e. $L \cap \bar{L} = 0$. 
		\item a pure spinor line sub-bundle $U \subset \bigwedge^* T^* \otimes
			\mathbb{C}$, called the \emph{canonical line bundle}
			satisfyinng $(\varphi, \bar{\varphi}) \neq 0$ at each point
			$x \in M$ for any generator $\varphi \in U_x$. 
	\end{itemize}
	\label{defn:almost-complex}
\end{defn}
Here $(~,~): \wedge^* T^* \otimes \wedge^* T^* \rightarrow \det T^*$ is the Mukai pairing which  is an 
 invariant bilinear form on the spinors of $E \cong T\oplus T^*$ defined as
 \begin{equation*}
   (\varphi, \psi) \equiv [\varphi^\top \wedge \psi ]_{\rm top}~,
 \end{equation*}
  where $\varphi^\top$ denotes the antiautomorphism of the Clifford algebra applied to $\varphi$, 
   see \cite{gualtieri2} for an extensive explanation on the subject.

The fact that $L$ is of real index zero implies 
\begin{equation*}
	E\otimes \mathbb{C} \simeq (T \oplus T^*) \otimes \mathbb{C} = L \oplus \bar{L} = L \oplus L^*, 
\end{equation*}
using $\langle, \rangle$ to identify $\bar{L}$ with $L^*$.
\begin{defn}[{\cite[4.18]{gualtieri1}}]
	A generalized almost complex structure $\mathcal{J}$ is said to be
	integrable to a generalized complex structure when its $+i$-eigenvalue $L \subset
	E \otimes \mathbb{C}$ is Courant (Dorfman) involutive. 
	\label{defn:complex}
\end{defn}
 We refer to a manifold admitting integrable generalized complex structure as generalized complex manifold.
 
 \begin{prop}[{\cite{Lindstrom:2004iw,  gualtieri2}}] 
  Every generalized complex manifold is a Poisson manifold, i.e. it admits a bivector $P = P^{ij} \partial_i \wedge \partial_j$ such that 
   \begin{equation*}
   P^{ik} \partial_k P^{jl} +   P^{jk} \partial_k P^{li} +  P^{lk} \partial_k P^{ij} =0~.
 \end{equation*}
  We refer to such $P$ as Poisson structure. 
  \end{prop}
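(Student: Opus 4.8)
The plan is to extract the candidate Poisson bivector directly from the block form of $\mathcal{J}$ and to show that its failure to be Poisson is measured by the tangent projection of the integrability tensor of $\mathcal{J}$, which vanishes by hypothesis. First I would write $\mathcal{J}$ as an endomorphism of $E \cong (T\oplus T^*)\otimes\mathbb{C}$ in block form with respect to the splitting, so that for every $f \in C^\infty(M)$ one has $\mathcal{J}\,df = X_f + \xi_f$ with $X_f := \pi(\mathcal{J}\,df) \in T$ and $\xi_f \in T^*$, where $\pi$ is the anchor. Orthogonality of $\mathcal{J}$ with respect to $\langle,\rangle$ forces the upper-right block $T^* \to T$, $\xi \mapsto \pi(\mathcal{J}\xi)$, to be skew, hence to define a genuine bivector $P = P^{ij}\partial_i\wedge\partial_j$ with $X_f = P(df)$. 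I then set $\{f,g\} := \iota_{X_f}dg = P(df,dg)$ and note that the coordinate identity in the statement is exactly the Jacobi identity for $\{,\}$, equivalently $[P,P]_{\mathrm{SN}}=0$.

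Next I would invoke the integrability hypothesis in the form that the Courant--Nijenhuis tensor
\[ \mathcal{N}(a,b) := [\mathcal{J}a, \mathcal{J}b] - \mathcal{J}[\mathcal{J}a, b] - \mathcal{J}[a, \mathcal{J}b] - [a,b] \]
vanishes on all real sections $a,b$ of $E$; this is the standard reformulation of involutivity of the $+i$-eigenbundle $L$, since $a - i\mathcal{J}a \in L$ for every real $a$, and $\mathcal{N}=0$ is precisely the content of $[L,L]\subset L$ after separating real and imaginary parts. The key idea is to evaluate $\mathcal{N}$ on the exact one-forms $a=df$, $b=dg$ and then apply the anchor $\pi$ to land in $T$, where the $H$-flux and cotangent contributions that clutter the Dorfman bracket drop out.

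Carrying this out, the Dorfman formula of Example \ref{ex:courant1} gives $[df,dg]=0$ and $[df,\mathcal{J}dg]=0$ (both because $df,dg$ have vanishing anchor and $d(df)=0$), while $[\mathcal{J}df,dg] = \mathrm{Lie}_{X_f}dg = d(\iota_{X_f}dg) = d\{f,g\}$ is an exact one-form. Applying the anchor, axiom (1) of Definition \ref{defn:1} gives $\pi[\mathcal{J}df,\mathcal{J}dg] = [X_f,X_g]$, while $\pi\mathcal{J}(d\{f,g\}) = X_{\{f,g\}}$ by the very definition of $X$. Hence $\pi\,\mathcal{N}(df,dg) = [X_f,X_g] - X_{\{f,g\}}$, and $\mathcal{N}=0$ yields
\[ [X_f, X_g] = X_{\{f,g\}}, \qquad \forall f,g \in C^\infty(M). \]

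Finally I would deduce the Jacobi identity from this homomorphism property by the standard manipulation $\{\{f,g\},h\} = X_{\{f,g\}}(h) = [X_f,X_g](h) = X_f\{g,h\} - X_g\{f,h\} = \{f,\{g,h\}\} - \{g,\{f,h\}\}$, which upon using skew-symmetry of $\{,\}$ is exactly the cyclic Jacobi identity; rewriting in local coordinates produces the stated relation for $P^{ij}$. The only genuinely delicate point is the middle step: one must verify that projecting the integrability tensor to $T$ on exact one-forms really annihilates every term except the two that assemble into $[X_f,X_g]-X_{\{f,g\}}$, and in particular that $[\mathcal{J}df,dg]$ is \emph{exact}, so that $\mathcal{J}$ applied to it has a well-defined, $H$-independent tangent part, and that the anchor compatibility axiom may be applied termwise. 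Everything else is bookkeeping.
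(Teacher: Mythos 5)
Your proof is correct, but there is nothing in the paper to measure it against: this proposition is stated as an imported result, cited from \cite{Lindstrom:2004iw,gualtieri2}, and no proof appears in the text. Your argument is the standard one in spirit: extract $P=\pi\circ\mathcal{J}|_{T^*}$, show that integrability makes $f\mapsto X_f=P(df)$ intertwine the induced bracket $\{\cdot,\cdot\}$ with the Lie bracket of vector fields, and deduce Jacobi. The key steps check out: since $df$ is closed and has zero anchor, the Dorfman formula of Example \ref{ex:courant1} gives $[df,dg]=0$, $[df,\mathcal{J}dg]=0$ and $[\mathcal{J}df,dg]=\mathrm{Lie}_{X_f}dg=d\{f,g\}$ (the $H$-term drops because it needs two tangent entries), and the anchor axiom (1) of Definition \ref{defn:1} then yields $\pi\mathcal{N}(df,dg)=[X_f,X_g]-X_{\{f,g\}}$. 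Your reformulation of involutivity of $L$ as $\mathcal{N}=0$ on real sections is also right, and is unproblematic for the Dorfman bracket because $L$ is isotropic, so the symmetric correction $\mathcal{D}\langle\cdot,\cdot\rangle$ relating the Dorfman and Courant brackets vanishes on pairs of sections of $L$. One small imprecision: skewness of the block $T^*\to T$ does not follow from orthogonality alone; you need orthogonality together with $\mathcal{J}^2=-1$, which combine to give the skew-adjointness $\langle\mathcal{J}a,b\rangle=-\langle a,\mathcal{J}b\rangle$, and it is the latter that makes $P$ a genuine bivector. Both properties are of course part of Definition \ref{defn:almost-complex}, so this is a matter of attribution rather than a gap. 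Note finally that your $P$ agrees, up to normalization, with the tensor the paper uses later in \eqref{definitionPoisson}, so your construction is the one the rest of the text relies on.
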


In this case, $(L,L^*)$ is a Lie bi-algebroid (that is, both $L$ and its dual $L^*$ are naturally Lie algebroids in a suitably compatible manner), and $E\otimes
\mathbb{C}$ could be viewed as its \emph{Drinfeld double}. Note that $E$ acts on the sheaf of differential forms
$\bigwedge^\bullet T^*$ via the spinor representation, and this sheaf acquires 
a different grading by the eigenvalues of $\mathcal{J}$ acting via the
spinor representation: 
\begin{equation} 
\bigwedge T^* = U_{-n} \oplus \dots \oplus
U_{n}.
\label{eq:ukn}
\end{equation}
	Clifford multiplication by sections of $\overline{L}$
(resp. $L$)
increases (resp. decreases) the grading. $U_{-n}= U_{\mathcal{J}}$ is called the \emph{canonical bundle}
of $(M, \mathcal{J})$.
\begin{defn}
	A generalized complex manifold $(M, \mathcal{J})$ is called
	generalized Calabi-Yau if the bundle $U_\mathcal{J}$ is
	\emph{holomorphically trivial}. This is equivalent to the existence of a nowhere 
    vanishing global section $\rho \in C^\infty(U_{\mathcal{J}})$  (a non-vanishing pure spinor)	
	satisfying $d_H \rho=0$, where $d_H= d + H \wedge $ is the twisted de Rham differential. 
	\label{defn:gcy}
\end{defn}

\begin{defn}[{\cite[Def. 6.3]{gualtieri1}}] A \emph{generalized K\"ahler} structure is a commuting pair $(\mathcal{J}_1, \mathcal{J}_2)$ of generalized complex structures such that $G = - \mathcal{J}_1 \mathcal{J}_2$ is a positive definite metric on $E$. 
\label{defn:gk}
\end{defn}
\begin{ex}
Let $(g, J, \omega)$ be a usual K\"ahler manifold, then the following generalized complex structures:
\begin{equation}
\mathcal{J}_1 = \begin{pmatrix} -J & 0 \\ 0  &  J^* \end{pmatrix}, \qquad \mathcal{J}_2 = \begin{pmatrix} 0 &  \omega^{-1} \\ -\omega & 0 \end{pmatrix} 
\label{eq:3.1}
\end{equation}
commute and 
\begin{equation}
G = - \mathcal{J}_1 \mathcal{J}_2 = \begin{pmatrix} 0 & g^{-1} \\ g & 0 \end{pmatrix} 
\label{eq:3.2}
\end{equation} 
is a positive definite metric on $T \oplus T^*$.
\label{ex:gkusual}
\end{ex}
  The following notation is taken from \cite{gualtieri1}. Since $\mathcal{J}_1$ and $\mathcal{J}_2$ commute, we have the following decomposition
\begin{equation}
E  \otimes \mathbb{C} \cong (T \oplus T^*) \otimes \mathbb{C}= L_1^+ \oplus L_1^- \oplus \overline{L_1^+} \oplus \overline{L_1^-}~,
\label{eq:3.3}
\end{equation}
where $L_1 = L_1^+ \oplus L_1^-$ is the $+i$ eigenvalue bundle for  $\mathcal{J}_1$ and  
 $L_2 = L_1^+ \oplus \overline{L_1^-}$ is the $+i$ eigenvalue bundle for  $\mathcal{J}_2$.
 The Courant integrability of both $\mathcal{J}_1$ and $\mathcal{J}_2$ imply that each of the terms in the RHS of \eqref{eq:3.3} is Courant involutive.
If we define $C_\pm$ to be the $\pm 1$ eigenbundle of $G$, we obtain that 
\begin{equation}
C_\pm \otimes \mathbb{C} = L_1^\pm \oplus \overline{L_1^\pm},
\label{eq:3.4}
\end{equation}
Note that $C_+$ (resp $C_-$) is positive definite (resp. negative definite) with respect to the inner product on $E$.

\begin{defn}
A \emph{generalized Calabi-Yau metric} manifold is a generalized K\"ahler manifold $(M, \mathcal{J}_1, \mathcal{J}_2)$ such that both $(M, \mathcal{J}_1)$ and $(M, \mathcal{J}_2)$ are generalized Calabi-Yau with the corresponding $d_H$-closed pure spinors $\rho_1$ and $\rho_2$
  satisfying the following normalization condition 
\begin{equation}
( \rho_1, \overline{\rho_1}) = c (\rho_2, \overline{\rho_2}) 
\label{eq:3.5}
\end{equation}
for some constant $c$. 
\label{defn:gcym}
\end{defn}
 
\begin{ex}
Let $M$ be a usual Calabi-Yau manifold. We have the pure spinors
\begin{equation}
\rho_1 = \Omega,\qquad  \rho_2 = e^{i\omega}, 
\label{eq:3.6}
\end{equation}
where $\omega$ is the symplectic form and $\Omega$ is the holomorphic volume form. We have
\begin{equation}
(e^{i \omega}, e^{-i \omega}) = (-1)^{m (m-1)/2} (\Omega, \overline{\Omega}),
\label{eq:3.7}
\end{equation}
that is, $c = (-1)^{m (m-1)/2}$ where $m = \mathrm{dim} M$. 
\label{ex:3.2}
\end{ex}

\section{Sheaves of vertex algebras} \label{sec:3}
In this section we recall some results from \cite{gerbes2} and \cite{bressler1}
in the language of SUSY vertex algebras, following \cite{heluani8}.  In this section we do not require the Courant algebroid $E$ to be exact. The construction of the chiral-anti-chiral de Rham complex parallels that of the sheaf of (twisted) differential operators from a Lie algebroid (cf. Prop \ref{prop:universal} below).

Let $(E, \langle ,\rangle , [,], \pi)$ be a Courant algebroid. 
Let  $\Pi E$ be the corresponding purely odd super vector bundle. We will abuse
notation and denote by $\langle, \rangle$ the corresponding
super-skew-symmetric bilinear form, and by $[,]$ the corresponding  
degree $1$ bracket on $\Pi E$. Similarly, we obtain an odd
differential operator $\mathcal{D}: C^\infty(M) \rightarrow 
C^\infty(\Pi E)$. If no confusion should arise, when $v$ is an element of a vector
space $V$, we will denote by the same symbol $v$ the corresponding element of $\Pi
V$, where $\Pi$ is the \emph{parity change operator}.  

The following proposition from from \cite{heluani8} describes the construction of the chiral de Rham
 complex in parallel to the construction of \emph{twisted differential
 operators} given a Lie algebroid: 
 \begin{prop}
	 For each complex Courant algebroid $E$ over a differentiable manifold
	 $M$, there exists a sheaf $U^\mathrm{ch}(E)$ of SUSY vertex algebras on $M$
	 generated by functions $i: C(M) \hookrightarrow
	 U^\mathrm{ch}(E)$, and sections of $\Pi E$, $j: C( \Pi E)
	 \hookrightarrow U^\mathrm{ch}(E)$ subject to the relations:
	 \begin{enumerate}
		 \item $i$ is an ``embedding of algebras'', i.e. 
			 $i(1) = \vac$, and $i(fg) =
			 i(f) \cdot i(g)$, where in the RHS we use the normally
			 ordered product in $U^\mathrm{ch}(E)$.
		 \item $j$ imposes a compatibility condition between the
			 Dorfman bracket in $E$ and the Lambda bracket in
			 $U^\mathrm{ch}(E)$: \[ [j(A)_\Lambda j(B)] =
			 j ([A,  B]) + 2\chi i (\langle A, B \rangle).\]
		 \item $i$ and $j$ preserve the $\cO$-module structure of
			 $E$, i.e. $j (f A) = i(f)
			 \cdot j(A)$. 
		 \item $\mathcal D$ and $S$ are compatible, i.e. $j
			 \mathcal D f = S i (f)$. 
		 \item We impose the usual commutation relation \[
			 [j(A)_\Lambda i(f)] = i (\pi(A) f).\]
	 \end{enumerate}
	 In the particular case when $E = (T\oplus T^*) \otimes
	 \mathbb{C}$ is the standard
	 Courant algebroid with $H=0$, then $U^\mathrm{ch}(E)$ is the chiral-anti-chiral de Rham
	 complex of $M$ as in \cite{malikov}, denoted by $\Omega^\mathrm{ch}_M$ for historical
	 reasons\footnote{We call this sheaf chiral-anti-chiral as in \cite{frenkelnekrasov} not to confuse it with the \emph{holomorphic} chiral de Rham complex.}.
	 \label{prop:universal}
 \end{prop}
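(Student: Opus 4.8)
The plan is to construct the sheaf $U^{\mathrm{ch}}(E)$ by first building it locally over trivializing opens, where $E$ admits a frame of sections, and then gluing via the fact that the defining relations are intrinsic (coordinate-free). Concretely, I would work in analogy with the construction of the sheaf of twisted differential operators attached to a Lie algebroid, which is precisely the parallel the proposition invokes. The idea is to present a candidate SUSY vertex algebra by generators and relations and then verify that it is consistent (i.e. the relations are compatible with the SUSY vertex algebra axioms) and functorial enough to glue into a sheaf.

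First I would fix an open set $V \subset M$ over which $E|_V$ is a free $\mathcal O(V)$-module, choose a frame, and form the free SUSY vertex algebra generated by the images of $i(f)$ for $f \in C^\infty(V)$ and $j(A)$ for $A \in C^\infty(\Pi E|_V)$. Onto this free object I would impose the five relations (1)--(5) as identities among $\Lambda$-brackets and normally ordered products. The key structural point is that relation (2), $[j(A)_\Lambda j(B)] = j([A,B]) + 2\chi\, i(\langle A,B\rangle)$, together with (5), encodes exactly the Courant algebroid axioms of Definition \ref{defn:1}: the Leibniz identity for $[,]$ translates into the Jacobi-type identity for the $\Lambda$-bracket (the non-commutative Wick formula of Appendix \ref{sec:appendixc}), the anchor compatibility $\pi([A,B])=[\pi(A),\pi(B)]$ matches the associativity of the action on $i(C^\infty(M))$, and the symmetric-part axiom $[A,B]+[B,A]=\mathcal D\langle A,B\rangle$ matches the skew-symmetry axiom of the $\Lambda$-bracket once one uses relation (4) identifying $\mathcal D$ with $S$. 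So the verification that the imposed relations are consistent reduces, term by term, to the Courant axioms.

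Next I would check well-definedness of the local model as a SUSY vertex algebra: that the assignment $A \mapsto j(A)$ extends to honest fields satisfying locality, translation invariance under $S$ and $T$, and the vacuum axioms of Definition \ref{defn:2.3}. Here relation (4), $j\,\mathcal D f = S\, i(f)$, guarantees $S$-equivariance, and relation (3), $j(fA)=i(f)\cdot j(A)$, guarantees that the presentation respects the $\mathcal O$-module structure so that the result is independent of the chosen frame. Finally I would glue: since all five relations are stated invariantly in terms of the Courant data $(\langle,\rangle,[,],\pi,\mathcal D)$, a change of frame over $V \cap V'$ acts by an $\mathcal O$-linear automorphism preserving these relations, giving canonical transition isomorphisms that satisfy the cocycle condition, hence a well-defined sheaf on $M$. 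The concluding identification, that for $E=(T\oplus T^*)\otimes\mathbb C$ with $H=0$ one recovers $\Omega^{\mathrm{ch}}_M$, I would obtain by writing out the local generators in Darboux-type coordinates $\partial_i, dx^i$ and matching relation (2) against the bosonic--fermionic OPEs defining the chiral-anti-chiral de Rham complex of \cite{malikov}.

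The main obstacle I expect is not the algebra of a single open set but the \emph{gluing with nontrivial $H$-flux} and the verification that relation (2) is genuinely compatible with the full Borcherds/Jacobi identity for the $\Lambda$-bracket. The twisting by the closed three-form $H$ enters the Dorfman bracket via the term $i_Y i_X H$, and one must check that the associated transition data form a cocycle valued in the appropriate automorphism sheaf (this is where the gerbe structure of \cite{gerbes2, bressler1} becomes essential); the obstruction to a global splitting is precisely the class of $H$, so the sheaf exists but need not come from a global frame. Equivalently, the delicate step is confirming that the non-commutative Wick formula applied to $[j(A)_\Lambda[j(B)_\Lambda j(C)]]$ reproduces the Courant Leibniz identity with the correct $\chi$-dependent central terms cancelling, since $\langle \mathcal D f,\mathcal D g\rangle=0$ must be invoked to kill the would-be anomaly. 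This is the heart of the construction and the place where, following \cite{heluani8}, the Courant algebroid axioms are used in full strength.
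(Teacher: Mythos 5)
Your overall strategy --- present $U^{\mathrm{ch}}(E)$ locally by generators and relations and glue --- is the same one behind the construction in \cite{heluani8} (the paper itself offers no proof of Proposition \ref{prop:universal}; it quotes the result from there), but the difficulty you single out as the heart of the matter is not where the actual difficulty lies, and the step that is the heart of the matter is missing. On the first point: since the sheaf is generated by \emph{all} functions and \emph{all} sections of $\Pi E$, with the five relations written invariantly in terms of the given data $(\langle\cdot,\cdot\rangle,[\cdot,\cdot],\pi,\mathcal D)$, no choice of frame enters the definition; the assignment $V \mapsto U^{\mathrm{ch}}(E|_V)$ carries canonical restriction maps and there is no transition cocycle to verify. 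The three-form $H$ is already absorbed into the Dorfman bracket of the given exact Courant algebroid $E$, so there is no separate ``gluing with $H$-flux'' issue and no gerbe here: the gerbe of \cite{gerbes2,bressler1} appears in the different problem of classifying such sheaves when no Courant algebroid (equivalently, no splitting) is fixed beforehand. By contrast, the consistency of relations (2), (4), (5) with skew-symmetry and the Jacobi identity is exactly the statement that $\mathcal H \otimes \bigl( C^\infty(M)\oplus C^\infty(\Pi E) \bigr)$ with this bracket is an $N_K=1$ SUSY Lie conformal algebra, and your dictionary (Leibniz $\leftrightarrow$ Jacobi, symmetric part of the Dorfman bracket $\leftrightarrow$ skew-symmetry via $j\mathcal D f = S i(f)$, axiom (5) of Definition \ref{defn:1} $\leftrightarrow$ the $\chi$-linear terms) is the right one; that part of your proposal is sound, and one can then invoke the universal enveloping SUSY vertex algebra of \cite{heluani3}.

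The genuine gap is the quotient by the normally ordered product relations (1) and (3). These are not Lie-conformal-algebra relations: they must be imposed as a vertex algebra ideal, and ideals of (SUSY) vertex algebras are closed under all $(j|J)$-th products with arbitrary elements, so such quotients can collapse; nothing in your argument rules out $U^{\mathrm{ch}}(E)=0$, nor shows that $i$ and $j$ stay injective so that the resulting sheaf really contains $C^\infty(M)$ and $E$. This is what the PBW-type filtration argument of \cite{gerbes2,bressler1,heluani8} supplies: one filters the quotient so that the associated graded is identified with a free supercommutative object generated by $C^\infty(M)$ and $\Pi E$, which at once gives injectivity of $i$ and $j$, computes local sections (hence the sheaf property), and, in the split flat case $E=(T\oplus T^*)\otimes\mathbb{C}$ with $H=0$, matches the local description of $\Omega^{\mathrm{ch}}_M$ in \cite{malikov} --- your Darboux-coordinate comparison only becomes meaningful after this step. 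Relatedly, because the normally ordered product is only quasi-associative and quasi-commutative (Appendix \ref{sec:appendixc}), one must check that relations (1) and (3) are mutually compatible, e.g. that $i((fg)h)$ and $i(f(gh))$ acquire the same quasi-associativity corrections; in the purely bosonic setting of chiral differential operators this is precisely where the anomaly obstructing existence appears, and here it cancels only because of the odd generators. That cancellation is a computation, not a formal consequence of the Courant axioms, and it is absent from your proposal.
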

Using this proposition, we will abuse notation and use the same symbols for
sections of $E\otimes \mathbb{C}$ when they are viewed as sections of
$U^\mathrm{ch}(E)$.

\section{Bihermitian setup} \label{sec:4}

Following \cite{gualtieri1}, we discuss the bihermitian description of generalized K\"ahler geometry which 
we are going to use extensively later on. Let $(M, \mathcal{J}_1, \mathcal{J}_2)$ be a generalized K\"ahler manifold as in Definition \ref{defn:gk}.

The projection $\pi:E \cong T \oplus T^* \rightarrow T$ induces isomorphisms $\pi^\pm: C_\pm \xrightarrow{\sim} T$. We use these isomorphisms to transport structures from $C_\pm$ to $T$.  Restricting the natural symmetric and skew-symmetric pairings  on $T\oplus T^*$ to $C_\pm$ we obtain Riemannian metrics and two forms on both of $C_\pm$. We can transport these via $\pi^\pm$ to $T$ obtaining $b \pm g$, where $b$ is a two form, and $g$ is a Riemannian metric. Since $C_\pm$ are stable under both $\mathcal{J}_1$ and $\mathcal{J}_2$, we obtain complex structures on both of them which are compatible with the inner product. Projecting $\mathcal{J}_1$ with $\pi^\pm$ we obtain two Hermitian almost complex structures $J_\pm$ on $T$. Since $\mathcal{J}_1 = \pm \mathcal{J}_2$ in $C_\pm$ we would obtain the same data projecting $\mathcal{J}_2$. Finally let $\omega_\pm = g J_\pm$. 

We have constructed the data $(g, b, J_+, J_-)$ from $(\mathcal{J}_1, \mathcal{J}_2)$. It is easy to show that the latter can be recovered from the former as
\begin{equation}
\mathcal{J}_{1,2} = \frac{1}{2} \begin{pmatrix} 1 & 0 \\ b & 1 \end{pmatrix} \begin{pmatrix} J_+ \pm J_- & - \left( \omega_+^{-1} \mp \omega^{-1}_- \right) \\ \omega_+ \mp \omega_- & - \left( J^*_+ \pm J^*_- \right) \end{pmatrix} \begin{pmatrix} 1 & 0 \\ -b & 1 \end{pmatrix}.
\label{eq:b4.1}
\end{equation}

The projection $\pi$ identifies
\begin{equation}
\pi : L_1^\pm \xrightarrow{\sim} T^{1,0}_\pm~,
\label{eq:b4.2}
\end{equation}
Indeed writing explicitly $\pi$ we get
\begin{equation}
L_1^\pm = \Bigl\{ X + (b \mp i \omega_\pm) X | X \in C^\infty (T^{1,0}_\pm) \Bigr\}~.
\label{eq:b4.3}
\end{equation}
We can now write the integrability conditions for a generalized K\"ahler structure in terms of the bihermitian data. Here we review the relevant 
 results. 
\begin{prop}[{\cite{gualtieri1}}] The complex structures $J_\pm$ coming from a generalized K\"ahler structure are integrable and the forms $\omega_\pm$, $b$ and $H$ satisfy
\begin{equation}
 d^c_\pm \omega_\pm  = \pm (db + H)
\label{eq:b4.4}
\end{equation}
where $d^c_\pm = i (\overline{\partial}_\pm - \partial_\pm)$ and $\partial_\pm$ is the $\partial$ operator for the complex structure $J_\pm$.
\label{prop:b4.1}
\end{prop}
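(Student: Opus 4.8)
The plan is to read off both assertions directly from the Courant involutivity of the eigenbundles $L_1^\pm$, which is guaranteed by the integrability of $\mathcal{J}_1$ and $\mathcal{J}_2$ (see the discussion following \eqref{eq:3.3}). First I would establish integrability of $J_\pm$. By \eqref{eq:b4.2} the anchor restricts to an isomorphism $\pi: L_1^\pm \xrightarrow{\sim} T^{1,0}_\pm$, and by axiom (1) of Definition \ref{defn:1} the anchor intertwines the Dorfman bracket on $E$ with the Lie bracket of vector fields. Hence involutivity of $L_1^\pm$ descends to involutivity of $T^{1,0}_\pm$ under $[\,,\,]$, which is precisely the Newlander--Nirenberg integrability condition for $J_\pm$.

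For the form identity I would compute the Dorfman bracket on the explicit generators provided by \eqref{eq:b4.3}, working in each eigenbundle $L_1^\pm$ separately. Write $\beta_\pm = b \mp i\omega_\pm$, viewed as a bundle map $T \to T^*$, and take $A = X + i_X\beta_\pm$, $B = Y + i_Y\beta_\pm$ with $X, Y \in C^\infty(T^{1,0}_\pm)$. Applying the Dorfman bracket of Example \ref{ex:courant1} together with the Cartan identities $\lie_X = d\, i_X + i_X d$ and $\lie_X i_Y - i_Y \lie_X = i_{[X,Y]}$, the vector part of $[A,B]$ is $[X,Y]$, while its one-form part collapses to $i_{[X,Y]}\beta_\pm + i_Y i_X(d\beta_\pm + H)$. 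Since $[A,B]$ must again lie in $L_1^\pm$, its one-form part is forced to equal $i_{[X,Y]}\beta_\pm$, so the residual constraint is $i_Y i_X(d\beta_\pm + H) = 0$ for all $X, Y \in C^\infty(T^{1,0}_\pm)$.

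It remains to decode this constraint by type with respect to $J_\pm$. Contracting a three-form with two $(1,0)$-vectors annihilates exactly its $(1,2)$- and $(0,3)$-parts, so $i_Y i_X \Theta = 0$ on $T^{1,0}_\pm$ is equivalent to the vanishing of $\Theta^{(3,0)}$ and $\Theta^{(2,1)}$. Applying this to $\Theta = (db + H) \mp i\, d\omega_\pm$ and using that $\omega_\pm$ is of type $(1,1)$, so that $d\omega_\pm$ has no $(3,0)$-component, yields $(db+H)^{(3,0)} = 0$ and $(db+H)^{(2,1)} = \pm i\,\partial_\pm\omega_\pm$ (up to the overall sign conventions). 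Because $db + H$ is real, its $(1,2)$-part is the conjugate of the $(2,1)$-part and the $(0,3)$-part vanishes; reassembling the three types recovers the real combination of $\partial_\pm\omega_\pm$ and $\overline{\partial}_\pm\omega_\pm$ appearing in $\pm d^c_\pm\omega_\pm$, which is exactly \eqref{eq:b4.4}.

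I expect the main obstacle to be precisely this last step: keeping the type decomposition and the reality condition mutually consistent so that the $(2,1)$-matching reproduces the stated sign, and verifying that the conventions in \eqref{eq:b4.3}, in the $H$-term of the Dorfman bracket of Example \ref{ex:courant1}, and in the definition $d^c_\pm = i(\overline{\partial}_\pm - \partial_\pm)$ are all compatible. The computation also closes the loop with the first paragraph, since the expression $i_{[X,Y]}\beta_\pm$ is only meaningful once $[X,Y] \in C^\infty(T^{1,0}_\pm)$, i.e. once integrability of $J_\pm$ has been secured.
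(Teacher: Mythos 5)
The paper offers no proof of this proposition at all: it is quoted directly from \cite{gualtieri1}, so the only meaningful comparison is with the standard argument, which your proposal correctly reconstructs. Your skeleton is sound: integrability of $J_\pm$ does follow from Courant involutivity of $L_1^\pm$ because the anchor intertwines the Dorfman and Lie brackets (axiom (1) of Definition \ref{defn:1}) and restricts to the isomorphism \eqref{eq:b4.2}; and your bracket computation is right, since for $A = X + i_X\beta_\pm$, $B = Y + i_Y\beta_\pm$ with $\beta_\pm = b \mp i\omega_\pm$ the Cartan identities give
\begin{equation*}
[A,B] = [X,Y] + i_{[X,Y]}\beta_\pm + i_Y i_X\bigl( d\beta_\pm + H \bigr),
\end{equation*}
so involutivity is exactly $i_Y i_X\bigl(db + H \mp i\, d\omega_\pm\bigr) = 0$, i.e.\ the vanishing of the $(3,0)$ and $(2,1)$ parts of $db + H \mp i\, d\omega_\pm$ with respect to $J_\pm$.

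The genuine problem is the final step, precisely the one you flagged as the danger point and then asserted rather than carried out. Your own intermediate identity $(db+H)^{(2,1)} = \pm i\,\partial_\pm\omega_\pm$, combined with reality of $db+H$ and the vanishing of its $(3,0)$ and $(0,3)$ parts, reassembles to
\begin{equation*}
db + H = \pm i\,\partial_\pm\omega_\pm \mp i\,\overline{\partial}_\pm\omega_\pm = \mp i\bigl( \overline{\partial}_\pm - \partial_\pm \bigr)\omega_\pm = \mp d^c_\pm\omega_\pm,
\end{equation*}
that is $d^c_\pm\omega_\pm = \mp(db+H)$, the \emph{opposite} sign to \eqref{eq:b4.4}. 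This cannot be dismissed as ``overall sign conventions'': every convention entering your computation --- the frame \eqref{eq:b4.3}, the $i_Y i_X H$ term in Example \ref{ex:courant1}, $d^c_\pm = i(\overline{\partial}_\pm - \partial_\pm)$, and $\omega_\pm = g J_\pm$ --- is fixed by the paper, and together they determine the sign. Nor does the paper's own internal sign ambiguity rescue it: replacing $\mathcal{J}_1$ by $-\mathcal{J}_1$ (the discrepancy between \eqref{eq:3.1} and \eqref{eq:b4.1}, tacitly corrected in Section \ref{sec:7}) merely conjugates $L_1^\pm$, swapping the roles of $(2,1)$ and $(1,2)$ parts, and the real conclusion $d^c_\pm\omega_\pm = \mp(db+H)$ is unchanged. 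So either you must exhibit a specific compensating convention (there is none among those the paper states), or you should conclude that \eqref{eq:b4.4} as printed carries a sign slip relative to the paper's section-4 conventions. Your proof becomes correct and complete once its conclusion is stated honestly as $d^c_\pm\omega_\pm = \mp(db+H)$, with a remark to this effect; as written, the last sentence claims a match that your own computation contradicts.
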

\begin{prop}[{\cite{gualtieri1}}] Let $(g,b, J_\pm)$ be the bihermitian data obtained from a generalized K\"ahler manifold. Define two connections 
\begin{equation}
\nabla^\pm = \nabla \pm \frac{1}{2} g^{-1}(db + H) ~,
\label{eq:b4.5}
\end{equation}
where $\nabla$ is the Levi-Civita connection for $g$. We obtain $\nabla^\pm J_\pm = 0$ and $(db+H)$ is of type $(2,1) + (1,2)$ with respect to both $J_\pm$. 
\label{prop:4.2}
\end{prop}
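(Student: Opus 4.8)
The plan is to split the statement into two independent claims---that $db+H$ has the stated type, and that each $\nabla^\pm$ preserves the corresponding complex structure---and to treat the first as a direct reformulation of Proposition \ref{prop:b4.1}, while for the second I would identify $\nabla^\pm$ with the Bismut (Hermitian, skew-torsion) connection of $(M,J_\pm,g)$.

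For the type condition I would start from the integrability relation \eqref{eq:b4.4}, namely $d^c_\pm \omega_\pm = \pm(db+H)$, which yields the two presentations $db+H = d^c_+\omega_+ = -\,d^c_-\omega_-$. Since $g$ is Hermitian with respect to $J_\pm$, the fundamental form $\omega_\pm = gJ_\pm$ is of type $(1,1)$ with respect to $J_\pm$; because $J_\pm$ is integrable (again Proposition \ref{prop:b4.1}) one has $d = \partial_\pm + \overline{\partial}_\pm$ on forms, so that $d^c_\pm \omega_\pm = i(\overline{\partial}_\pm - \partial_\pm)\omega_\pm$ is of type $(2,1)+(1,2)$ with respect to $J_\pm$. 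The first presentation then shows $db+H$ is of type $(2,1)+(1,2)$ with respect to $J_+$, and the second shows the same with respect to $J_-$.

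For the connections I would first record that $\nabla^\pm$ is metric with totally skew-symmetric torsion. Writing $S = db + H$ for the globally defined closed three-form and using the convention that $g^{-1}S$ raises the last index, metric compatibility $\nabla^\pm g = 0$ follows because $S$ is antisymmetric in its last two arguments, while torsion-freeness of the Levi-Civita connection $\nabla$ gives $g\bigl(T^{\nabla^\pm}(X,Y),Z\bigr) = \pm S(X,Y,Z)$, which is totally antisymmetric. The key input is then the characterization of the Bismut connection: on a Hermitian manifold $(M,J_\pm,g)$ there is a \emph{unique} metric connection preserving $J_\pm$ whose torsion is totally skew, and its torsion three-form equals $d^c_\pm \omega_\pm$ (with the sign convention of \eqref{eq:b4.4}). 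By Proposition \ref{prop:b4.1} the torsion of $\nabla^\pm$ is precisely $\pm(db+H) = d^c_\pm \omega_\pm$. Since a metric connection is determined by its torsion, $\nabla^\pm$ coincides with this Bismut connection, whence $\nabla^\pm J_\pm = 0$.

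The main obstacle is the bookkeeping of sign and index conventions: matching the definition of $d^c_\pm$, the definition of the Bismut torsion three-form (whether it is $d^c\omega$ or $-d^c\omega$, and whether one contracts $g(T(X,Y),Z)$ or its negative), and the index-raising convention in \eqref{eq:b4.5}, so that the two torsion three-forms agree on the nose rather than up to a sign. A self-contained alternative that avoids invoking Bismut's theorem would be to compute $(\nabla^\pm_X J_\pm)Y$ directly, expressing the Levi-Civita term $(\nabla_X J_\pm)Y$ through $d\omega_\pm$ and the (vanishing) Nijenhuis tensor of the integrable $J_\pm$, and then checking that the skew-torsion correction $\pm\tfrac12 g^{-1}(db+H)$ cancels it via \eqref{eq:b4.4}; this route is routine but convention-heavy.
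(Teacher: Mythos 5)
The paper does not actually prove this proposition---it is recalled verbatim from \cite{gualtieri1}---so there is no internal proof to compare against; your argument is correct and is essentially the standard one underlying the cited result. Deriving the type statement by applying $d^c_\pm$ to the $(1,1)$-form $\omega_\pm$ and invoking Proposition \ref{prop:b4.1}, then identifying $\nabla^\pm$ with the Bismut connection of $(M,J_\pm,g)$ via the fact that a metric connection is uniquely determined by its (here totally skew) torsion, is sound, and only the existence half of Bismut's theorem with its torsion formula is really needed; the sign-convention matching you flag is indeed the single delicate point, and the conventions of \eqref{eq:b4.4} and \eqref{eq:b4.5} are mutually consistent, so the identification goes through.
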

\begin{rem}
  As far as the bihermitian picture is concerned the only data we use is the combination $db+H$ which gives rise to a closed 3 form.
   In view of this statement, we may replace $H$ by $H + db$ in the definition of the Courant algebroid, hence we may assume that $b = 0$ in all formulas. 
   Thus without loss of any generality we can set $b$ to zero in all above propositions and use only $H$.  This is what we do in the rest of 
    paper. 
\end{rem}
\begin{rem}
The generalized complex structures ${\mathcal J}_{1,2}$ give rise to the following Poisson tensors 
\begin{equation}
 P_{1,2} = - \omega_+^{-1} \pm \omega_-^{-1}~.
\label{definitionPoisson}
\end{equation}
\end{rem}

Below we will need some properties of the Bismut connection $\nabla^\pm$. For each complex structure $J_\pm$ we choose a holomorphic system of coordinates $\{z^\alpha_\pm\}$. We will use Greek subindexes when using these coordinate systems while 
we use Latin subindexes for a general coordinate system. Since the Hermitian complex structures are covariantly constant
\begin{equation}
\nabla^\pm J_\pm = 0  \Rightarrow {{J_\pm}^i_j}_{,k}= \Gamma^{\pm l}_{kj} {J_\pm}^i_{l} - \Gamma^{\pm i}_{kl} {J_\pm}^l_{j}~,
\label{eq:6.9}
\end{equation}
where 
\begin{equation}
  \Gamma^{\pm l}_{kj} =  \Gamma^{\pm l}_{kj} \pm g^{ls} H_{skj}
\label{eq:6.933uu3}
\end{equation}
 where $  \Gamma^{\pm l}_{kj}$ are the Christoffel symbols of the Levi-Civita connection for $g$. 
In the coordinate system $\{z_\pm^\alpha\}$ these imply
\begin{equation}
\Gamma^{\pm \alpha}_{i \bar \beta} = \Gamma^{\pm \bar \alpha}_{i \beta} = 0 \qquad \forall \alpha, \beta, i,
\label{eq:6.10}
\end{equation}
and from these we infer:
\begin{equation}
\Gamma^{\pm \alpha}_{\bar \beta \alpha} = \pm H_{\bar \beta \alpha \bar \gamma} g^{\alpha \bar \gamma}, \qquad \Gamma^{\pm \bar \alpha}_{\beta \bar \alpha} = \pm H_{\beta \bar \alpha \gamma} g^{\gamma \bar \alpha}~.
\label{eq:6.11}
\end{equation}
We also define the following one forms
\begin{multline}
v^\pm_i = J^j_{\pm i} \nabla_k J^k_{\pm j}= J^j_{\pm i} \nabla_k^\pm J_{\pm j}^k \mp \frac{1}{2} J^j_{\pm i} H_{klm}g^{km} J^l_{\pm j} \pm \frac{1}{2}  J^j_{\pm i} H_{kjm} g^{ml} J_{\pm l}^k \\ = \pm \frac{1}{2} J^j_{\pm i} J^k_{\pm l} g^{ml} H_{kjm}~,
\label{eq:6.11nose}
\end{multline}
which in the holomorphic coordinate system $\{z^\alpha_\pm\}$ looks like
\begin{equation}
2 v^\pm_\alpha = \mp H_{\beta \alpha \bar \gamma} g^{\beta \bar \gamma} \pm H_{\bar \beta \alpha \gamma} g^{\gamma \bar \beta} = \mp 2 H_{\beta \alpha \bar \gamma} g^{\beta \bar \gamma} = - 2 \Gamma_{\alpha \bar \beta}^{\pm \bar \beta}~. 
\label{eq:6.11dilaton1}
\end{equation}
Similarly we obtain 
\begin{equation}
2 v^\pm_{\bar \alpha} = \mp H_{\bar \beta \bar \alpha \gamma} g^{\gamma \bar \beta} \pm H_{\beta \bar \alpha \bar \gamma} g^{\beta \bar \gamma} = - 2 \Gamma^{\pm \beta}_{\bar \alpha \beta}~.
\label{eq:6.11dilaton2}
\end{equation}
 It is convenient to introduce local frames on $E$ adapted to the decomposition \eqref{eq:3.3}. 
According to \eqref{eq:b4.3} we can choose local frames for $L_1^\pm$ given by
\begin{equation}
e_\alpha^\pm = \frac{\partial}{\partial z_\alpha^\pm} \pm  g_{\alpha \bar \beta} d z^\pm_{\bar\beta}
\label{eq:b4.7}
\end{equation}
with dual frames on $\overline{L_1^\pm}$
\begin{equation}
e^\alpha_{\pm} = \pm g^{\alpha \bar\beta} e^\pm_{\bar \beta} := \pm g^{\alpha \bar\beta} \left( \frac{\partial}{\partial z^\pm_{\bar \beta}} \pm g_{\gamma \bar{\beta}} dz^\pm_\gamma \right) = \left( dz^\pm_\alpha \pm g^{\alpha \bar \beta}  \frac{\partial}{\partial z^\pm_{\bar \beta}} \right)~.
\label{eq:b4.8}
\end{equation}
With respect to complex conjugation we have the following properties
\begin{equation}
    e^\pm_{\bar \alpha} = \overline{e^\pm_\alpha} = \pm g_{\bar{\alpha}\beta} e^\beta_\pm~,
   \label{complexconj}
   \end{equation}
  where these  expressions are written in the holomorphic coordinate system for $J_\pm$ correspondingly.  One can easily calculate some of the  Dorfman 
    brackets
    \begin{equation}
     [e^\pm_\alpha, e^\pm_\beta]=0~, ~~~~~~~   [e_\pm^\alpha, e_\pm^\beta]=0~, 
    \label{dorfgai}
    \end{equation} 
     while other brackets are non-trivial. 

\section{Unimodularity as twisted Ricci-flatness} \label{sec:5}

In this section we prove some useful identities about the divergences of the local frame elements $\{e_\alpha^\pm\}$ and the corresponding statements in terms of the bihermitian data. First we recall some basic notions from the theory of Lie algebroids and generalized complex geometry.

 Recall that given any Lie
algebroid $L$ over $M$, we can define a differential $d_L: C^{\infty}(M)
\rightarrow C^{\infty}(L^*)$ as $(d_L f)(l) = \pi_L (l) f$, where $l$ is
a section of $L$ and $\pi_L$ is the anchor map of $L$. This differential
can be extended to $\bigwedge^\bullet L^*$ by imposing the Leibniz rule
in the usual way (for $\zeta \in C^{\infty} (\bigwedge^{k-1} L^*)$):
\begin{multline}
	(d_L \zeta)(l_1, \dots, l_{k}) = \sum_i (-1)^{i+1} \pi(l_i)
	\zeta(l_1, \dots, \hat{l_i}, \dots l_k) + \\\sum_{i<j} (-1)^{i+j}
	\zeta([l_i, l_j], \dots, \hat{l_i}, \dots, \hat{l_j}, \dots,
	l_k).
	\label{eq:diffeerentialmult}
\end{multline}
The cohomologies of the complex $(\bigwedge^\bullet L^*, d_L)$ are
denoted by $H^\bullet (L)$ and are called the Lie algebroid cohomologies
of $L$ (with trivial coefficients).

Now let $(M, \mathcal{J})$ be a generalized complex manifold with trivial $U_{\mathcal{J}}$. Given a non-vanishing global
section of $U_\mathcal{J}$, we obtain an isomorphism of sheaves:
\begin{equation}
	\wedge^k \overline{L} \simeq U_{k-n}~,
	\label{eq:isomalgebroid}
\end{equation}
where $U_{k-n}$ where defined in \eqref{eq:ukn}. 
The twisted  de Rham differential can be
split as $d_H = \partial + \bar\partial$ such that $\partial: U_k
\rightarrow U_{k-1}$ and $\bar\partial :U_k \rightarrow U_{k+1}$. Suppose moreover that $M$ is generalized Calabi-Yau, in this case the isomorphism (\ref{eq:isomalgebroid}) allows us to
identify the complex $(U_\bullet, \bar{\partial})$ with the complex
computing the Lie algebroid cohomology of $L$
(using $\bar{L} = L^*$).
Moreover, in this case, the Lie algebroids $L$ and $L^*$ are both
\emph{unimodular}, a notion due to Weinstein \cite{weinstein2} that we now recall. 

For a Lie algebroid $L$ we have the corresponding sheaf of twisted
differential operators $U(L)$. The sheaf $\det T^*$ is
always a right twisted D-module, and the corresponding left $U(L)$-module
is then the line bundle $Q_L = \det L \otimes
\det T^*$. Suppose for simplicity that the line bundle $Q_L$ is trivial,
for each non-vanishing section $s$ of $Q_L$ we can define $\theta_s \in
C^{\infty} (L^*)$ by \[ \theta_s(l) s = l \cdot s, \] where we use the left
D-module structure of $Q_L$ on the RHS.  It turns out that $\theta$ gives
rise to a well 
defined element of $H^1 (L, Q_L)$, the first Lie algebroid
cohomology of $L$ with coefficients in $Q_L$ (see \cite{weinstein1} for
details).
\begin{defn}[\cite{weinstein2}]
	A Lie algebroid $L$ is called \emph{unimodular} if the class
	$\theta \in H^1(L, Q_L)$ above constructed vanishes.
	\label{defn:unimodularity}
\end{defn}
We have the following Proposition (see for example \cite[Theorem
10]{zabzine4})
\begin{prop}
	A generalized complex manifold $M$ is generalized Calabi-Yau if
	and only if $U_\mathcal{J}$ is trivial and $L$ is unimodular.
	\label{prop:unimodularity}
\end{prop}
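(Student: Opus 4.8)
The plan is to rephrase the generalized Calabi-Yau condition as the vanishing of a cohomology class in $H^1(L)$ built from the canonical spinor, and then to identify that class with the modular class $\theta$ of Definition \ref{defn:unimodularity}.

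First I would observe that, by Definition \ref{defn:gcy}, $M$ is generalized Calabi-Yau precisely when $U_\mathcal{J}$ admits a nowhere-vanishing global section $\rho$ with $d_H\rho = 0$. Splitting $d_H = \partial + \bar\partial$ with $\partial:U_k\to U_{k-1}$ and $\bar\partial:U_k\to U_{k+1}$, and noting that $\rho\in C^\infty(U_{-n})$ forces $\partial\rho\in C^\infty(U_{-n-1})=0$, the closure condition reduces to $\bar\partial\rho=0$. Hence the mere existence of a nowhere-vanishing section of $U_\mathcal{J}$ is exactly the triviality of $U_\mathcal{J}$, while the generalized Calabi-Yau condition adds the requirement that some such section be $\bar\partial$-closed.

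Next, assuming $U_\mathcal{J}$ trivial, I would fix an arbitrary nowhere-vanishing $\rho_0\in C^\infty(U_\mathcal{J})$ and write $\bar\partial\rho_0 = \alpha\cdot\rho_0$ with $\alpha\in C^\infty(\overline{L})=C^\infty(L^*)$, using that $\bar\partial$ raises the grading by one together with the isomorphism \eqref{eq:isomalgebroid}. Under that isomorphism $\bar\partial$ corresponds to the perturbed Lie algebroid differential $d_L + \alpha\wedge$, so $\bar\partial^2=0$ (a consequence of $d_H^2=0$) becomes $d_L\alpha=0$; thus $\alpha$ is a $1$-cocycle defining a class $[\alpha]\in H^1(L)$. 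Replacing $\rho_0$ by $h\rho_0$ with $h$ a nowhere-vanishing function changes $\alpha$ to $\alpha + d_L\log h$, so $[\alpha]$ is independent of the trivialization, and since $\bar\partial(h\rho_0)=(d_L\log h + \alpha)h\cdot\rho_0$, a $\bar\partial$-closed nowhere-vanishing section exists if and only if $[\alpha]=0$. This already yields the equivalence of the generalized Calabi-Yau condition with the triviality of $U_\mathcal{J}$ together with $[\alpha]=0$.

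It remains to identify $[\alpha]$ with the modular class $\theta$, and this is where I expect the main difficulty. I would first use the Mukai pairing to produce the canonical isomorphism $Q_L = \det L\otimes\det T^*\cong U_\mathcal{J}^{\otimes 2}$: combining $\det\overline{L}\otimes U_\mathcal{J}\cong U_n$ (the top case of \eqref{eq:isomalgebroid}) with the nondegeneracy of $U_n\otimes U_{-n}\to\det T^*$ gives $\det\overline{L}\cong U_\mathcal{J}^{-2}\otimes\det T^*$, whence $Q_L\cong U_\mathcal{J}^{\otimes 2}$. The hard part is then to check that under this isomorphism the natural left $U(L)$-module structure on $Q_L$ matches the square of the canonical $L$-action on $U_\mathcal{J}$ encoded by $\bar\partial$, so that the trivialization $\rho_0^{\otimes 2}$ of $Q_L$ has modular cocycle $\theta_{\rho_0^{\otimes 2}}=2\alpha$; since a nonzero scalar does not affect vanishing, $[\alpha]=0$ if and only if $\theta=0$, i.e. $L$ is unimodular. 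Concretely, the obstacle is verifying that the spinorial $L$-action implicit in $\bar\partial$ agrees, through the Mukai isomorphism, with the adjoint-times-divergence action defining $\theta$, while correctly tracking both the factor of $2$ and the effect of the $H$-twist on the anchor and on $d_L$.
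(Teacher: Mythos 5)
Your first half is correct and is genuine progress: the splitting $d_H = \partial + \bar\partial$, the observation that $\partial\rho$ vanishes for degree reasons, the equation $\bar\partial\rho_0 = \alpha\cdot\rho_0$ (your $\alpha$ is precisely the paper's $\chi$ of \eqref{eq:d.0.1}), the cocycle condition $d_L\alpha = 0$, and the computation that rescaling $\rho_0$ by $h$ shifts $\alpha$ by $d_L\log h$, together show that the generalized Calabi-Yau condition is equivalent to triviality of $U_{\mathcal{J}}$ plus vanishing of the class $[\alpha]\in H^1(L)$. This part is complete and matches the paper's point of view.

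The genuine gap is in your second half: the identification of $[\alpha]$ with the modular class is exactly where the content of the proposition sits, and you do not prove it --- you name it as ``the hard part'' and stop. The isomorphism $Q_L = \det L\otimes\det T^*\cong U_{\mathcal{J}}^{\otimes 2}$ you build from the Mukai pairing is the right object, but the assertion that it intertwines the left $U(L)$-module structure on $Q_L$ with the square of the spinorial action encoded by $\bar\partial$ is precisely the nontrivial claim; as written, your argument only shows that two a priori unrelated cocycles are both manufactured from $\rho_0$, not that they coincide (let alone with the correct factor of $2$). The paper settles exactly this point in Proposition \ref{prop:ahad} by a direct computation: taking $s = \bar\zeta\otimes\mu$ with $\mu = (\rho,\bar\rho)$, it evaluates $\dive_\mu(\pi e_i)$ by contracting inside the Mukai pairing, using $e_i\cdot\rho = 0$, the equation $d_H\rho = \chi\cdot\rho$, and the derived-bracket description of the Dorfman bracket $[e_i,\zeta] = [[d_H,e_i],\zeta]$, arriving at the on-the-nose identity $\theta_s(e_i) = 2\chi(e_i)$ (equations \eqref{eq:d.2}--\eqref{eq:d.11}); comparing with the structure functions $c_{ij}^j$ in \eqref{eq:d.9} and \eqref{eq:d.11} then gives the proposition. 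Some computation of this kind (or an explicit appeal to the corresponding module isomorphism in the literature, e.g. \cite{gualtieri2}) is unavoidable: without it your proposal reduces the proposition to an unproven compatibility statement of essentially the same depth as the proposition itself.
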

In fact this can be refined as follows. Let $(M, \mathcal{J})$ be a generalized complex manifold with topologically trivial canonical bundle. Let $\rho$ be a non-vanishing section of $U_{\mathcal{J}}$. Integrability of $\mathcal{J}$ implies that there exists a unique $\chi \in C^\infty(L^*)$ such that
\begin{equation}
d_H \rho = \chi \cdot \rho.
\label{eq:d.0.1}
\end{equation}
There also exists a unique section $\zeta \in C^\infty(\det L^*)$ such that 
\begin{equation}
\bar{\rho} = \zeta \cdot \rho,  
\label{eq:alamiercoles}
\end{equation}
Recall that the spinor $\rho$ gives rise to a volume form 
\begin{equation}
\mu :=  (\rho, \bar{\rho}) = [\rho^\top \wedge \zeta \cdot \rho]_{\mathrm{top}} \in C^\infty (\det T^*)~,
\label{eq:d.0.3}
\end{equation}
 where $(~,~)$ is Mukai pairing. Therefore we can define the section $s$ of $\det L \otimes \det T^*$ by $s = \bar{\zeta} \otimes \mu$. 
We have the following
\begin{prop}
The modular class $\theta_s$ is represented by $2\chi$. 
\label{prop:ahad}
\end{prop}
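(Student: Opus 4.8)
The plan is to unwind the definition $D^L_l s = \theta_s(l)\, s$ of the modular cocycle, where $D^L$ denotes the flat $L$-connection on $Q_L = \det L \otimes \det T^*$ given by the adjoint action on $\det L$ tensored with the Lie-derivative action $\mathcal{L}_{\pi(l)}$ on $\det T^*$, and to recognize this rank-one module as the square of the canonical bundle. First I would record two canonical isomorphisms of line bundles. The Mukai pairing \eqref{eq:d.0.3} is nondegenerate between $U_{-n} = U_{\mathcal{J}}$ and $U_n = \overline{U_{\mathcal{J}}}$, giving $\det T^* \cong U_{\mathcal{J}} \otimes \overline{U_{\mathcal{J}}}$; and the trivialization-free form of \eqref{eq:isomalgebroid}, namely $U_{k-n} \cong \wedge^k \overline{L} \otimes U_{\mathcal{J}}$, evaluated at $k = 2n$ gives $\det L^* \cong \overline{U_{\mathcal{J}}} \otimes U_{\mathcal{J}}^{-1}$, hence $\det L \cong U_{\mathcal{J}} \otimes \overline{U_{\mathcal{J}}}^{-1}$. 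Tensoring these, the two $\overline{U_{\mathcal{J}}}$-factors are mutually inverse and $Q_L \cong U_{\mathcal{J}}^{\otimes 2}$. Tracing generators through the isomorphisms, using $\bar\rho = \zeta \cdot \rho$ from \eqref{eq:alamiercoles} and $\mu = (\rho, \bar\rho)$ from \eqref{eq:d.0.3}, the section $s = \bar\zeta \otimes \mu$ is carried to $\rho \otimes \rho$.

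Next I would pin down the relevant connection. Under \eqref{eq:isomalgebroid} the Lie-algebroid differential $d_L$ becomes the grading-raising part $\bar\partial$ of $d_H$, so $\bar\partial$ restricts on $U_{\mathcal{J}} = U_{-n}$ to a flat $L$-connection $\nabla$ (flat since $\bar\partial^2 = 0$); by \eqref{eq:d.0.1} the equation $d_H \rho = \chi \cdot \rho$ reads $\nabla_l \rho = \chi(l) \rho$, so that $\chi$ is precisely the connection one-form of $\nabla$ in the trivialization $\rho$, and $\chi$ is automatically $d_L$-closed. The whole statement then reduces to proving that, under $Q_L \cong U_{\mathcal{J}}^{\otimes 2}$, the modular connection $D^L$ goes over to $\nabla^{\otimes 2}$; granting this, $\nabla^{\otimes 2}_l(\rho \otimes \rho) = 2\chi(l)\,\rho \otimes \rho$ together with $s \mapsto \rho \otimes \rho$ yields $\theta_s = 2\chi$. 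Since $\chi$ vanishes exactly when $\rho$ can be chosen $d_H$-closed, this refines Proposition \ref{prop:unimodularity}: $L$ is unimodular if and only if $M$ is generalized Calabi--Yau.

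The main obstacle is this last compatibility, and concretely the $\det T^*$ factor. Because the building-block isomorphisms are natural, it suffices to verify that each intertwines the canonical $L$-module structures, after which the $\overline{U_{\mathcal{J}}}$-contributions from $\det L$ and from $\det T^*$ cancel and only $\nabla$ survives, doubled. For the $\det L$ factor this follows formally from $d_L \leftrightarrow \bar\partial$ and Cartan's identity $\mathcal{L}_l = \iota_l d_L + d_L \iota_l$ applied to top elements. For the $\det T^*$ factor one must identify the ordinary divergence $\mathcal{L}_{\pi(l)}\mu$ with the spinorial data; here the key inputs are the $d_H$-invariance of the Mukai pairing, $d(\varphi, \psi) = (d_H \varphi, \psi) \pm (\varphi, d_H \psi)$, the compatibility of $(\,,\,)$ with Clifford multiplication by sections of $E$, and the reality of $d_H$, so that both $d_H \rho = \chi \cdot \rho$ and its conjugate $d_H \bar\rho = \bar\chi \cdot \bar\rho$ enter. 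The one genuinely delicate point is bookkeeping: $\mu$ is a top form, so the invariance identity cannot be applied to $\mu$ directly but only through the sub-top components $\bar\partial \rho \in U_{-n+1}$ and $\partial \bar\rho \in U_{n-1}$, and one must use the vanishing $(U_j, U_k) = 0$ for $j + k \neq 0$ coming from the grading \eqref{eq:ukn} to discard the irrelevant terms and track the surviving signs.
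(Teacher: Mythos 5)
Your structural reductions are correct and attractive: the isomorphisms $\det T^* \cong U_{\mathcal{J}} \otimes \overline{U_{\mathcal{J}}}$ (from nondegeneracy of the Mukai pairing between $U_{-n}$ and $U_n$) and $\det L \cong U_{\mathcal{J}} \otimes \overline{U_{\mathcal{J}}}^{-1}$ (from \eqref{eq:isomalgebroid} in top degree) do give $Q_L \cong U_{\mathcal{J}}^{\otimes 2}$, the section $s = \bar\zeta \otimes \mu$ does go to $\rho \otimes \rho$, and $\bar\partial$ does define a flat $L$-connection on $U_{\mathcal{J}}$ whose connection form in the frame $\rho$ is $\chi$ by \eqref{eq:d.0.1}. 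The paper, by contrast, never introduces $U_{\mathcal{J}}^{\otimes 2}$: it computes $\theta_s(e_i)$ head-on from the Evens--Lu--Weinstein formula, evaluating $-\lie_{\pi e_i}\mu = -d\,\iota_i(\rho,\zeta\cdot\rho)$ through Clifford identities, $d_H\rho = \chi\cdot\rho$, and the derived-bracket description of the Dorfman bracket, arriving at \eqref{eq:d.9} and \eqref{eq:d.11}.

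The gap is that your entire proof hinges on the compatibility "$D^L$ goes over to $\nabla^{\otimes 2}$", which you correctly flag as the main obstacle but do not establish, and the justification you offer for it is not well-posed as stated. You say it suffices to check that each building-block isomorphism "intertwines the canonical $L$-module structures", but the individual tensor factors carry no such structures: the Schouten action of $l$ on $\det L$ and the Lie derivative $\lie_{\pi(l)}$ on $\det T^*$ are not $C^\infty(M)$-linear in $l$ (their anomalies cancel only in the tensor product --- that is the content of the Evens--Lu--Weinstein theorem), and $\overline{U_{\mathcal{J}}} = U_n$ is naturally a module over $\overline{L}$, not over $L$. To make your cancellation argument honest you would have to introduce, on each graded piece $U_k$, the derived-bracket operator $[d_H, l\,\cdot\,]$ for $l \in C^\infty(L)$, show it matches the Schouten action under \eqref{eq:isomalgebroid} and the ordinary Lie derivative under the Mukai-pairing isomorphism, and only then conclude that the $\overline{U_{\mathcal{J}}}$-contributions cancel. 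The second of these matching statements --- relating $\lie_{\pi(l)}(\rho,\bar\rho)$ to Clifford data via $e_i\cdot\rho = 0$, the pairing's behavior under $d_H$, and the grading \eqref{eq:ukn} --- is not a routine naturality check: it is precisely the computation \eqref{eq:d.2}--\eqref{eq:d.9} that constitutes the paper's proof. So as it stands your argument reduces the proposition to an unproven lemma containing all of its technical content; carrying that lemma out would essentially reproduce the paper's calculation, though once done it would yield the pleasant byproduct you note, namely a transparent refinement of Proposition \ref{prop:unimodularity}.
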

\begin{proof}
Recall we have an isomorphism $\overline{L} \simeq L^*$. This induces an isomorphism $\det \overline{L} \simeq \det L^*$. In the basis $\wedge \overline{e_{i}}$ and $\wedge e^i$ this isomorphism is given by multiplication by a function $\alpha$\footnote{We can always make this function equal $1$ but we are interested in the frames \eqref{eq:b4.7} and \eqref{eq:b4.8} adapted to the bihermitian structure, in which case $\alpha = \log \det g$ (cf. Remark \ref{rem:normalization}) .}. It follows from \eqref{eq:alamiercoles} and its complex conjugate that $\zeta \bar{\zeta} = 1$. 
Let $\{e_i\}$, be a local frame for $L$ with dual frame $\{e^i\}$, we can locally write 
\begin{equation}
\zeta = e^{-i\psi} \sqrt{\alpha} \,  e^1 \wedge \dots \wedge e^{\dim M} \in C^\infty (\det L^*).
\label{eq:d.0.2}
\end{equation}
for a real function $\psi$. And we have 
\begin{equation}
\bar \zeta = \frac{e^{i \psi}}{\sqrt{\alpha}}\, e_1 \wedge \dots \wedge e_{\dim M} \in C^\infty (\det L). 
\label{eq:ahalohice}
\end{equation}
Since $s = \bar{\zeta} \otimes \mu = \bar{\zeta} \otimes (\rho, \zeta \cdot \rho)$ we may assume $\alpha = 1$ and $\psi = 0$.
\begin{equation}
\dive_\mu (e_i) \cdot \mu := - \lie_{\pi e_i} \mu = - d \iota_i ( \rho, \zeta \cdot \rho)
\label{eq:d.2}
\end{equation}
where $\iota_i \, \cdot := \pi(e_i) \, \cdot$. 
Let us write $\rho = \sum_p \rho_p$ where $\rho_p \in C^{\infty} (\wedge^p T^*)$. Then we have:
\begin{equation}
\iota_i \left [ \rho^\top \wedge \zeta \cdot \rho \right ] = \sum_p (-1)^{p-1} \left [ \iota_i \rho_p \right ]^\top \wedge \zeta \cdot \rho + \sum_p (-1)^p \rho_p^\top \wedge \iota_i \zeta \cdot \rho, 
\label{eq:d.3}
\end{equation}
Now using that $e_i \cdot \rho = 0$ we have
\begin{equation}
\begin{split}
\iota_i \left [ \rho^\top \wedge \zeta \cdot \rho \right ] &= - \sum_p (-1)^{p-1} \left [\pi^*(e_i) \wedge \rho_p \right ]^\top \wedge \zeta \cdot \rho + \sum_p (-1)^p \rho_p^\top \wedge \iota_i \zeta \cdot \rho, \\
 &= \sum_p (-1)^p \rho_p^\top \wedge \pi^*(e_i) \wedge \zeta \cdot \rho + \sum_p (-1)^p \rho_p^\top \wedge \iota_i \zeta \cdot \rho, \\
 &= \sum_p (-1)^p \rho_p^\top \wedge e_i \cdot \zeta \cdot \rho.
\end{split}
\label{eq:d.4}
\end{equation}
Taking $d_H$ of this we obtain:
\begin{multline}
\dive_\mu (e_i) \mu = - \bigl[(d_H \rho)^\top \wedge e_i \cdot \zeta \cdot \rho \bigr]_{\mathrm{top}} - \bigl[ \rho^\top \wedge d_H e_i \cdot \zeta \cdot \rho \bigr]_{\mathrm{top}} \\ = - \bigl[ (\chi \cdot \rho)^\top \wedge e_i \cdot \zeta \cdot \rho \bigr]_{\mathrm{top}} - \bigl[\rho^\top \wedge d_H e_i \cdot \zeta \cdot \rho \bigr]_{\mathrm{top}} = - ( e_i \cdot \chi \cdot \rho, \zeta \cdot \rho ) - ( \rho, d_H e_i \cdot \zeta \cdot \rho  )\\ = - \chi(e_i) \mu - ( \rho, d_H e_i \cdot \zeta \cdot \rho )  = - \chi (e_i) \mu - ( \rho, [d_H, e_i] \cdot \zeta \cdot \rho ) \\ = - \chi(e_i) \mu - ( \rho, [ [ d_H, e_i], \zeta] \cdot \rho ) - ( \rho, \zeta \cdot e_i \cdot \chi \rho  ) = - 2 \chi(e_i)\mu  - ( \rho, [ [d_H, e_i], \zeta] \cdot \rho ) .
\label{eq:d.6}
\end{multline}
Where for any two elements $a,b$ of the Clifford algebra of $(E, \langle \cdot, \cdot\rangle)$ we write $[a,b] = a\cdot b - (-1)^{p(a)p(b)} b \cdot a$. Using the fact that the Dorfman bracket is defined as a derived bracket \cite{gualtieri2} we obtain:
\begin{equation}
\dive_\mu(e_i) \mu= - 2 \chi(e_i) \mu - ( \rho, [e_i, \zeta]  \cdot \rho ).
\label{eq:d.7}
\end{equation}
Since we have the equation
\begin{equation}
[e_i, e^j] = c_i^{jk} e_k - c_{ik}^j e^k,
\label{eq:d.8}
\end{equation}
for some functions $c_{i}^{jk}$ and $c_{ik}^j$, we obtain 
\begin{equation}
\dive_\mu (e_i) = - 2 \chi (e_i) + c_{ij}^j 
\label{eq:d.9}
\end{equation}
On the other hand, by definition of the modular class $\theta_s$ of the Lie algebroid $L$ we have
\begin{equation}
\theta_s(e_i) = c_{ij}^j - \dive_\mu (e_i).
\label{eq:d.11}
\end{equation}
From where we obtain that the modular class is represented by $2 \chi$. 
\end{proof}
We will also need the following  
\begin{prop}
 On a generalized Calabi-Yau manifold with closed pure spinor $\rho$ and corresponding 
  volume form
 $(\rho, \bar{\rho}) = \mu$,
  the divergence of the corresponding Poisson structure $P$ with respect to $\mu$ vanishes:  $\dive_\mu P=0$. 
\label{prop:poismod}
\end{prop}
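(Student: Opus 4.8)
The plan is to reduce the statement to the local divergence computations already carried out in the proof of Proposition \ref{prop:ahad}. First I would express the Poisson bivector in terms of an adapted local frame for $L$. Writing $\mathcal{J}$ as $i$ times the difference of the projections onto $L$ and $\overline{L}$ along the splitting $E\otimes\mathbb{C}=L\oplus\overline{L}$, and using that the Poisson structure $P$ of the preceding Proposition is given in the standard form $P=\pi\circ\mathcal{J}\circ\pi^*$ (cf. \cite{gualtieri2}), a short computation with the pairing $\langle,\rangle$ gives, for a local frame $\{e_i\}$ of $L$ with dual frame $\{e^i\}$ of $\overline{L}\cong L^*$,
\[ P=-\tfrac{i}{2}\sum_i \pi(e_i)\wedge\pi(e^i). \]
Since $\dive_\mu P=0$ is invariant under rescaling $P$, the precise normalization here is irrelevant.

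Next I would use the generating-operator property of the divergence on multivector fields: for vector fields $X,Y$ one has (up to the usual signs) $\dive_\mu(X\wedge Y)=(\dive_\mu X)\,Y-(\dive_\mu Y)\,X-[X,Y]$, the operator $\dive_\mu$ being a generator of the Schouten bracket. Applying this to each summand and using the anchor--bracket compatibility $[\pi(e_i),\pi(e^i)]=\pi([e_i,e^i])$ together with the structure constants \eqref{eq:d.8}, the divergence $\dive_\mu P$ becomes an explicit combination of the vector fields $\pi(e_i),\pi(e^i)$ whose coefficients involve the scalar divergences $\dive_\mu(\pi e_i)$, $\dive_\mu(\pi e^i)$ and traces of the $c$'s.

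The divergences are then supplied by \eqref{eq:d.9}: since $M$ is generalized Calabi--Yau we have $d_H\rho=0$, hence $\chi=0$ and $\dive_\mu(\pi e_i)=c_{ij}^{\,j}$. Running the same argument for the conjugate data $(\overline{L},-\mathcal{J},\overline{\rho})$ --- which is again generalized Calabi--Yau because $d_H\overline{\rho}=0$, and whose Mukai volume $(\overline{\rho},\rho)$ differs from $\mu$ only by a sign --- yields by Proposition \ref{prop:ahad} the companion formula for $\dive_\mu(\pi e^i)$. The cancellation then rests on two antisymmetry relations extracted from axiom (5) of Definition \ref{defn:1} and the isotropy of $L,\overline{L}$: pairing $[e_i,e^j]$ against $\langle\,,\rangle$ shows that $c_{ij}^{\,k}$ equals the structure constant $f_{ij}^{\,k}$ of $L$ (hence is antisymmetric in $i,j$), and dually that $c_k^{\,ij}$ is antisymmetric in its upper indices. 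With these, the trace terms coming from the brackets cancel exactly against those coming from $\dive_\mu(\pi e_i)$ and $\dive_\mu(\pi e^i)$, and what is left over is precisely a multiple of $\chi(e_i)$ together with its conjugate. Thus $\dive_\mu P$ is a universal expression in $\chi$ and $\overline{\chi}$, both of which vanish by the generalized Calabi--Yau hypothesis, giving $\dive_\mu P=0$.

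The main obstacle lies in the first and third steps: pinning down the frame expression $P=-\tfrac{i}{2}\sum_i\pi(e_i)\wedge\pi(e^i)$ with the correct interpretation of $\pi^*$, and then the index bookkeeping that exhibits $\dive_\mu P$ as a multiple of $\chi$. The antisymmetry identities $c_{ij}^{\,k}=f_{ij}^{\,k}$ and their dual are the technical heart: they are exactly what forces the purely algebroid (trace) contributions to cancel, isolating the curvature-type term governed by $\chi$. Equivalently, this computation shows in general that the modular vector field of $P$ with respect to $\mu=(\rho,\overline{\rho})$ measures the failure of $\rho$ to be $d_H$-closed, and the generalized Calabi--Yau condition is precisely the vanishing of that obstruction.
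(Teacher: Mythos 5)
Your proof is correct, but it takes a genuinely different route from the paper's. The paper's own proof is a three-line global argument: it quotes \cite[Prop.~3.27]{gualtieri2}, which gives $\rho^\top\wedge\bar{\rho}=e^{-\frac{iP}{2}}\mu$ (equation \eqref{ddjj33}), and observes that $d_H$-closedness of $\rho$ makes the left-hand side closed; since the homogeneous components of $e^{-\frac{iP}{2}}\mu$ sit in distinct form degrees, the codimension-two component $-\tfrac{i}{2}\,P\cdot\mu$ is closed by itself, which is precisely $\dive_\mu P=0$. You instead stay entirely inside the Lie-algebroid machinery of Proposition \ref{prop:ahad}: the frame expression for $P$, the generating-operator property of $\dive_\mu$, equation \eqref{eq:d.9} and its conjugate, and antisymmetry of structure constants. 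Your key cancellation does go through: axiom (5) of Definition \ref{defn:1} together with constancy of $\langle e_i,e^j\rangle$ gives $\langle [e_i,e^j],e_k\rangle=-\langle e^j,[e_i,e_k]\rangle$, and involutivity of $L$ (note: integrability, not merely isotropy, is genuinely used here) identifies $c_{ik}^j$ in \eqref{eq:d.8} with the Lie algebroid structure constants of $L$, antisymmetric in the lower indices, and dually for $c_i^{jk}$; the trace terms then cancel and one is left with $\dive_\mu P$ proportional to $\sum_i\bigl(\chi(e_i)\pi(e^i)-\bar{\chi}(e^i)\pi(e_i)\bigr)$, which vanishes under the Calabi-Yau hypothesis. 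One caveat on your ``usual signs'': the convention of Proposition \ref{prop:ahad} is $\dive_\mu(e_i)\mu=-\lie_{\pi e_i}\mu$, which is opposite to the multivector convention $\dive_\mu P\cdot\mu=d(P\cdot\mu)$ of the present proposition; your generator formula with the $-[X,Y]$ term is the one consistent with the former, so your pairing of conventions is coherent, but mixing the two conventions would leave an uncancelled $2\sum_j c_{kj}^j$ instead of zero, so this bookkeeping is not optional. As for what each approach buys: the paper's is shorter and frame-free but leans on an external result of Gualtieri; yours is self-contained, recycles Proposition \ref{prop:ahad}, and actually proves more --- it exhibits the modular vector field of $P$ with respect to $\mu=(\rho,\bar{\rho})$ as an explicit expression in $\chi$ and $\bar{\chi}$ for an arbitrary (not necessarily closed) pure spinor, the proposition being the special case $\chi=\bar{\chi}=0$.
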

\begin{proof}
 This proposition is a simple corollary of \cite[Prop. 3.27]{gualtieri2}. The Poisson structure, pure spinor and volume form are related by 
  \begin{equation}
  \rho^\top \wedge \bar{\rho}  = e^{-\frac{iP}{2}} \mu~.  
 \label{ddjj33}
 \end{equation}
  Since $\rho$ is a pure spinor we have $ d (e^{-\frac{iP}{2}} \mu)=0$ from where the proposition follows. We 
 remind the reader that the divergence of a multivector is defined as follows    \begin{equation}
      \dive_\mu P \cdot \mu =  d (P \cdot \mu)~,
   \label{2292jsjs}
   \end{equation}
    where by $P\cdot \mu$ we understand the contraction of the  multivector $P$ with the form $\mu$. Thus  in local coordinates the divergence of 
  the Poisson structure $P$ can be written as
  \begin{equation}
   (\dive_\mu P )^j =  \frac{1}{\tilde{\mu}} \partial_i (\tilde{\mu}\,P^{ij}) = \partial_i P^{ij} + \partial_i (\log \tilde{\mu})\, P^{ij}~,
  \label{wiiiwiw3}
  \end{equation}
 where the volume form $\mu = \tilde{\mu}~ dx^1 \wedge ... \wedge dx^{2n}$. 
\end{proof}
In the rest of this section we fix a generalized Calabi-Yau metric manifold $(M, \mathcal{J}_1, \mathcal{J}_2)$ with its two pure spinnors $\rho_1$ and $\rho_2$. Recall that we have the decomposition \eqref{eq:3.3} and we choose frames $\{e_\alpha^\pm\}$ for $L_1^\pm$ with dual frames $\{e^\alpha_\pm\}$. 
\begin{rem}
Using the frames \eqref{eq:b4.7} and \eqref{eq:b4.8} we can explicitly identify the functions $\alpha$ in \eqref{eq:d.0.2}. We obtain 
\begin{equation}
\begin{aligned}
\rho_1 &=\frac{1}{\sqrt{\det g}} e^{i\psi_1} e^+_1 \wedge \dots e^+_n\wedge e^-_1 \wedge \dots \wedge e^-_n \cdot \bar{\rho}_1~, \\  \rho_2 &= e^{i\psi_2} e^+_1 \wedge \dots \wedge e^+_n \wedge e^1_- \wedge \dots \wedge e^n_- \cdot \bar{\rho}_2~,
\end{aligned}
\label{eq:6.1ddjdjjd}
\end{equation}
for two real functions $\psi_1$ and $\psi_2$. 
\label{rem:normalization}
\end{rem}
The pure spinors give rise to the volume forms:
 \begin{equation}
   ( \rho_1, \overline{\rho_1}) = c (\rho_2, \overline{\rho_2})  = e^{-2\Phi} \vol_g~,
 \label{definitiondil}
 \end{equation}
 where $\mathrm{vol}_g$ is  the volume form induced by the Riemannian metric $g$  which can be written in any coordinate system as
\begin{equation}
\vol_g = \sqrt{\det g}~ dx^1 \wedge \dots dx^{2n},
\label{eq:4honom2}
\end{equation}
 and $\Phi$ is a function showing the mismatch between the Riemannian volume form and the volume form 
  induced by the pure spinors. In the physics literature such function $\Phi$ is called a \emph{dilaton} and equation \eqref{definitiondil}
   should be regarded as the definition of the dilaton. 

  Proposition \ref{prop:poismod} implies that the divergences of the Poisson structures 
  $P_1$ and $P_2$ in \eqref{definitionPoisson} calculated with 
  respect to $e^{-2\Phi}\vol_g$ are zero. Therefore the divergences of $\omega_\pm^{-1}$ are zero:
  \begin{equation}
   \frac{1}{e^{-2\Phi} \sqrt{ \det g}} \partial_i \left ( e^{-2\Phi} \sqrt{\det g}~ \omega_\pm^{ij} \right )=0.
  \label{ejej333asskd}
  \end{equation}
   This in turn implies
 \begin{equation}
v^\pm = - 2 d \Phi~,
\label{eq:6.11dilaton3}
\end{equation}
 where the $1$-forms $v^\pm$ are defined in \eqref{eq:6.11nose}.  The relations \eqref{eq:6.11dilaton1} and \eqref{eq:6.11dilaton2} imply 
\begin{equation}
 2 \frac{\partial}{\partial z^\alpha_{\pm}} \Phi = \Gamma_{\alpha \bar \beta}^{\pm \bar\beta}~, \quad \quad
 2 \frac{\partial}{\partial z^{\bar \alpha}_{\pm}} \Phi =  \Gamma^{\pm \beta}_{\bar \alpha \beta}~.
 \label{sjsj333}
\end{equation}

Let us analyze unimodularity in this context. Recall that the manifolds $(M, \mathcal{J}_1)$ and $(M, \mathcal{J}_2)$ are generalized Calabi-Yau therefore $L_1 = L_1^+ \oplus L_1^-$ and $L_2 = L_1^+ \oplus \overline{L_1^-}$ are both unimodular. Not only the modular classes vanish but Proposition \ref{prop:ahad} says that they are represented by zero (as opposed to an exact form) when using the appropriate volume form. Define the structure functions
\begin{equation}
\begin{aligned}
c_{\alpha \beta}^{\pm\gamma} &= \langle [e^\pm_\alpha, e^\pm_\beta], e_\pm^\gamma \rangle, & d_{\alpha \beta}^\gamma &= \langle [e^+_\alpha, e^-_\beta], e_+^\gamma \rangle, & e_{\alpha \beta}^\gamma &= \langle [e^+_\alpha, e^-_\beta], e_-^\gamma \rangle, \\
c^{\alpha \beta}_{\pm \gamma} &= \langle [e_\pm^\alpha, e_\pm^\beta], e^\pm_\gamma\rangle, & d^{\alpha \beta}_\gamma &= \langle [e_+^\alpha, e_-^\beta], e^+_\gamma\rangle, & e^{\alpha \beta}_\gamma &= \langle [e_+^\lambda, e_-^\beta], e^-_\gamma \rangle,
\end{aligned}
\label{eq:6.5}
\end{equation}
 where  $c_{\alpha \beta}^{\pm\gamma}$ and $c^{\alpha \beta}_{\pm \gamma}$ vanish if we use the coordinate frames \eqref{eq:b4.7} and \eqref{eq:b4.8}. Using \eqref{eq:6.1ddjdjjd} we 
 compute explicitly the representatives of the modular classes for $L_1$ and $L_2$ to obtain:
\begin{equation}
\begin{aligned}
\theta_{1}(e_\alpha^+) &=    c_{\alpha \beta}^{+\beta} + e_{\alpha\beta}^\beta -  \dive e^+_\alpha + \pi (e_\alpha^+) \cdot (- \log \sqrt{\det g} + i \psi_1) = 0~,\\
\theta_{2}(e_\alpha^+) &=     c_{\alpha\beta}^{+\beta} - e_{\alpha \beta}^\beta - \dive e^+_\alpha + \pi(e_\alpha^+) \cdot (i\psi_2) = 0 ~,
\end{aligned}
\label{eq:6.6}
\end{equation}
 where the divergences are calculated with respect to $\mu=e^{-2\Phi} \vol_g$. 
  Taking the sum of the above expressions and using  the coordinate frames \eqref{eq:b4.7} and \eqref{eq:b4.8} we obtain:
   \begin{equation}
    \frac{\partial}{\partial z^\alpha_+} \left  (  \log (e^{-2\Phi} (\det g)^{1/4} )+ \frac{i}{2} (\psi_1 + \psi_2)  \right ) =0~. 
   \label{2822sjs}
 \end{equation}
  Taking the derivative with respect $z_+^{\bar{\alpha}}$ of this equation and retaining the real part we arrive to:
    \begin{equation}
    \frac{\partial}{\partial z^\alpha_+}   \frac{\partial}{\partial z^{\bar{\beta}}_+} \left  (  \log (e^{-4\Phi} \sqrt{\det g} )  \right ) =0~. 
   \label{2822sjkkds}
 \end{equation}
  We can derive the analogous statement for the complex structure $J_-$  by evaluating $\theta_1(e^-_\alpha)$ and $\theta_2(e^-_\alpha)$. 
   Equation \eqref{2822sjkkds} implies that 
   \begin{equation}
    \log \sqrt{\det g} = \varphi^\pm + \bar{\varphi}^\pm + 4 \Phi~,
   \label{eq:6.relation.g}
   \end{equation}
    where $\varphi^\pm$ is holomorphic with respect to the complex structure $J_\pm$ respectively. In other words it implies that 
  there exist  non-vanishing  holomorpic volume forms. In local complex  coordinates we write them as
\begin{equation}
\Omega_\pm = e^{\varphi^\pm} dz^1_\pm \wedge \dots dz^n_\pm,
\label{eq:4holom1}
\end{equation}
 such that  
 $$\Omega_\pm \wedge \overline{\Omega_\pm} = e^{-4\Phi} \vol_g~.$$
  We can rescale these nowhere vanishing  forms
\begin{equation}
\zeta_\pm = e^{2 \Phi + \varphi^\pm} dz_\pm^1 \wedge \dots dz_\pm^n~,
\label{eq:4holom4}
\end{equation}
 such that $\zeta_\pm \wedge \bar{\zeta}_\pm = \vol_g$.  Since $g$ is parallel with respect to both $\nabla^\pm$
 we have:
\begin{equation}
 \partial_{z^\alpha_\pm} ( \log\sqrt{\det g} )
  = \Gamma^{\pm \beta}_{\alpha \beta} + \Gamma^{\pm \bar \beta}_{\alpha \bar \beta},  \quad 
   \partial_{z^{\bar{\alpha}}_\pm} ( \log\sqrt{\det g}) = \Gamma^{\pm \bar{\beta}}_{\bar\alpha \bar \beta} + \Gamma^{\pm \beta}_{\bar \alpha \beta}~.
\label{eq:6.11c}
\end{equation}
 Using \eqref{sjsj333}  we find the traces of the Christoffel symbols as:
\begin{equation}
\Gamma^{\pm \beta}_{\alpha \beta} = \partial_{z^\alpha_\pm} \left( \varphi^\pm + 2 \Phi \right)~ , \qquad \Gamma^{\pm \bar\beta}_{\bar\alpha \bar\beta} = \partial_{z^{\bar\alpha}_\pm} \left( \overline{\varphi^\pm} + 2 \Phi \right)  ~.
\label{eq:6.11g}
\end{equation}
 This implies that the forms $\zeta_\pm$ are covariantly constant:  $\nabla^\pm \zeta_\pm =0$.  
  Thus the generalized Calabi-Yau metric manifold has $SU(n)$ holonomy for $\nabla^\pm$. 
 
Finally let us calculate some of the traces of structure functions which we will need to use later. 
Using the explicit  frames \eqref{eq:b4.7} and \eqref{eq:b4.8} a direct computation shows
\begin{equation}
[e^\pm_\alpha, e_\pm^\alpha] = \Gamma^{\pm \alpha}_{\beta \alpha} \left(dz^\beta_{\pm} \mp g^{\beta \bar \gamma} \partial_{z^{\bar \gamma}_{\pm}} \right) + 2 \Gamma^{\pm \alpha}_{\bar \beta \alpha} dz^{\bar \beta}~.
\label{eq:6.12}
\end{equation}
Using \eqref{sjsj333} and \eqref{eq:6.11g} we can rewrite this as
\begin{equation}
[e^\pm_{\alpha}, e^\alpha_{\pm}] = d \varphi^\pm \mp g^{-1} d\varphi^\pm - 2 ( \partial^\pm \Phi \pm   g^{-1} \partial^\pm \Phi) + 4 d \Phi~,
\label{eq:6nuevisima}
\end{equation}
 where $d$ is de Rham differential and $d= \partial^\pm + \overline{\partial}^\pm$ is the decomposition with respect to the complex structures $J_\pm$ respectively. 
We are interested in the inner product of this expression with $e_\beta^\mp$ and $e^\beta_\mp$. 
 Using the orthogonality of frames  we obtain a coordinate independent expression for \eqref{eq:6nuevisima}:
\begin{equation}
\langle[e_\alpha^\pm, e^\alpha_\pm], e^\mp_\beta \rangle = \langle d\varphi^\pm \mp g^{-1} d\varphi^\pm + 4 d \Phi, e^\mp_\beta \rangle.
\label{eq:nueva3}
\end{equation}
This can easily be evaluated to obtain:
\begin{equation}
\begin{aligned}
 d_{\alpha \beta}^\alpha &= - \partial_{z^\beta_-} \left( \varphi^+  + 2 \Phi \right) = - \pi(e^-_\beta) \left( \varphi^+  + 2 \Phi \right) , \\ e_{\beta \alpha}^\alpha &=  \partial_{z^\beta_+} \left( \varphi^- + 2 \Phi \right) =  \pi(e^+_\beta) \left( \varphi^- + 2 \Phi \right)~.
 \end{aligned}
\label{eq:6.13}
\end{equation}
Similarly, taking the inner product with $e^\beta_\mp$ we obtain
\begin{equation}
\begin{aligned}
d^{\alpha \beta}_\alpha &=  - g^{\beta \bar \gamma}_- \partial_{z^{\bar\gamma}_-} \left(\varphi^+ + 2 \Phi \right) =   \pi(e^\beta_-) \left( \varphi^+ + 2 \Phi \right), \\  e^{\beta \alpha}_\alpha &=  -g_+^{\beta \bar \gamma} \partial_{z^{\bar \gamma}_+} \left( \varphi^- + 2 \Phi \right) =  - \pi(e^\beta_+) \left( \varphi^- + 2 \Phi \right) ~.
\end{aligned}
\label{eq:6.14}
\end{equation}
In the next section we adopt the following short-hand notation for the action of the anchor $\pi$
\begin{equation}
f^{,\alpha^\pm}  = \pi(e_\pm^\alpha) f~,\quad  f_{,\alpha^\pm} = \pi(e^\pm_\alpha) f~. 
\label{notau229}
\end{equation}
Finally, we can now identify the functions $\psi_{1/2}$ of \eqref{eq:6.1ddjdjjd} obtained in the generalized Calabi-Yau metric context with their counterparts $\varphi^\pm$ obtained in the bihermitian setup. Using the frames in \eqref{eq:b4.7}--\eqref{eq:b4.8} and  taking the difference of the equations in \eqref{eq:6.6} we find:
\begin{equation}
2e_{\alpha \beta}^\beta + \partial_{z^\alpha_+} \bigl(i \psi_1 - i \psi_2  - \log \sqrt{\det g} \bigr) = 0.
\label{eq:6.5.1}
\end{equation}
Comparing with the second equation of \eqref{eq:6.13} we obtain 
\begin{equation}
\partial_{z^\alpha_+} \bigl( i \psi_1 - i \psi_2 - \log \sqrt{\det g} \bigr) = - 2 \partial_{z^\alpha_+} \bigl(\varphi^- + 2 \Phi \bigr).
\label{eq:6.5.2}
\end{equation}
Similarly, computing the modular classes of $L_{1,2}^*$ valuated in $e^\alpha_+$ we obtain
\begin{equation}
g^{\alpha \bar \beta} \partial_{z^{\bar\beta}_+} \bigl(i \psi_1 - i \psi_2 - \log \det \sqrt{g} \bigr) = - 2 g^{\alpha \bar \beta} \partial_{z^{\bar \beta}_+} \bigl(\varphi^- + 2 \Phi \bigr)
\label{eq:6.5.3}
\end{equation}
From where we may assume
\begin{equation}
i \psi_1 - i \psi_2  - \log \det \sqrt{g} = -2 \bigl( \varphi^- + 2 \Phi \bigr).
\label{eq:6.5.4}
\end{equation}
Similarly, by valuating the modular classes in $e_\alpha^-$ and $e^\alpha_-$ we obtain:
\begin{equation}
\log \sqrt{\det g} - i \psi_1 - i \psi_2 = 2 \bigl( \varphi^+ + 2 \Phi \bigr)
\label{eq:6.5.5}
\end{equation}
and comparing with \eqref{eq:6.5.4} we obtain:
\begin{equation}
\psi_1 = i \left( \varphi^\pm - \overline{\varphi^\mp} \right), \qquad \psi_2 = 
i \left( \varphi^+ - \varphi^- \right).
\label{eq:6.5.6}
\end{equation}
We finish this section by summarizing the relations between all the global sections found so far. On the bihermitian setup we have the holomorphic volume forms $\Omega^\pm$ defined in \eqref{eq:4holom1} and the corresponding covariantly constant forms $\zeta^\pm$ defined in \eqref{eq:4holom4}, these latter are global sections of $\det T^{*1,0}_\pm$. On the generalized Calabi-Yau metric setup we have the pure spinors $\rho_{1,2}$ and the sections $\zeta_{1,2}$ of $\det{L_{1,2}}^*$ defined by \eqref{eq:alamiercoles}. We see that under the dual of the isomorphism \eqref{eq:b4.2} the sections $\zeta^\pm$ are mapped to sections of $\det L_1^{\pm*}$, we can write them locally as 
\begin{equation}
\zeta^\pm = e^{\eta^\pm} e_\pm^1 \wedge \dots e_\pm^n, \qquad 2n = \dim M.
\label{eq:4.3}
\end{equation}
It follows from \eqref{eq:6.5.6}, \eqref{eq:6.1ddjdjjd} and \eqref{eq:6.relation.g} that the sections $\zeta_{1,2}$ are given by $e^{\eta^+ + \eta^-}$ and $e^{\eta^+ - \eta^-}$ respectively. 

\section{$N=2,2$ superconformal structure} \label{sec:6}

In this section we state and proof the main Theorem of this article. We start by associating to each pure spinor $\rho_{1,2}$ of  $(M, \mathcal{J}_{1,2})$ a global section of $U^\mathrm{ch}(E)$. We find the explicit description of these sections in terms of the covariantly constant forms \eqref{eq:4holom4} in the bihermitian setup. 

Let $(M, \mathcal{J})$ be a generalized complex manifold and let $\rho$ be a pure spinor. Let $L$ be the corresponding Lie algebroid (the $i$-eigenbundle of $\mathcal{J}$). Recall that $\rho$ gives rise to a global section  $\zeta \in C^\infty(\det L^*)$ given by \eqref{eq:alamiercoles}. Let $\{e_i\}$ be a local frame for $L$ and $\{e^i\}$ be the dual frame. We can write $\zeta$ locally as 
\begin{equation}
\zeta = e^\eta e^1 \wedge \dots e^n, \qquad n = \mathrm{dim} M.
\label{eq:4.1}
\end{equation}
The following Lemma is proved as in \cite[Lem 5.1]{heluani9}
\begin{lem}
The local section 
\begin{equation}
J = \frac{i}{2} \sum_i e^i e_i + i T \eta~,
\label{eq:4.2}
\end{equation}
gives a well defined global section of $U^\mathrm{ch}(E)$.
\label{lem:4.1}
\end{lem}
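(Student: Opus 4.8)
The plan is to prove that the locally defined section \eqref{eq:4.2} does not depend on the chosen local frame $\{e_i\}$ of $L$; since the only frame-dependent ingredients are the dual frame $\{e^i\}$ and the function $\eta$ fixed by \eqref{eq:4.1}, this independence is exactly the cocycle condition needed for the local expressions to glue into a global section of $U^\mathrm{ch}(E)$. Concretely I would fix two frames related by $\tilde e_i = A_i^j e_j$ with $A \in GL(n, C^\infty(M))$, note that the dual frame transforms by $B := (A^{-1})^\top$ and that, comparing $\zeta = e^{\eta}\, e^1\wedge \dots \wedge e^n = e^{\tilde\eta}\, \tilde e^1 \wedge \dots \wedge \tilde e^n$, the function transforms as $\tilde\eta = \eta + \log\det A$. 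Writing $\tilde J$ for the expression \eqref{eq:4.2} in the tilded frame, the identity $\tfrac{i}{2}(\sum_i\tilde e^i\tilde e_i - \sum_i e^ie_i) + iT(\tilde\eta - \eta)=0$ to be verified reduces, after substituting $\tilde\eta - \eta = \log\det A$, to the single statement
\[ \sum_i \tilde e^i \tilde e_i - \sum_i e^i e_i = -2\, T \log\det A, \]
where all products are the normally ordered product in $U^\mathrm{ch}(E)$.

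The next step separates this into a classical and a quantum contribution. Using the $\cO$-module relation (relation (3) of Proposition \ref{prop:universal}) to write $\tilde e^i = i(B^i_j)\cdot e^j$ and $\tilde e_i = i(A_i^k)\cdot e_k$, the \emph{classical} contribution -- obtained by pretending the function coefficients commute freely through the normally ordered product -- is $\sum_i B^i_j A_i^k\, e^j e_k = \delta^j_k\, e^j e_k = \sum_i e^i e_i$, since $\sum_i B^i_j A_i^k = (A^{-1}A)^j_k = \delta^j_k$. Thus all of the content sits in the \emph{quantum} correction produced when one genuinely commutes the functions $A_i^k$ past $e^j$ inside the normally ordered product.

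To extract that correction I would invoke the non-commutative Wick formula and quasi-associativity (Appendix \ref{sec:appendixc}), fed by the two structural brackets of Proposition \ref{prop:universal}: relation (5), $[e^j{}_\Lambda i(f)] = i(\pi(e^j) f)$, which produces the anchor derivatives $\pi(e^j)A_i^k$; and relation (2), $[e^j{}_\Lambda e_k] = j([e^j,e_k]) + 2\chi\, i(\langle e^j,e_k\rangle)$, whose anomalous term $2\chi\,\delta^j_k$ (using $\langle e^j,e_k\rangle = \delta^j_k$ for a dual frame) is the source of the quantum effect. The crucial mechanism is that the odd variable $\chi$ from this anomaly combines, under the integral appearing in the Wick/quasi-associativity correction, with the $\pi$-derivative of the frame matrix to yield a term proportional to the translation operator $T$; summing over $i$ the matrix factors then assemble into the trace $\mathrm{tr}\!\left(A^{-1}\pi(\cdot)A\right) = \pi(\cdot)\log\det A$, landing precisely on $-2\,T\log\det A$. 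Combined with $iT\tilde\eta = iT\eta + iT\log\det A$ this yields $\tilde J = J$.

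The hard part is the bookkeeping of the previous paragraph: correctly tracking the quantum correction through the non-commutative Wick formula and quasi-associativity, and in particular checking that the anomaly is a \emph{total} $T$-derivative with exactly the coefficient $-2$, so that it is absorbed by the transformation $\tilde\eta = \eta + \log\det A$ of the $iT\eta$ term. The appearance of $T$ out of the odd $\chi$-pairing in relation (2) is the genuinely chiral feature with no classical analogue, and getting its normalization right is where care is needed. This is the same mechanism as in \cite[Lem.~5.1]{heluani9}, whose computation I would follow.
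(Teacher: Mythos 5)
Your proposal is correct and follows exactly the route the paper takes: the paper's ``proof'' is simply the citation ``proved as in \cite[Lem 5.1]{heluani9}'', which is precisely the frame-change computation you outline --- checking that the quantum correction to $\tfrac{i}{2}\sum_i e^i e_i$ under $\tilde e_i = A_i^j e_j$ is the total derivative $-2\,T\log\det A$ absorbed by the transformation $\tilde\eta = \eta + \log\det A$, with the anomaly arising from the $2\chi\langle\cdot,\cdot\rangle$ term and the anchor action via the non-commutative Wick formula and quasi-associativity. Your reduction of globality to frame-independence, the transformation laws, and the identification of the $T$-derivative mechanism all match the cited argument.
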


\begin{rem}
Integrability of $L$ is not used in the proof of this Lemma, so we may replace $\mathcal{J}$ by a generalized almost complex structure such that $\det L^*$ is trivial. The section $\zeta$ can be any non-vanishing section of $\det L^*$. 
\label{rem:4.1}
\end{rem}

Let now $(M, \mathcal{J}_{1,2})$ be a generalized Calabi-Yau metric manifold with pure spinors $\rho_{1,2}$. Let $\zeta^\pm$ be the corresponding global sections of $\det {L_1^\pm}^*$ written as in \eqref{eq:4.3}. Recall that the global sections $\zeta_{1,2}$ of $\det L_{1,2}^*$ defined by the pure spinors $\rho_{1,2}$ are given in these frames by $e^{\eta^+ + \eta^-}$ and $e^{\eta^+ - \eta^-}$. The global sections $J_1$ and $J_2$ of $U^\mathrm{ch}(E)$ constructed by Lemma \ref{lem:4.1} give rise to global sections
\begin{equation}
J^\pm = \frac{i}{2} \sum_i e^i_\pm e^\pm_i + i T \eta^\pm~.
\label{eq:4.4}
\end{equation}
\begin{rem}
We remark here than unlike the usual Calabi-Yau case with $H=0$, where the quantum corrections to the local fields $e^\alpha_\pm e_\alpha^\pm$ are given by a global holomorphic volume form, in this more general case, the relevant
  global sections of $\det T^{*1,0}_{\pm}$ are not given by the holomorphic expressions \eqref{eq:4holom1} but rather the covariantly constant ones \eqref{eq:4holom4}, where the dilaton correction appears explicitly. 
\label{rem:423}
\end{rem}
Let us now
recal the main theorem of \cite{heluani9} 
 in a slightly more general form. 
\begin{thm}[{\cite[Thm 5.5]{heluani9}}] Let $(M, \mathcal{J})$ be a generalized Calabi-Yau manifold. Let $\{e_i\}$ be a local frame for the associated Lie algebroid $L$ and let $\{e^i\}$ be the dual frame. Let $\zeta \in C^\infty(\det L^*)$ be a global section written as \eqref{eq:4.1} and let $J$ be the corresponding global section of $U^\mathrm{ch}(E)$ given by Lemma \ref{lem:4.1}. 
The following is true\footnote{Here $H$ is a section of $U^\mathrm{ch}(E)$ and should not be confused with the three form defining $E$. We keep this notation so that it agrees with previous literature.}. 
\begin{enumerate}
\item 
\begin{equation}
{[J}_\Lambda J] = - \left( H + \frac{c}{3} \lambda \chi \right), \qquad c = 3 \dim M
\label{eq:7.thma3}
\end{equation}
where 
\begin{multline}
H = H_0 - i T \mathcal{J} \mathcal{D} \eta = \frac{1}{4} \Bigl[ e^i \bigl( e^j[e_i,e_j] + e_i \bigl(e_j[ e^i, e^j] \bigr) \Bigr] - \frac{i}{2} T \mathcal{J} [e^i,e_i] -\\ \frac{1}{2} \left( e_i Se^i + e^i Se_i \right) - i T \mathcal{J} \mathcal{D} \eta
\label{eq:7.thma4}
\end{multline}
\item The fields $J$ and $H$ generate the $N=2$ superconformal vertex algebra of central charge $c$.
\end{enumerate}
\label{thm:7.thma}
\end{thm}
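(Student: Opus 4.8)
The plan is to verify directly that the single odd field $J$ defined in \eqref{eq:4.2} generates, together with the field $H$ of \eqref{eq:7.thma4}, the full $N=2$ superconformal vertex algebra, by computing all the relevant $\Lambda$-brackets using the defining relations of $U^\mathrm{ch}(E)$ in Proposition \ref{prop:universal}. The key observation, following \cite[Thm 5.5]{heluani9}, is that the $N_K=1$ SUSY formalism packages the $N=2$ structure economically: a single odd superfield $J$ of appropriate conformal weight, satisfying a bracket of the form \eqref{eq:7.thma3}, is equivalent to the four generators $(L,G^\pm,\mathcal J)$ of the ordinary $N=2$ algebra. So the real content is part (1), the computation of $[J_\Lambda J]$, and then part (2) follows by unpacking the SUSY $\Lambda$-bracket into its ordinary components and matching against the standard $N=2$ OPEs.

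First I would compute $[J_\Lambda J]$. Writing $J = \tfrac{i}{2}\sum_i e^i e_i + iT\eta$, I would expand the bracket by sesquilinearity and the non-commutative Wick/Leibniz rules for $\Lambda$-brackets (Appendix \ref{sec:appendixc}), reducing everything to the elementary brackets furnished by Proposition \ref{prop:universal}: the bracket $[j(A)_\Lambda j(B)] = j([A,B]) + 2\chi\, i(\langle A,B\rangle)$ of sections of $\Pi E$, the module relation $[j(A)_\Lambda i(f)] = i(\pi(A)f)$, and the compatibility $j\mathcal Df = Si(f)$. The central term $\tfrac{c}{3}\lambda\chi$ with $c=3\dim M$ should emerge from the $2\chi\langle e^i, e_i\rangle$ contributions, since $\{e_i\},\{e^i\}$ are dual frames for $L$ and $\bar L \cong L^*$, giving $\langle e^i, e_i\rangle$ summing to $\dim M$. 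The quadratic-in-structure-functions pieces of $H$ in \eqref{eq:7.thma4} arise from iterated brackets $e^j[e_i,e_j]$ etc., and the $-iT\mathcal J\mathcal D\eta$ term tracks the $T\eta$ summand in $J$ together with the relation $S\vac = 0$ and $T = S^2$.

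Having established \eqref{eq:7.thma3}, for part (2) I would define the ordinary $N=2$ fields by Fourier expansion in $\theta$: writing $J = J^{(0)} + \theta J^{(1)}$ and $H = H^{(0)} + \theta H^{(1)}$, the four components $(J^{(0)}, J^{(1)}, H^{(0)}, H^{(1)})$ are candidates for the current, the two supercurrents, and the stress tensor. The SUSY $\Lambda$-bracket $[J_\Lambda J]$ encodes, through the $\lambda$ and $\chi$ expansion, the entire collection of component OPEs; the identity \eqref{eq:7.thma3} together with translation invariance $[S,Y] = (\partial_\theta - \theta\partial_z)Y$ and $[T,Y]=\partial_z Y$ then forces these components to close on the $N=2$ relations with central charge $c = 3\dim M$. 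This last matching is essentially a formal consequence of the $N_K=1 \Leftrightarrow N=2$ dictionary, so the genuinely computational heart of the proof is the bracket $[J_\Lambda J]$.

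I expect the main obstacle to be the bookkeeping in the non-commutative Wick formula for $[J_\Lambda J]$: because the normally ordered products $e^i e_i$ are built from non-commuting fields whose brackets are themselves nonzero, the Leibniz expansion produces not only the naive terms $j([\cdot,\cdot])$ but also double-bracket corrections and quantum normal-ordering terms, and it is precisely these that must conspire to reproduce the somewhat intricate expression \eqref{eq:7.thma4} for $H$ and the clean central term. Keeping careful track of signs from the parity-change functor $\Pi$ and from the Clifford-type anticommutation of the $e_i$, and verifying that all frame-dependence cancels so that $H$ is a genuine global section (as asserted, since $J$ is global by Lemma \ref{lem:4.1}), will be the delicate part. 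Since the theorem is quoted from \cite{heluani9} in ``slightly more general form,'' I would isolate exactly where the generalized Calabi-Yau hypothesis enters — namely in the existence of the global $\chi$ and $\eta$ of \eqref{eq:d.0.1} and \eqref{eq:4.1} — and confirm that the computation goes through verbatim in the present setting.
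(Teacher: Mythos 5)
Your plan for part (1) would work, but your treatment of part (2) contains a genuine gap. You assert that once $[J_\Lambda J] = -\left(H + \tfrac{c}{3}\lambda\chi\right)$ is established, the full $N=2$ structure follows ``formally'' from the $N_K=1\Leftrightarrow N=2$ dictionary together with translation invariance, so that ``the genuinely computational heart of the proof is the bracket $[J_\Lambda J]$.'' This is not so. The dictionary only says that the $N=2$ superconformal algebra is packaged by a pair $(J,H)$ satisfying all three families of relations in \eqref{eq:7.thm2}: one also needs $[H_\Lambda J] = (2T+2\lambda+\chi S)J$ and $[H_\Lambda H] = (2T+3\lambda+\chi S)H + \tfrac{c}{3}\lambda^2\chi$, i.e.\ that $H$ is a superconformal vector and $J$ is primary of conformal weight $1$. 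Neither of these is a formal consequence of the single bracket $[J_\Lambda J]$ defining $H$; attempts to extract them from the Jacobi identity alone only yield functional constraints, not the relations themselves. (This is also why the paper's Lemma \ref{lem:7.thmb1} must \emph{assume} that $J^++J^-$ and $J^+-J^-$ generate $N=2$ algebras as a hypothesis, rather than deducing it from the $[J^\pm{}_\Lambda J^\pm]$ brackets.) In your direct approach, part (2) would therefore require two further, and substantially heavier, Wick-formula computations.

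The paper's own proof avoids all of this by a transport-of-structure argument, and your closing remark about isolating ``where the generalized Calabi-Yau hypothesis enters'' is in fact the germ of it, though your proposal inverts the logic. The paper sets $J_0 = J - iT\eta$ and cites \cite[Thm 5.5]{heluani9} for the statement that in the frame where the section $\zeta$ of $\det L^*$ is constant (so that $J = J_0$ and $H = H_0$), both items (1) and (2) hold. Since $J$ is a global section by Lemma \ref{lem:4.1} and the $\Lambda$-brackets of global sections are frame-independent, the only thing left to verify is how the formulas look in a general frame: this is the short computation of the cross terms $[iT\eta_\Lambda J_0]$ and $[J_\Lambda iT\eta]$ in \eqref{eq:7.thma5}--\eqref{eq:7.thma6}, which produces exactly the correction $-iT\mathcal{J}\mathcal{D}\eta$ in \eqref{eq:7.thma4}. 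Both parts of the theorem then follow at once, with no need to recompute $[J_\Lambda J]$, let alone $[H_\Lambda J]$ or $[H_\Lambda H]$, from scratch. If you insist on a self-contained verification, your outline for part (1) is viable (with the caveat that the central term $\tfrac{c}{3}\lambda\chi$ arises from the integral term of the non-commutative Wick formula, not directly from the single contractions $2\chi\langle e^i,e_i\rangle$), but you must then add the two missing bracket computations to close part (2).
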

\begin{proof}
Define $J_0 = J - iT \eta$. In \cite{heluani9} it was proved that in the coordinate system where the global section of $\det L^*$ is constant  then the fields $J_0$ and $H_0$ generate a copy of the $N=2$ superconformal algebra. Since we will have to deal below with generalized Calabi-Yau metric manifolds, where there are two generalized complex structures and therefore two global spinors, we need to keep track of the fields $J$ in a more general coordinate system.  We compute
\begin{equation}
{[i T \eta}_\Lambda J_0] = \frac{1}{2} \lambda \left( \eta^{,i} e_i - \eta_{,i} e^i \right) = - \frac{i}{2}  \lambda \mathcal{J} \left( \eta^{,i}e_i + \eta_{,i} e^i \right) =  - i  \lambda \mathcal{J} \mathcal{D} \eta.
\label{eq:7.thma5}
\end{equation}
By skewsymmetry we obtain 
\begin{equation}
{[J}_\Lambda i T \eta] = i (\lambda + T) \mathcal{J} \mathcal{D} \eta. 
\label{eq:7.thma6}
\end{equation}
Combining \eqref{eq:7.thma6} and \eqref{eq:7.thma5} we obtain \eqref{eq:7.thma4}. The theorem follows from \cite[Thm 5.5]{heluani9}  since $J$ and $H$ are just expressions for $J_0$, $H_0$ in a more general coordinate system.
\end{proof}

Let now  $(M, \mathcal{J}_1, \mathcal{J}_2)$ be a generalized Calabi-Yau metric manifold. Choose frames $\{e_\alpha^\pm\}$ for the Lie algebroids $L_1^\pm$ with dual frames $\{e^\alpha_\pm\}$. Let $(M,g, J_\pm)$ be the associated  bihermitian manifold, and let \eqref{eq:4.3} be the local expressions for the global sections of $\det{L_1^\pm}^*$. Recall that in terms of the bihermitian data we have 
\begin{equation}
\eta^\pm = \varphi^\pm + 2 \Phi
\label{eq:7.1}
\end{equation}
where $\Omega^\pm = e^{\varphi^\pm} dz_\pm^1 \wedge \dots \wedge dz_\pm^n$ are the global holomorphic volume forms \eqref{eq:4holom1} and $\Phi$ is the dilaton.
We have the global sections \eqref{eq:4.4} of $U^\mathrm{ch}(E)$. 
\begin{thm}
The sections $J^\pm$ generate two commuting copies of the $N=2$ superconformal vertex algebra of central charge $c = \tfrac{3}{2} \dim M$. More precisely, defining 
\begin{multline}
H^\pm = \frac{1}{4} \left( e^\alpha_\pm \bigl( e_\beta^\pm [e_\alpha^\pm, e^\beta_\pm] \bigr) - e^\alpha_\pm \bigl( e^\beta_\pm [e_\alpha^\pm, e_\beta^\pm] \bigr) + e_\alpha^\pm \bigl( e^\beta_\pm [e^\alpha_\pm, e_\beta^\pm] \bigr) - e_\alpha^\pm \bigl( e_\beta^\pm [e^\alpha_\pm, e^\beta_\pm] \bigr) \right) \\ + \frac{1}{2} \left( e^\alpha_\pm Se_\alpha^\pm + e_\alpha^\pm Se^\alpha_\pm \right) - 
i \frac{T}{2} \mathcal{J}_\pm [e^\alpha_\pm, e_\alpha^\pm] - i T \mathcal{J}_\pm \mathcal{D} \eta^\pm~,
\label{eq:7.thm1}
\end{multline}
where $\mathcal{J}_\pm = \tfrac{1}{2} (\mathcal{J}_1 \pm \mathcal{J}_2 )$, 
we obtain the commutation relations:
\begin{equation}
\begin{aligned}
{[J^\pm}_\Lambda J^\pm] &= - \left( H^\pm + \frac{c}{3} \lambda \chi \right), & {[J^\pm}_\Lambda J^{\mp}] &= 0, \\
{[H^\pm}_\Lambda J^\pm] &= (2 T + 2\lambda + \chi S) J^\pm, & {[H^\pm}_\Lambda J^\mp] &= 0, \\
{[H^\pm}_\Lambda H^\pm] &= (2 T + 3 \lambda + \chi S) H^\pm + \frac{c}{3} \lambda^2 \chi, & {[H^\pm}_\Lambda H^\mp] &=0,
\end{aligned}
\label{eq:7.thm2}
\end{equation}
\label{thm:1}
\end{thm}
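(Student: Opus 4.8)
The plan is to deduce all six relations in \eqref{eq:7.thm2} from only two computations, exploiting the fact that the full $N=2$ superconformal vertex algebra in a given sector is generated by the single even field $J^\pm$ together with the single relation $[J^\pm{}_\Lambda J^\pm]=-(H^\pm+\tfrac c3\lambda\chi)$. Concretely, $H^\pm$ is nothing but (the $\lambda$-independent part of) $-[J^\pm{}_\Lambda J^\pm]$, so it lies in the vertex subalgebra $\langle J^\pm\rangle$ generated by $J^\pm$; once the diagonal relation is known, the relations $[H^\pm{}_\Lambda J^\pm]=(2T+2\lambda+\chi S)J^\pm$ and $[H^\pm{}_\Lambda H^\pm]=(2T+3\lambda+\chi S)H^\pm+\tfrac c3\lambda^2\chi$ are forced by the SUSY Jacobi identity and sesquilinearity, exactly as in the proof of Theorem \ref{thm:7.thma}. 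Likewise, once I prove the single vanishing $[J^+{}_\Lambda J^-]=0$, skew-symmetry gives $[J^-{}_\Lambda J^+]=0$, and the non-commutative Wick formula propagates this to the entire subalgebra $\langle J^+\rangle$; since $H^+\in\langle J^+\rangle$ and $H^-\in\langle J^-\rangle$, all four off-diagonal brackets $[J^\pm{}_\Lambda J^\mp]$, $[H^\pm{}_\Lambda J^\mp]$, $[H^\pm{}_\Lambda H^\mp]$ vanish simultaneously. Thus it suffices to establish (I) the diagonal relation $[J^\pm{}_\Lambda J^\pm]=-(H^\pm+\tfrac c3\lambda\chi)$ with $c=\tfrac32\dim M$, and (II) the commuting relation $[J^+{}_\Lambda J^-]=0$.

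For (I) I would reproduce the computation of \cite{heluani9} behind Theorem \ref{thm:7.thma} one sector at a time. The input is the frame $\{e^\pm_\alpha,e^\alpha_\pm\}$ of $L_1^\pm\oplus\overline{L_1^\pm}$, its pairings and Dorfman brackets \eqref{dorfgai}, and the trivializing section $\zeta^\pm\in C^\infty(\det{L_1^\pm}^*)$ of \eqref{eq:4.3}; the crucial point, emphasized in Remark \ref{rem:423}, is that the exponent is the dilaton-corrected $\eta^\pm=\varphi^\pm+2\Phi$ rather than the bare holomorphic data $\varphi^\pm$. The unimodularity needed to close the $N=2$ relations, with the modular class represented by zero in this frame, is precisely what was established in Section \ref{sec:5} (equations \eqref{eq:6.6} together with \eqref{eq:6.relation.g}--\eqref{eq:6.11g}), and the central charge comes out as three times the complex rank of $L_1^\pm$, namely $\tfrac32\dim M$. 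The one new feature compared to \cite{heluani9} is that the frame Dorfman brackets $[e^\pm_\alpha,e^\beta_\pm]$ are no longer confined to the sector, and their leakage is measured by the structure functions $d,e$ of \eqref{eq:6.5}, whose traces are recorded in \eqref{eq:6.13}--\eqref{eq:6.14}. As a consistency check, note that $J^+ + J^-=J_1$, so applying Theorem \ref{thm:7.thma} to the genuine generalized Calabi--Yau structure $\mathcal J_1$ (and using (II)) reproduces the sum $[J^+{}_\Lambda J^+]+[J^-{}_\Lambda J^-]$, with $H^+ + H^-=H_1$ and the central charges adding to $3\dim M$; the analogous statement for $\mathcal J_2$ gives a second such identity.

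The heart of the matter is (II), $[J^+{}_\Lambda J^-]=0$, which is the genuinely new phenomenon of the metric case. Computing this bracket by the Wick formula, every central contribution is proportional to a pairing $\langle C_+,C_-\rangle=0$ (orthogonality of the two eigenbundles of $G$ in \eqref{eq:3.4}) and therefore drops out; what remains are the contributions of the mixed Dorfman brackets $[e^+_\alpha,e^-_\beta]$ and their contractions, together with the brackets of the $iT\eta^\pm$ terms. The surviving traces are exactly $d^\alpha_{\alpha\beta}$, $e^\alpha_{\beta\alpha}$, $d^{\alpha\beta}_\alpha$, $e^{\beta\alpha}_\alpha$, which by \eqref{eq:6.13}--\eqref{eq:6.14} equal the anchored derivatives $\pi(e)(\varphi^\mp+2\Phi)=\pi(e)\eta^\mp$, and these cancel against the terms produced by $[J^+{}_\Lambda iT\eta^-]$ and $[iT\eta^+{}_\Lambda J^-]$ precisely because of the identity $\eta^\pm=\varphi^\pm+2\Phi$ of \eqref{eq:7.1}. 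In other words, the vanishing of the commutator is forced by the generalized Calabi--Yau metric normalization \eqref{eq:3.5}, re-expressed through the dilaton; were one to use the holomorphic volume forms $\Omega^\pm$ instead of the covariantly constant $\zeta^\pm$, the two sectors would fail to commute.

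I expect the main obstacle to be exactly this cancellation in (II): organizing the normally ordered quartic and $T$-linear terms coming from the composite fields $e^\alpha_\pm e^\pm_\alpha$ and matching them against the dilaton contributions is delicate, and it is the only place where both the orthogonality $\langle C_+,C_-\rangle=0$ and the full strength of the Section \ref{sec:5} identities are simultaneously needed. The diagonal computation (I), by contrast, is a sector-wise repetition of \cite{heluani9} and presents no conceptual novelty beyond carefully keeping track of the dilaton.
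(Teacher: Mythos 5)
Your two computations (I) and (II) are indeed the computational core of the paper's proof (its parts 2 and 1, respectively), and your commutant argument for the off-diagonal relations is sound: if $[{J^+}_\Lambda J^-]=0$, the Jacobi identity shows that every coefficient of $[{J^+}_\Lambda J^+]$ --- in particular $H^+$ --- commutes with $J^-$, and iterating gives $[{H^\pm}_\Lambda J^\mp]=[{H^+}_\Lambda H^-]=0$. The gap is in the diagonal step. It is \emph{not} true that the single relation $[{J}_\Lambda J]=-(H+\tfrac{c}{3}\lambda\chi)$ forces $[{H}_\Lambda J]=(2T+2\lambda+\chi S)J$ and $[{H}_\Lambda H]=(2T+3\lambda+\chi S)H+\tfrac{c}{3}\lambda^2\chi$ by Jacobi and sesquilinearity. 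Counterexample: let $(J,H)$ be a genuine $N=2$ pair in a SUSY vertex algebra $W$, let $U$ be a commutative SUSY vertex algebra (all $\Lambda$-brackets zero) and $v\in U$ a nonzero even element; then $J'=J\otimes 1+1\otimes v$ in $W\otimes U$ satisfies $[{J'}_\Lambda J']=-\bigl(H\otimes 1+\tfrac{c}{3}\lambda\chi\bigr)$, defining the very same $H$, yet $[{(H\otimes 1)}_\Lambda J']=(2T+2\lambda+\chi S)(J\otimes 1)\neq (2T+2\lambda+\chi S)J'$. So the remaining relations in \eqref{eq:7.thm2} carry genuine content beyond (I)+(II). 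Your reading of the proof of Theorem \ref{thm:7.thma} is also off: that proof does not deduce item (2) formally from item (1); it reduces, by a change of frame, to the explicit verifications carried out in \cite{heluani9}.

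What fills this gap in the paper --- and what you demote to a ``consistency check'' --- is precisely the essential input: because \emph{both} $\mathcal{J}_1$ and $\mathcal{J}_2$ are generalized Calabi-Yau (this is exactly where the metric condition enters), Theorem \ref{thm:7.thma} applies to the diagonal and anti-diagonal currents $J_1=J^++J^-$ and $J_2=J^+-J^-$, giving the full $N=2$ relations for both pairs at central charge $3\dim M$; a separate computation (part 3 of the paper's proof) identifies the two superconformal vectors with each other and with $H^++H^-$; and then Lemma \ref{lem:7.thmb1} splits everything into sectors --- note that this lemma needs both the diagonal and the anti-diagonal $N=2$ structures, since only their combination lets one separate $[H_\Lambda J^+]$ from $[H_\Lambda J^-]$. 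Alternatively, if your (I) is meant as a direct sector-wise verification of all the $N=2$ relations ``as in \cite{heluani9}'', that is not justified either: the argument of \cite{heluani9} uses that $L$ is a maximal isotropic, Dorfman-involutive subbundle, whereas $L_1^\pm\oplus\overline{L_1^\pm}=C_\pm\otimes\mathbb{C}$ is not isotropic and, as you yourself note, the frame brackets leak into the opposite sector; your proposal offers no mechanism for controlling this leakage in a direct computation of $[{H^\pm}_\Lambda H^\pm]$, and the paper's detour through $J_1,J_2$ exists exactly to avoid that computation.
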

\begin{proof}
The proof of the Theorem will consist on 4 parts. First we will show that the two sectors commute, that is
\begin{equation}
{[J^+}_\Lambda J^-] = 0.
\label{eq:7.1a}
\end{equation}
In the second part we will identify the superconformal vector in each sector, namely we will compute $H^\pm$ given by the first equation in \eqref{eq:7.thm2}.
In the third part we will use the main Theorem of \cite{heluani9}, namely that each  $J_1 = J^+ + J^-$ and $J_2 = J^+ - J^-$ generate a copy of the $N=2$ superconformal vertex algebra of central charge $3 \dim M$. The superconformal vector of these algebras coincide and we will identify it with $H^+ + H^-$. After this, the theorem will follow by an application of the Jacobi identity of conformal algebras.  

\begin{enumerate}
\item \textbf{Commuting sectors}
\begin{multline}
{[e^\alpha_-}_\Lambda J^+] = - \frac{i}{2} \left( d^{\beta \alpha}_\gamma  e^\gamma_+ \right) e^+_\beta - \frac{i}{2} \left( e^{\beta \alpha}_\gamma e^\gamma_- \right) e^+_\beta  + \frac{i}{2} e^\beta_+ \left( d^{\gamma \alpha}_\beta e_\gamma^+ \right) +  \\ \frac{i}{2} e^\beta_+ \left( e^\alpha_{\beta \gamma} e^\gamma_- \right)  + i (\lambda + T) \eta^{+,\alpha^-} - i \lambda d^{\beta \alpha}_\beta = \\ = \frac{i}{2} \Bigl( e_{\beta\gamma}^\alpha \left( e^\beta_+ e^\gamma_- \right) - e^{\gamma \alpha}_\beta \left( e^\beta_- e_\gamma^+ \right) \Bigr) + i (\lambda + T) \left( \eta^{+,\alpha^-} - d^{\beta\alpha}_\beta \right) 
\label{eq:blablah}
\end{multline}
where we used quasi-associativity in the last line. According to \eqref{eq:6.14} the last term vanishes, and using skew symmetry we obtain:
\begin{equation}
{[J^+}_\Lambda e^\alpha_-] = \frac{i}{2} \Bigl( e_{\beta\gamma}^\alpha \left( e^\beta_+ e^\gamma_- \right) - e^{\gamma \alpha}_\beta \left( e^\beta_- e_\gamma^+ \right) \Bigr)~.
\label{eq:7.3}
\end{equation}
Similarly we compute
\begin{multline}
{[e_\alpha^-}_\Lambda J^+] = \frac{i}{2} \left( e^{\beta \gamma}_\alpha e_\gamma^- \right) e_\beta^+ + \frac{i}{2} \left( d_{\gamma \alpha}^\beta e^\gamma_+ \right) e_\beta^+ - \frac{i}{2} e^\beta_+ \left( d_{\beta \alpha}^\gamma e_\gamma^+ \right)  - \frac{i}{2} e^\beta_+ \left( e_{\beta \alpha}^\gamma e_\gamma^- \right) + \\ i \lambda d^\beta_{\beta \alpha} + i (\lambda + T) \eta^+_{,\alpha^-} = \\ \frac{i}{2} \left( e^{\beta \gamma}_\alpha \left( e_\gamma^- e_\beta^+ \right) - e_{\gamma \alpha}^\beta \left(e^\gamma_+ e_\beta^- \right) \right) + i (\lambda + T) \left( d_{\beta\alpha}^\beta + \eta^+_{,\alpha^-} \right)~,
\label{eq:7.4}
\end{multline}	
and the last term vanishes because of \eqref{eq:6.13}. Using skew-symmetry we obtain:
\begin{equation}
{[J^+}_\Lambda e^-_\alpha] =  \frac{i}{2} \left( e^{\beta \gamma}_\alpha \left( e_\gamma^- e_\beta^+ \right) - e_{\gamma \alpha}^\beta \left(e^\gamma_+ e_\beta^- \right) \right)~.
\label{eq:7.5}
\end{equation}
We also need
\begin{equation}
{[T \eta^-}_\Lambda J^+] = - \lambda \frac{i}{2} \left( \eta^{-,\alpha^+} e_\alpha^+ - \eta^-_{,\alpha^+} e^\alpha_+ \right)~,
\label{eq:7.6}
\end{equation}
and from skewsymmetry:
\begin{equation}
{[J^\pm}_\Lambda i T \eta^-] = - \frac{1}{2} (\lambda + T)   \left( \eta^{-,\alpha^+} e_\alpha^+ - \eta^-_{,\alpha^+} e^\alpha_+ \right).
\label{eq:7.7}
\end{equation}
Combining \eqref{eq:7.7}, \eqref{eq:7.5} and \eqref{eq:7.3} we obtain
\begin{multline}
{[J^+}_\Lambda J^-] = - \frac{1}{4} \Bigl( e_{\beta\gamma}^\alpha \left( e^\beta_+ e^\gamma_- \right) - e^{\gamma \alpha}_\beta \left( e^\beta_- e_\gamma^+ \right) \Bigr) e_\alpha^- +  \\ + \frac{1}{4} e^\alpha_- \left( e^{\beta \gamma}_\alpha \left( e_\gamma^- e_\beta^+ \right) - e_{\gamma \alpha}^\beta \left(e^\gamma_+ e_\beta^- \right) \right) - \frac{1}{2} (\lambda + T)  \left( \eta^{-,\alpha^+} e_\alpha^+ - \eta^-_{,\alpha^+} e^\alpha_+ \right) \\ - \frac{1}{4} \int_0^\Lambda \Bigl[  e_{\beta\gamma}^\alpha \left( e^\beta_+ e^\gamma_- \right) - e^{\gamma \alpha}_\beta \left( e^\beta_- e_\gamma^+ \right)_\Gamma e^-_\alpha \Bigr] d\Gamma
\label{eq:7.8}
\end{multline}
The integral term can be easily evaluated to be 
\begin{equation}
- \frac{\lambda}{2} \Bigl( e_{\beta \alpha}^\alpha e^\beta_+ + e^{\beta \alpha}_\alpha e_\beta^+ \Bigr),
\label{eq:7.9}
\end{equation}
which cancels the $\lambda$-term in \eqref{eq:7.8} due to \eqref{eq:6.13} and \eqref{eq:6.14}.  Using quasi-associativity we can write the first two terms of \eqref{eq:7.8} (the cubic terms) as:
\begin{equation}
- \frac{1}{2} T \left( e^{\beta \alpha}_\alpha e^+_\beta + e^{\alpha}_{\beta\alpha} e^\beta_+ \right) =  \frac{1}{2} T \left( \eta^{-,\beta^+} e_\beta^+ - \eta^-_{,\beta^+} e^\beta_+  \right)
\label{eq:7.10}
\end{equation}
Combining \eqref{eq:7.10}, \eqref{eq:7.9} and \eqref{eq:7.8} we obtain \eqref{eq:7.1a}.
\item \textbf{Superconformal vectors.} 
We may work with the frames \eqref{eq:b4.7} and \eqref{eq:b4.8} where $c^{\pm \alpha}_{\beta \gamma} = 0$. 
\begin{multline}
{[e_\alpha^+}_\Lambda J^+] =\frac{i}{2} \left(d^{\beta \gamma}_\alpha e_\gamma^-  - d_{\alpha \gamma}^\beta e^\gamma_-  \right) e_\beta^+  + \\ i \chi e_\alpha^+ + i (\lambda + T) \eta^+_{,\alpha^+} + \frac{i}{2} \int_0^\Lambda \bigl[ \left( d^{\beta \gamma}_\alpha e_\gamma^- - d_{\alpha \gamma}^\beta {e^\gamma_-}\right)_\Gamma e_\beta^+ \bigr] d\Gamma
\label{eq:7.2.1}
\end{multline}
The integral term clearly vanishes. Using quasi-associativity we obtain
\begin{equation}
{[e_\alpha^+}_\Lambda J^+] = \frac{i}{2} \left( d^{\beta \gamma}_\alpha (e_\gamma^- e_\beta^+) - d_{\alpha \gamma}^\beta (e^\gamma_- e_\beta^+) \right) + i \chi e_\alpha^+ + i (\lambda + T) \eta_{,\alpha^+}^+~,
\label{eq:7.2.2}
\end{equation}
and with skew-symmetry this reads:
\begin{equation}
{[J^+}_\Lambda e_\alpha^+] = \frac{i}{2} \left( d^{\beta \gamma}_\alpha (e_\gamma^- e_\beta^+) - d_{\alpha \gamma}^\beta (e^\gamma_- e_\beta^+) \right) - i (\chi + S) e_\alpha^+ - i \lambda \eta_{,\alpha^+}^+~.
\label{eq:7.2.3}
\end{equation}
Similarly we compute 
\begin{multline}
{[e^\alpha_+}_\Lambda J^+] = \frac{i}{2} e^\beta_+ \left( d_{\beta \gamma}^\alpha e^\gamma_- - d^{\alpha \gamma}_\beta e_\gamma^- \right) - i \chi e^\alpha_+ + i (\lambda + T) \eta^{+, \alpha^+} = \\ \frac{i}{2} \left( d_{\beta\gamma}^\alpha (e^\beta_+ e^\gamma_-) - d^{\alpha \gamma}_\beta (e^\beta_+ e_\gamma^-) \right) - i \chi e^\alpha_+ + i (\lambda + T) \eta^{+,\alpha^+}
\label{eq:7.2.4}
\end{multline}
and using skew-symmetry:
\begin{equation}
{[J^+}_\Lambda e^\alpha_+]= \frac{i}{2} \left( d_{\beta\gamma}^\alpha (e^\beta_+ e^\gamma_-) - d^{\alpha \gamma}_\beta (e^\beta_+ e_\gamma^-) \right) + i (\chi + S) e^\alpha_+ - i \lambda \eta^{+, \alpha^+}~.
\label{eq:7.2.5}
\end{equation}
We also need
\begin{equation}
{[i T \eta^+}_\Lambda J^+] = \frac{\lambda}{2} \left( \eta^{+,\alpha^+} e_\alpha^+ - \eta^+_{,\alpha^+} e^\alpha_+ \right)~,
\label{eq:7.2.6}
\end{equation}
from where we get
\begin{equation}
{[J^+}_\Lambda i T\eta^+] = - \frac{1}{2} (\lambda + T) \left( \eta^{+,\alpha^+} e_\alpha^+ - \eta^+_{,\alpha^+} e^\alpha_+ \right)~.
\label{eq:7.2.7}
\end{equation}
We can now compute using the non-commutative Wick formula:
\begin{multline}
{[J^+}_\Lambda J^+] = - \frac{1}{4} \left( d^\alpha_{\beta \gamma} (e^\beta_+ e^\gamma_-) - d^{\alpha \gamma}_\beta (e^\beta_+ e_\gamma^-) \right) e_\alpha^+  + \frac{1}{4} e^\alpha_+ \left( d^{\beta \gamma}_\alpha (e_\gamma^- e_\beta) - d_{\alpha\gamma}^\beta (e^\gamma_- e_\beta^+) \right) \\ - \frac{1}{2} \Bigl((\chi + S) e^\alpha_+ \Bigr) e_\alpha^+ - \frac{1}{2} e^\alpha_+ (\chi + S) e_\alpha^+ + \frac{1}{2} \lambda \left( \eta^{+,\alpha^+} e_\alpha^+ - \eta^+_{,\alpha^+} e^\alpha_+ \right)\\ - \frac{1}{2} (\lambda + T) \left(\eta^{+,\alpha^+} e^+_\alpha - \eta_{,\alpha^+}^+ e^\alpha_+\right) \\   - \frac{1}{4} \int_0^\Lambda \Bigr[ d_{\beta \gamma}^\alpha (e^\beta_+ e^\gamma_-) - d^{\alpha \gamma}_\beta {(e^\beta_+ e_\gamma^-)}_\Gamma e_\alpha^+ \Bigr] d \Gamma - \frac{1}{2} \int_0^\Lambda \bigl[ {(\chi + S) e^\alpha_+}_\Gamma e_\alpha^+ \bigr] d\Gamma
\label{eq:7.2.8}
\end{multline}
Since the fields $e_\alpha^+$ and $e^\alpha_+$ are odd, we see that the $\chi$-terms vanish. The integral terms are easily evaluated:
\begin{equation}
 \frac{1}{2} \lambda \left( d_{\alpha\beta}^\alpha e^\beta_- - d^{\alpha \beta}_\alpha e_\beta^- \right) - \frac{1}{2} \lambda [e^\alpha_+, e_\alpha^+] - \frac{1}{2} \chi \lambda \dim M = - \frac{1}{2} \lambda \chi \dim M.
\label{eq:7.2.9}
\end{equation}
Therefore the $\lambda$-terms in \eqref{eq:7.2.8} vanish. Using quasi-associativity we can write the first two terms of \eqref{eq:7.2.8} as 
\begin{equation}
- \frac{1}{2} e^\beta_+ \bigl( e_\alpha^+ [e_\beta^+, e^\alpha_+] \bigr) - \frac{T}{2} \left( d^{\alpha \beta}_\alpha e_\beta^- - d^\alpha_{\alpha\beta} e^\beta_- \right)~.
\label{eq:7.2.10}
\end{equation}
Collecting terms and using quasi-commutativity we obtain
\begin{multline}
{[J^+}_\Lambda J^+] = - \frac{1}{4} \left( e^\alpha_+ \bigl( e_\beta^+ [e_\alpha^+, e^\beta_+] \bigr) + e_\alpha^+ \bigl( e^\beta_+ [e^\alpha_+, e_\beta^+] \bigr) \right) \\ - \frac{1}{2} \left( e^\alpha_+ Se_\alpha^+ + e_\alpha^+ Se^\alpha_+ \right) - \frac{1}{2} T \left( \eta^{+,\alpha^+} e_\alpha^+ - \eta^+_{,\alpha^+} e^\alpha_+ \right) - \frac{1}{2} \lambda\chi \dim M~.
\label{eq:7.2.11}
\end{multline}
Which in this frame coincides with the first equation of \eqref{eq:7.thm2}. A similar computation holds in the minus sector. The computation in the more general frame where $c_{\alpha \beta}^{\pm \gamma} \neq 0$ is similar. 
\item \textbf{The diagonal embedding.}\\
Since the manifold $(M, \mathcal{J}_1)$ is generalized Calabi-Yau, Theorem \ref{thm:7.thma} says that $J_1 = J^+ + J^-$ generates an $N=2$ superconformal vertex algebra of central charge $c = 3 \dim M$. The superconformal vector is given by
\eqref{eq:7.thma4}, 
where $\{e_i\}$ is a frame for $L_1$ and $\{e^i\}$ is the dual frame. We can consider the frame given by $\{e^+_\alpha\} \cup \{e^-_\alpha\}$ and their corresponding dual frames. Similarly the section $J_2 = J^+ - J^-$ generates another $N=2$ superconformal vertex algebra of central charge $c = 3 \dim M$. The superconformal vector is given by an expression like \eqref{eq:7.thma4} where we now use the frame for $L_2$ given by $\{e^+_\alpha\} \cup \{e^\alpha_-\}$. We want to show that this field $H$ is actually $H^+ + H^-$. 

Recall that we have chosen the section 
\[ e^{4 \Phi + \varphi^+ + \varphi^-} e^1_+ \wedge \dots \wedge e^n_+\wedge e^1_- \wedge \dots \wedge e^n_- = e^{\eta^+ + \eta^-} e^1_+ \wedge \dots \wedge e^n_+\wedge e^1_- \wedge \dots  \wedge e^n_-\] of $\det L_1^*$. In the frame $\{e^+_\alpha\} \cup \{e^-_\alpha\}$ for $L_1$, and using the expressions in \eqref{eq:b4.7} and \eqref{eq:b4.8} such that $c_{\alpha\beta}^{\pm \gamma} = 0$ we have:
\begin{multline}
H_1 = \frac{1}{4} \Bigl( e^\alpha_+ \bigl(e^\beta_- [e_\alpha^+, e_\beta^-] \bigr) + e^\alpha_- \bigl(e^\beta_+ [e_\alpha^-, e_\beta^+] \bigr) + e_\alpha^+ \bigl( e_\beta^- [e^\alpha_+, e^\beta_-] \bigr) \\ +  e_\alpha^- \bigl(e_\beta^+ [e_\alpha^-, e^\beta_-] \bigr) \Bigr)  - \frac{T}{2} \Bigl( d_{\alpha\beta}^\alpha e^\beta_- + d_{\alpha}^{\alpha \beta} e_\beta^- - e_{\beta \alpha}^\alpha e^\beta_+ - e^{\beta\alpha}_\alpha e_\beta^+ \Bigr)  \\ + \frac{1}{2} \left( e^\alpha_+ Se_\alpha^+ + e^\alpha_- Se_{\alpha^-} + e_\alpha^+ Se^\alpha_+ + e_\alpha^- Se^\alpha_- \right) - i T \mathcal{J}_1 \mathcal{D}  ( 4 \Phi + \varphi^+ + \varphi^-)  \label{eq:7.3.1}
\end{multline}
The first term can be written as 
\begin{equation}
 \frac{1}{2} \left( e^\alpha_+ \bigl(e_\beta^+ [e_\alpha^+,e^\beta_+] \bigr) + e^\alpha_- \bigl(e_\beta^- [e_\alpha^-, e^\beta_-] \bigr) \right),
\label{eq:7.3.2}
\end{equation}
while the second term is 
\begin{multline}
- \frac{T}{2} \left( \eta^{+, \beta^-} e_\beta^- - \eta^{+}_{,\beta^-} e^\beta_- + \eta^{-,\beta^+} e_\beta^+ - \eta^{-}_{,\beta^+} e^\beta_+ \right) = i T \mathcal{J}_1 \mathcal{D} \Bigl( \eta^+ + \eta^- \Bigr) + \\ \frac{T}{2} \Bigl( \eta^{+,\beta^+} e_\beta^+ - \eta^{+}_{,\beta^+} e^\beta_+ + \eta^{-,\beta^-} e_\beta^- - \eta^-_{,\beta^-} e^\beta_- \Bigr)
\label{eq:7.3.3}
\end{multline}
From where we easily see $H_1 = H^+ + H^-$. We can perform a similar computation using the global section 
\begin{equation*}
e^{\varphi^+ - \varphi^-} e^1_+ \wedge \dots \wedge e^n_+ \wedge e_1^- \wedge \dots \wedge  e_n^-  =
e^{\eta^+ - \eta^-} e^1_+ \wedge \dots \wedge e^n_+ \wedge e_1^- \wedge \dots \wedge  e_n^-
%\label{eq:7.3.4}
\end{equation*}
of $\det L_2^*$ and the frame $\{e_\alpha^+\} \cup \{e^\alpha_-\}$ of $L_2$ to obtain that $H_2 = H_1 = H^+ + H^-$. 
\item  \textbf{The $N=2,2$ algebra.}\\
The Theorem follows from the following Lemma that in turn is a application of the Jacobi identity for SUSY Lie conformal algebras \cite[Proof of Thm 6.2]{heluani8}:
\begin{lem}
Let $J^+$ and $J^-$ be two commuting superfields satisfying 
\begin{equation}
{[J^\pm}_\Lambda J^\pm] = - \left( H^\pm + \frac{c}{3} \lambda \chi \right), 
\label{eq:7.4.1}
\end{equation}
for some $c \in \mathbb{C}$ and some odd fields $H^\pm$. Let $H = H^+ + H^-$ and suppose moreover that both pairs  $(J^+ + J^-, H)$ and $(J^+ - J^-, H)$ generate the $N=2$ superconformal vertex algebra of central charge $2c$. Then the quadruple $J^\pm, H^\pm$ generates two commuting copies of the $N=2$ superconformal vertex algebra of central charge $c$. 
\label{lem:7.thmb1}
\end{lem}
\end{enumerate}
\end{proof}
\begin{ex}
In the usual Calabi-Yau case, these two commuting structures agree with the ones constructed in \cite{heluani8}. Indeed in this case we have that both hermitian complex structures agree $J_+ = J_- = J$ and we can therefore choose a holomorphic coordinate system for both $\{z^\alpha_\pm\} = \{z^\alpha\}$. Note also that we may work in the coordinates where the holomorphic volume form is constant.  In this coordinate system, the sections $H^\pm$ and $J^\pm$ can be written as:
\begin{equation}
\begin{aligned}
J_1 &:= J^+ + J^- = i SB^\alpha \Psi_\alpha - i SB^{\bar \alpha} \Psi_{\bar \alpha}, \\
J_2 &:= J^+ - J_- = \omega^{\alpha \bar \beta} \Psi_\alpha \Psi_{\bar \beta} + \omega_{\alpha \bar \beta} SB^\alpha SB^{\bar \beta}, \\
H &:=  H^+ + H^- = SB^i S\Psi_i + TB^i \Psi_i, \\
H'&:=  H^+ - H^- =  \left( \Gamma^{\alpha}_{\varepsilon \gamma} g^{\varepsilon \bar \beta} SB^\gamma \right) \left( \Psi_{\bar \beta} \Psi_\alpha \right) + g^{\alpha \bar \beta} S\Psi_\alpha \Psi_{\bar \beta}  \\ & \quad +\left( \Gamma_{\bar\varepsilon \bar \gamma}^{\bar \beta} g^{\alpha \bar \varepsilon} SB^{\bar \gamma} \right) \left( \Psi_\alpha \Psi_{\bar \beta} \right) + g^{\alpha \bar \beta} \Psi_\alpha S\Psi_{\bar \beta} \\ & \quad + g_{\alpha \bar \beta} TB^\alpha SB^{\bar \beta} + g_{\alpha \bar \beta} SB^\alpha TB^{\bar\beta}. 
\end{aligned}
\label{eq:alamierda}
\end{equation}
\label{ex:mio}
\end{ex}
\section{Half-twisted model} \label{sec:7}
In this section we study the \emph{topological twist} of the $N=2$ algebras described in the previous section. We first recall the double complex computing the generalized Hodge decomposition of a generalized K\"ahler manifold \cite{gualtieri3}.

Let $(M, \mathcal{J}_1, \mathcal{J}_2)$ be a generalized K\"ahler manifold. $\mathcal{J}_1$ induces a decomposition of forms into its eigenspaces \eqref{eq:ukn}. Since $\mathcal{J}_2$ also acts on $\wedge^\bullet T^*$ via the spin representation and commutes with the action of $\mathcal{J}_1$, $U_k$ in turn is decomposed as
\begin{equation}
U_k = U_{k,|k|-n}\oplus U_{k,|k|-n+2}\oplus \cdots \oplus
U_{k,n-|k|},
\label{eq:ukpq}
\end{equation}
where $U_{p,q}$ is the intersection of the $ip$-eigenspace of
$\mathcal{J}_1$ and the $iq$-eigenspace of $\mathcal{J}_2$. 

Recall that, with respect to $\mathcal{J}_1$, the (H-twisted) de Rham differential decomposes as $d_H = \partial_1 + \overline{\partial_1}$, and these differentials act on
\begin{equation}
\xymatrix{C^\infty(U_k)\ar@<0.5ex>[r]^{\overline{\partial}_1}&C^\infty(U_{k+1})\ar@<0.5ex>[l]^{\partial_1}}.
\label{eq:8.1}
\end{equation}
This decomposition is further refined by the action of $\mathcal{J}_2$ into 
\begin{equation}
d_H = \delta_+ + \delta_- + \overline{\delta}_+ + \overline{\delta}_-,
\label{eq:8.2}
\end{equation}
with these operators defined by:
\begin{equation}
\xymatrix{
U_{p-1,q+1} & &U_{p+1,q+1} \\
 &U_{p,q}\ar@{.>}[r]^{\overline{\partial}_1}\ar@{.>}[l]^{\partial_1}\ar@{.>}[u]^{\overline{\partial}_2}\ar@{.>}[d]^{\partial_2}\ar[lu]^{\delta_-}\ar[ru]^{\overline{\delta}_+}\ar[ld]^{\delta_+}\ar[rd]^{\overline{\delta}_-} & \\
 U_{p-1,q-1}& & U_{p+1,q-1}
}
\label{eq:8.3}
\end{equation}
where $\overline{\partial_1} = \overline{\delta_+} + \overline{\delta_-}$ and $\overline{\partial_2} = \overline{\delta_+} + \delta_-$. 
This decomposition implies:
\begin{prop}[generalized K\"ahler identities.]
For a generalized K\"ahler structure, we have the identities
\[
\overline{\delta}_+^* = -\delta_+\ \ \text{and}\ \ \overline{\delta}_-^* = \delta_-.
\]
\end{prop}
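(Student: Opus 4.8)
The plan is to deduce the identities from the existence of a positive-definite Hermitian inner product on forms induced by the generalized metric $G$, together with the behaviour of Clifford multiplication under the associated adjoint. First I would fix on $\wedge^\bullet T^* \otimes \mathbb{C}$ the Hermitian pairing $h(\varphi,\psi) = (\overline{\varphi}, \star_G \psi)$, where $\star_G$ is the generalized Hodge star determined by $G = -\mathcal{J}_1 \mathcal{J}_2$ and $(\,,\,)$ is the Mukai pairing; positive-definiteness of $h$ is exactly the statement that $G$ is positive definite. The first thing to record is that the bigrading $U_{p,q}$ of \eqref{eq:ukpq} is $h$-orthogonal, since $\mathcal{J}_1$ and $\mathcal{J}_2$ act $h$-skew-adjointly and diagonally, with distinct eigenvalues $ip$, $iq$, on the summands. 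Consequently the four components $\delta_\pm, \overline{\delta}_\pm$ of $d_H$, which by \eqref{eq:8.3} have pairwise distinct bidegree shifts, have unambiguous formal adjoints that may be read off componentwise from $d_H^*$.

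The second step is the key algebraic input: the adjoint of Clifford multiplication. Under the spinor action, $\delta_+, \overline{\delta}_+, \delta_-, \overline{\delta}_-$ are the parts of $d_H$ that Clifford-multiply (after covariant differentiation) by sections of $L_1^+, \overline{L_1^+}, L_1^-, \overline{L_1^-}$ respectively, these being the four summands of \eqref{eq:3.3}. I would then show that, with respect to $h$, the adjoint of Clifford multiplication by $v$ is Clifford multiplication by $\pm\, \overline{v}$, the sign being dictated by whether $v$ lies in $C_+$ (on which $G$ is positive definite) or in $C_-$ (on which $G$ is negative definite). Since $L_1^+ \oplus \overline{L_1^+} = C_+ \otimes \mathbb{C}$ and $L_1^- \oplus \overline{L_1^-} = C_- \otimes \mathbb{C}$ by \eqref{eq:3.4}, this is precisely the mechanism that will produce the opposite signs in the two identities.

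Combining these, I would write $d_H^* = -\,\star_G\, d_H\, \star_G$ for the twisted codifferential and expand both sides in the $U_{p,q}$ bigrading. Because $d_H$ is a real operator, complex conjugation interchanges $\delta_\pm \leftrightarrow \overline{\delta}_\pm$; feeding this together with the sign rule of the previous step into the componentwise matching of $d_H^*$ against the pieces of $d_H$ yields $\overline{\delta}_+^* = -\delta_+$ from the $C_+$ summand and $\overline{\delta}_-^* = \delta_-$ from the $C_-$ summand. Equivalently, and perhaps more transparently, one computes $h(\overline{\delta}_\pm \varphi, \psi)$ directly using $h(\varphi,\psi) = (\overline{\varphi},\star_G\psi)$, the transpose property of Clifford multiplication under the Mukai pairing, and the conjugation $\overline{\overline{\delta}_\pm} = \delta_\pm$, integrating by parts against $d_H$.

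The main obstacle is the careful bookkeeping of signs. The two identities differ only by a sign, and that difference is an honest consequence of $G|_{C_+} > 0$ versus $G|_{C_-} < 0$; tracking it requires pinning down three separate conventions simultaneously --- the transpose sign of Clifford multiplication in the Mukai pairing, the action of $\star_G$ on each bidegree $U_{p,q}$, and the sign in $d_H^* = -\star_G d_H \star_G$ --- and checking that they conspire to give $-\delta_+$ in the plus sector and $+\delta_-$ in the minus sector. The integrability of $\mathcal{J}_1$ and $\mathcal{J}_2$, equivalently the involutivity of each summand in \eqref{eq:3.3}, enters precisely to guarantee that $\delta_\pm, \overline{\delta}_\pm$ are genuine differential operators of pure bidegree, so that the componentwise reading of $d_H^*$ is legitimate; absent integrability the cross terms would obstruct the argument.
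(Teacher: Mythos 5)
There is an important mismatch of expectations here: the paper never proves this proposition. It is recalled verbatim from \cite{gualtieri3} (the phrase ``This decomposition implies'' plus the proposition is all the paper says), so there is no in-paper argument to compare against; what your sketch does is reconstruct the proof of the cited reference, and it does so along the correct lines. The ingredients you isolate are exactly the ones that make Gualtieri's argument work: the positive-definite Born--Infeld-type pairing built from the Mukai pairing and the generalized Hodge star of $G$; the $h$-orthogonality of the bigrading $U_{p,q}$; the identification of $\delta_+,\overline{\delta}_+,\delta_-,\overline{\delta}_-$ with the directional parts of $d_H$ along $L_1^+,\overline{L_1^+},L_1^-,\overline{L_1^-}$ (which is consistent with the bidegree shifts in \eqref{eq:8.3}); and, crucially, the sign rule for the adjoint of Clifford multiplication, one sign on $C_+\otimes\mathbb{C}$ and the opposite sign on $C_-\otimes\mathbb{C}$, which via \eqref{eq:3.4} is precisely what separates $\overline{\delta}_+^*=-\delta_+$ from $\overline{\delta}_-^*=\delta_-$. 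Your remark on integrability is also the right one: it is what reduces $d_H$ to components of pure bidegree $(\pm 1,\pm 1)$, so that reading off $d_H^*$ componentwise is legitimate.

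Two caveats you should address when fleshing this out. First, $\overline{\delta}_\pm$ are first-order operators, not Clifford multiplications, so the adjoint-of-Clifford-multiplication lemma does not by itself compute their adjoints; it must be combined with the Stokes-type identity for the Mukai pairing, $\int_M (d_H\alpha,\beta) = \pm\int_M(\alpha,d_H\beta)$ for compactly supported forms (this is what your ``integrating by parts against $d_H$'' has to mean, and it is also why the formal adjoint only exists after integrating the $\det T^*$-valued Mukai pairing over $M$). Relatedly, your orthogonality claim for $U_{p,q}$ via skew-adjointness of $\mathcal{J}_{1,2}$ is itself a consequence of the Clifford adjoint rule, so logically that lemma should come first. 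Second, which sector inherits the minus sign is genuinely convention-dependent (the transpose sign $(-1)^{k(k-1)/2}$ in the Mukai pairing, the definition of $\star_G$, the sign in $d_H^*=-\star_G d_H\star_G$); your plan flags this but does not resolve it, and resolving it against the paper's conventions is the only remaining work. With that bookkeeping carried out, the argument closes, and it is the same argument as in \cite{gualtieri3}.
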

Obtaining thus the following relation between all available Laplacians:
\begin{equation}
\Delta_{d_H} = 2 \Delta_{\overline{\partial}_{1/2}} = 2 \Delta_{\partial_{1/2}} = 4 \Delta_{\overline{\delta}_{\pm}} = 4 \Delta_{\delta_\pm}.
\label{eq:8.5}
\end{equation}
Note that by acting on $\rho_1$ we obtain an isomorphism of sheaves:
\begin{equation}
U_{p, q} \simeq \wedge^{r} \overline{L_1^+} \otimes \wedge^s \overline{L_1^-}, \quad p + q + n = 2 r, \quad p-q +n = 2s 
\label{eq:menostodavia}
\end{equation}
and similarly applying to $\rho_2$ we obtain:
\begin{equation}
U_{p,q} \simeq \wedge^r \overline{L_1^+} \otimes \wedge^s L_1^-, \qquad p+q+n= 2r, \quad q -p + n = 2 s~.
\label{eq:menostodavia2}
\end{equation}
In the generalized Calabi-Yau metric case, these isomorphisms are in fact isomorphisms of bi-complexes. Indeed since $L_1^\pm$ are closed under the Lie bracket, we have $d_{L^1} = d_{L_1^+} + d_{L_1^-}$. Since we know that under \eqref{eq:menostodavia} the differential $\overline{\partial_1}$ is mapped to $d_{L_1}$, by degree considerations we obtain $\overline{\delta_\pm}$ is mapped to $d_{L_1^\pm}$. We arrive to 
\begin{prop}
On a generalized Calabi-Yau metric manifold, the isomorphisms \eqref{eq:menostodavia} and \eqref{eq:menostodavia2} are isomorphisms of bicomplexes. The corresponding spectral sequences are degenerate at $E_2$ and, in the case of compact $M$, converge to the $H$-twisted de Rham cohomology of $M$. 
\label{prop:htwisteddeRham}
\end{prop}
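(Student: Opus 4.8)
The plan is to promote the sketch given just before the statement to a full argument in three stages: the identification of bicomplexes, degeneration at $E_2$, and identification of the abutment. I treat \eqref{eq:menostodavia} in detail; \eqref{eq:menostodavia2} is identical with $L_2 = L_1^+\oplus \overline{L_1^-}$ in place of $L_1$.

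\emph{Bicomplex structure.} Since $\rho_1$ is $d_H$-closed, Clifford action by $\rho_1$ intertwines $\overline{\partial}_1$ with the Lie algebroid differential $d_{L_1}$, exactly as in the identification \eqref{eq:isomalgebroid}. Both summands of $L_1 = L_1^+ \oplus L_1^-$ are Courant involutive, hence sub-Lie-algebroids, so the Chevalley--Eilenberg differential splits as $d_{L_1} = d_{L_1^+} + d_{L_1^-}$ with $d_{L_1^+}^2 = d_{L_1^-}^2 = 0$ and $d_{L_1^+} d_{L_1^-} + d_{L_1^-} d_{L_1^+} = 0$; this is exactly the double complex structure on $\wedge^\bullet \overline{L_1^+} \otimes \wedge^\bullet \overline{L_1^-}$. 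To match it with the decomposition \eqref{eq:8.3} I would track bidegrees: under $r = (p+q+n)/2$ and $s = (p-q+n)/2$, the shift $(p,q)\mapsto(p+1,q+1)$ effected by $\overline{\delta}_+$ becomes $(r,s)\mapsto(r+1,s)$, while $(p,q)\mapsto(p+1,q-1)$ effected by $\overline{\delta}_-$ becomes $(r,s)\mapsto(r,s+1)$. Since $\overline{\partial}_1 = \overline{\delta}_+ + \overline{\delta}_-$ maps to $d_{L_1} = d_{L_1^+} + d_{L_1^-}$, comparison of bidegrees forces $\overline{\delta}_+ \mapsto d_{L_1^+}$ and $\overline{\delta}_- \mapsto d_{L_1^-}$, so \eqref{eq:menostodavia} is an isomorphism of double complexes.

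\emph{Degeneration and abutment.} The total differential of this double complex is $\overline{\delta}_+ + \overline{\delta}_- = \overline{\partial}_1$, so each of its two filtrations gives a spectral sequence whose $E_1$ page is the cohomology with respect to one of $d_{L_1^\pm}$, whose $E_2$ page is the iterated cohomology $H_{d_{L_1^+}}\bigl(H_{d_{L_1^-}}(\,\cdot\,)\bigr)$, and whose abutment is the total cohomology $H^\bullet_{\overline{\partial}_1} = H^\bullet(L_1)$. To obtain degeneration and to identify this abutment I would run the compact K\"ahler argument verbatim, using the generalized K\"ahler identities $\overline{\delta}_+^* = -\delta_+$ and $\overline{\delta}_-^* = \delta_-$ together with the Laplacian identities \eqref{eq:8.5}. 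On compact $M$ these Laplacians are elliptic, and \eqref{eq:8.5} forces the spaces of $\overline{\delta}_+$-, $\overline{\delta}_-$- and $d_H$-harmonic forms to coincide; the common harmonic space is bigraded by $(r,s)$. A bigraded harmonic form is closed and coclosed for both $d_{L_1^+}$ and $d_{L_1^-}$, hence survives to the iterated-cohomology page without being hit by any differential, so the harmonic representatives fill out $E_2$ and no higher differential can act; a dimension count then yields $E_2 = E_\infty$. It is precisely because each $d_{L_1^\pm}$ is only ``half'' of $\overline{\partial}_1$ that one must pass to the second cohomology (rather than the first) before reaching the harmonic space, which is what makes the degeneration page $E_2$ and not $E_1$. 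Finally, since the $\Delta_{d_H}$-harmonic forms represent $H^\bullet_{d_H}(M)$, the abutment is the $H$-twisted de Rham cohomology, and one simultaneously reads off the generalized Hodge decomposition $H^\bullet_{d_H}(M) = \bigoplus_{r,s} H^{r,s}$.

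\emph{Main obstacle.} The identifications in the first stage are essentially bookkeeping once one notes that $L_1^\pm$ are sub-Lie-algebroids. The crux is the Hodge theory of the second stage: one must upgrade the coincidence of Laplacians \eqref{eq:8.5} to a genuine $\overline{\delta}_+\overline{\delta}_-$-lemma (every form that is $\overline{\delta}_+$-closed, $\overline{\delta}_-$-closed and exact for one of the two differentials is $\overline{\delta}_+\overline{\delta}_-$-exact), which is what powers both the $E_2$-degeneration and the identification of the abutment with $H^\bullet_{d_H}(M)$. This in turn requires that the adjoints and the harmonic theory be taken with respect to the metric induced by the generalized K\"ahler metric $G$ on the bundles $\overline{L_1^\pm}$, so that the identities \eqref{eq:8.5} apply without modification, and that $M$ be compact so that the elliptic package is available.
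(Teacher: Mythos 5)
Your proposal is correct, and its first stage is essentially the paper's own argument: the paper likewise notes that $L_1^\pm$ are closed under the bracket so that $d_{L_1}=d_{L_1^+}+d_{L_1^-}$, recalls that $\overline{\partial}_1$ corresponds to $d_{L_1}$ under \eqref{eq:menostodavia}, and concludes that $\overline{\delta}_\pm$ correspond to $d_{L_1^\pm}$ ``by degree considerations'' --- precisely your $(r,s)$ bookkeeping. The difference is in the second stage: the paper offers no argument of its own there, deferring degeneration and convergence to the Hodge theory of compact generalized K\"ahler manifolds in \cite{gualtieri3}, whereas you reconstruct that harmonic-theoretic proof. Your reconstruction is sound and is essentially what the cited reference does: ellipticity of $\Delta_{d_H}$ together with the proportionality \eqref{eq:8.5} identifies the pages and the abutment with the common bigraded space of harmonic forms, and a subquotient dimension count kills all differentials; compactness enters exactly where you say it does. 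What your route buys is self-containedness; what the paper's citation buys is brevity.

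Two claims in your write-up are off, though neither damages the proof. First, the assertion that one ``must pass to the second cohomology before reaching the harmonic space,'' so that degeneration occurs at $E_2$ ``and not $E_1$,'' is backwards: the $E_1$ page (the $\overline{\delta}_-$-cohomology at fixed $r$) is already the harmonic space, since its Laplacian equals $\tfrac{1}{4}\Delta_{d_H}$, which is elliptic and preserves the bigrading; and since harmonic forms are also $\overline{\delta}_+$-closed and $\overline{\delta}_+$-coclosed, $d_1$ vanishes on harmonic representatives, hence identically. So on compact $M$ the sequence in fact degenerates at $E_1$, which implies (and is consistent with) the weaker $E_2$-statement of the proposition. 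Second, the $\overline{\delta}_+\overline{\delta}_-$-lemma you single out as the crux is not a prerequisite: once every page is identified with harmonic forms, the dimension count you already invoke closes the argument; the lemma is a further consequence of \eqref{eq:8.5}, not an input. Finally, note that your degeneration argument, like the paper's citation, assumes $M$ compact, while the proposition's phrasing suggests degeneration without compactness --- a gap you share with the paper rather than introduce.
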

The last statement in the above proposition is true for the complex $U_{\bullet,\bullet}$ on any compact generalized K\"ahler manifold \cite{gualtieri3}.

We are now in position to study an \emph{affine generalization} of all of these complexes. For the moment we let $(M, \mathcal{J}_1, \mathcal{J}_2)$ be a generalized Calabi-Yau metric manifold. In the vertex algebra of global sections of $U^\mathrm{ch}(E)$ there are several bigradings corresponding to different choices of conformal vectors and $U(1)$ currents. In the previous sections, in order to accomodate supersymmetry, it was natural to consider the Virasoro field arising from the decomposition:
\begin{equation}
H(z,\theta) = H_1 = H^+ + H^- = \bigl( G^+(z) + G^{-}(z) \bigr) + 2 \theta L(z)~. 
\label{eq:8.6}
\end{equation}
With respect to this field, the basic fermions $e_\alpha^\pm$ and $e^\alpha_\pm$ are of conformal weight $1/2$. We may consider different $U(1)$ currents giving rise to different charge decompositions. 

We will now perform what is called a topological twist. This consists of changing the conformal weights and $U(1)$-charge of the fields by considering different Virasoro and $U(1)$-currents. Since we have two commuting copies of the $N=2$ superconformal algebra, we may perform this twisting in each sector (plus or minus) independently. For a given $N=2$ structure $(J,H)$  we may consider the operator 
\begin{equation}
L_0 = \frac{1}{2} \left( H_{(1|0)} \pm i J_{(0|1)} \right)~,
\label{eq:8.7}
\end{equation}
which acts diagonally and its eigenvalues will be called the conformal weights of the corresponding states. The eigenvalues of $J_0 := - i J_{(0|1)}$ will be called \emph{charge}. We obtain two possible twistings by choosing different signs in \eqref{eq:8.7}. 

Since we have two commuting copies of the $N=2$ superconformal algebra in $U^\mathrm{ch}(E)$, we may use either the same or different signs in the plus or minus sector. It is customary to call the former an $B$-twist, and the latter a $A$-twist.  Consider the operators 
\begin{equation}
\begin{aligned}
L^\pm_0 &:= \frac{1}{2} \left( H^\pm_{(1|0)} + i J^\pm_{(0|1)} \right) & J^\pm_0 &:= -i J^\pm_{(0|1)} \\ Q_0^\pm &:= \frac{1}{2} \left( H^\pm_{(0|1)} + i J^\pm_{(0|0)} \right) & G_0^\pm &:= \frac{1}{2} \left( H^\pm_{(0|1)} - i J^\pm_{(0|0)} \right)
\end{aligned}
\end{equation}

\begin{prop}
Consider the embedding $i: U_{\bullet,\bullet} \hookrightarrow U^\mathrm{ch}(E)$ obtained by composing \eqref{eq:menostodavia} with the obvious embedding  $\wedge^\bullet \overline{L_1} \hookrightarrow U^\mathrm{ch}(E)$. We have 
\begin{equation}
Q^\pm_0 \circ i = i \circ \overline{\delta_\pm}.
\label{eq:diff1}
\end{equation}
Similarly, consider the embedding $j: U_{\bullet,\bullet} \hookrightarrow \wedge^\bullet \overline{L_2} \subset U^\mathrm{ch}(E)$, we obtain
\begin{equation}
Q^+_0 \circ j = j \circ \overline{\delta_+}, \qquad G^-_0 \circ j = j \circ \delta_-
\label{eq:diff2}
\end{equation}
\label{prop:diff1}
\end{prop}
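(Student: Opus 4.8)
The plan is to reduce both identities to the fermionic generators and then to recognize the resulting operators as Lie algebroid differentials. As established just before Proposition~\ref{prop:htwisteddeRham}, under the isomorphism \eqref{eq:menostodavia} the operator $\overline{\delta_\pm}$ is identified with the Lie algebroid differential $d_{L_1^\pm}$, while under \eqref{eq:menostodavia2} one has $\overline{\delta_+}\leftrightarrow d_{L_1^+}$ and $\delta_-\leftrightarrow d_{\overline{L_1^-}}$, by degree considerations applied to $d_{L_2}=d_{L_1^+}+d_{\overline{L_1^-}}$ and $\overline{\partial_2}=\overline{\delta_+}+\delta_-$. Thus each right-hand side is an odd derivation of $\wedge^\bullet\overline{L_1}$ (resp.\ $\wedge^\bullet\overline{L_2}$) completely determined, through the Leibniz rule \eqref{eq:diffeerentialmult}, by its values on functions and on the degree-one generators $e^\alpha_\pm$. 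On the other side, $Q_0^\pm$ and $G_0^\pm$ are super-residues (zero modes) of the odd superfields built from $H^\pm$ and $J^\pm$, hence act as odd derivations of the normally ordered product (a consequence of the commutator formulas of Appendix~\ref{sec:appendixc}), and $i$, $j$ are morphisms of the underlying supercommutative algebras by Proposition~\ref{prop:universal}(1),(3). It therefore suffices to verify each identity on functions $i(f)$ and on single generators.

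First I would treat the action on functions. Using $J^\pm=\tfrac{i}{2}e^\alpha_\pm e_\alpha^\pm+iT\eta^\pm$ together with $[e^\pm_\alpha{}_\Lambda i(f)]=i(\pi(e_\alpha^\pm)f)$ from Proposition~\ref{prop:universal}(5), the non-commutative Wick formula gives a $\Lambda$-independent expression, so that $J^\pm_{(0|0)}i(f)$ is read off directly; the dilaton term $iT\eta^\pm$ contributes only descendants carrying a factor of $\lambda$, which drop out of the $(0|0)$-mode on a function. The mode $J^\pm_{(0|0)}i(f)$ thus contains both a holomorphic piece $\pi(e_\alpha^\pm)f\,e^\alpha_\pm$ and an anti-holomorphic piece $\pi(e^\alpha_\pm)f\,e_\alpha^\pm$; the combination $Q_0^\pm=\tfrac12(H^\pm_{(0|1)}+iJ^\pm_{(0|0)})$ is arranged so that the anti-holomorphic piece cancels against the contribution of $H^\pm_{(0|1)}i(f)$, leaving $Q_0^\pm i(f)=(\pi(e_\alpha^\pm)f)\,e^\alpha_\pm=d_{L_1^\pm}f$, and similarly $G_0^- i(f)=d_{\overline{L_1^-}}f$ in the $j$-picture. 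This matches \eqref{eq:diff1} and \eqref{eq:diff2} on functions.

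The main computation is the action on the fermionic generators, for which I would reuse the $\Lambda$-brackets already assembled in the proof of Theorem~\ref{thm:1}: equations \eqref{eq:7.2.3}, \eqref{eq:7.2.5} for $[J^+{}_\Lambda e_\alpha^+]$ and $[J^+{}_\Lambda e^\alpha_+]$, and \eqref{eq:7.3}, \eqref{eq:7.5} for the mixed brackets, together with their minus-sector analogues and the corresponding brackets with $H^\pm$. Extracting $J^\pm_{(0|0)}$ (the $\lambda$- and $\chi$-independent part) and $H^\pm_{(0|1)}$ (the coefficient of $\chi$ at $\lambda=0$) and forming the combinations defining $Q_0^\pm$ and $G_0^-$, one sees the structure functions $c,d,e$ of \eqref{eq:6.5} reappear precisely as the coefficients prescribed by \eqref{eq:diffeerentialmult} for the relevant Lie algebroid. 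Here the dilaton corrections in $J^\pm$ and $H^\pm$ are essential: together with the trace identities \eqref{eq:6.13}, \eqref{eq:6.14} of Section~\ref{sec:5} they supply exactly the unimodularity-corrected structure constants entering $d_{L_1^\pm}$, and they force the spurious descendant terms—such as the $Se^\alpha_+$ produced by the $(\chi+S)e^\alpha_+$ contribution to \eqref{eq:7.2.5}—to cancel between $iJ^\pm_{(0|0)}$ and $H^\pm_{(0|1)}$, so that $Q_0^\pm$ and $G_0^-$ genuinely preserve the space of forms.

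The hardest part will be this last step: correctly extracting the zero modes from the quadratic normally ordered expressions, tracking the $S$-derivative and dilaton contributions, and confirming that every non-form descendant term cancels so that the surviving operator is exactly the Lie algebroid differential with its divergence-corrected structure functions. The sign choices distinguishing $Q_0^\pm$ from $G_0^\pm$ are what select $\overline{\delta_-}$ versus $\delta_-$ (equivalently $L_1^-$ versus $\overline{L_1^-}$ in the embedding), so the bidegree bookkeeping of \eqref{eq:ukpq}--\eqref{eq:8.3} must be matched carefully against the mode computation; this conceptual matching, rather than any single algebraic manipulation, is the real obstacle.
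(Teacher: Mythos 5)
Your strategy---treat both sides as odd derivations and check them on functions and on the degree-one generators---is not the paper's argument, and two of its load-bearing claims are problematic. The most serious is your account of the dilaton terms. Every dilaton-dependent correction in $J^\pm$ and $H^\pm$ is of the form $T(\cdot)$ (namely $iT\eta^\pm$, $-iT\mathcal{J}_\pm\mathcal{D}\eta^\pm$, and $-\tfrac{i}{2}T\mathcal{J}_\pm[e^\alpha_\pm,e_\alpha^\pm]$), and sesquilinearity gives $[Ta_\Lambda b]=-\lambda[a_\Lambda b]$, so such terms contribute only $\lambda$-multiples to any $\Lambda$-bracket and therefore vanish identically in the modes $J^\pm_{(0|0)}$ and $H^\pm_{(0|1)}$ that enter $Q^\pm_0$ and $G^\pm_0$. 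Consequently the dilaton and the trace identities \eqref{eq:6.13}--\eqref{eq:6.14} neither supply ``unimodularity-corrected structure constants'' (the Lie algebroid differential \eqref{eq:diffeerentialmult} carries no such correction) nor force the cancellation of the $Se_\alpha^+$-type descendants; that cancellation takes place between the dilaton-independent parts of $H^\pm_{(0|1)}$ and $iJ^\pm_{(0|0)}$. Indeed, the insensitivity of the zero modes to these corrections is precisely why the paper can define $Q^\pm_0$, $G^\pm_0$ on an arbitrary generalized K\"ahler manifold (see the paragraph preceding Definition \ref{defn:8.1}), where no global spinor or dilaton exists; if you ran your computation expecting dilaton contributions, you would be chasing terms that are not there. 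Beyond this, the computation you defer is genuinely missing: the paper never computes $[H^\pm{}_\Lambda e_\alpha^\pm]$ or $[H^\pm{}_\Lambda e^\alpha_\pm]$ (only $[J^\pm{}_\Lambda J^\pm]$ is computed), so the $H$-half of your zero-mode extraction must be produced from scratch, e.g.\ via the Jacobi identity; and your reduction to generators tacitly assumes that $i$ intertwines the wedge product with the normally ordered product, which is true but needs the observation that for sections of the isotropic, involutive bundle $\overline{L_1}$ all $\chi$-terms and quasi-commutativity/quasi-associativity corrections vanish---Proposition \ref{prop:universal}(1),(3) alone does not give this.

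For comparison, the paper's proof needs none of this machinery. Since $(M,\mathcal{J}_1)$ is generalized Calabi--Yau, \eqref{eq:isomalgebroid} is an isomorphism of complexes, and the paper cites from \cite{heluani9} the single computation that the diagonal zero mode $Q^+_0+Q^-_0$ equals $d_{L_1}$ on $\wedge^\bullet\overline{L_1}$. The bigrading of $\wedge^\bullet\overline{L_1^+}\otimes\wedge^\bullet\overline{L_1^-}$ coincides with the bigrading by $J^\pm_0$-eigenvalues, $Q^\pm_0$ raises the $\overline{L_1^\pm}$-degree by one while preserving the other, and $d_{L_1}=d_{L_1^+}+d_{L_1^-}$ with $d_{L_1^\pm}$ corresponding to $\overline{\delta_\pm}$; splitting the known diagonal identity into its bidegree components yields \eqref{eq:diff1} at once, and \eqref{eq:diff2} follows verbatim with $\mathcal{J}_2$ in place of $\mathcal{J}_1$. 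This degree-splitting of an already established identity is the idea your proposal lacks, and it is what reduces the step you call ``the real obstacle'' to mere bookkeeping.
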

\begin{proof}
Since $(M, \mathcal{J}_1)$ is generalized Calabi-Yau, the map \eqref{eq:isomalgebroid} is an isomorphism of complexes. The differential $\overline{\partial_1} = \overline{\delta_+} + \overline{\delta_-}$ is mapped therefore to $d_{L^1}$. A simple computation \cite{heluani9} shows that 
$Q^B_0 := Q^+_0 + Q^-_0$ equals $d_{L^1}$ when restricted to $\wedge^\bullet \overline{L_1} = \wedge^\bullet \overline{L_1^+} \otimes \wedge^\bullet \overline{L_1^-}$. The latter bigrading coincides with the one by eigenvalues of $J^\pm_0$. Since $Q^\pm_0$ increases the degree of the $\wedge^\bullet \overline{L_1^\pm}$ component, we obtain \eqref{eq:diff1} by degree considerations. \eqref{eq:diff2} follows in the same way by using $\mathcal{J}_2$ in place of $\mathcal{J}_1$. 
\end{proof}
\begin{rem}
Note that by considering the complex conjugate embeddings $\bar{i}$ and $\bar{j}$ we would obtain
\begin{equation}
G^\pm_0 \circ \bar{i} = \bar{i} \circ \delta_\pm, \qquad Q^-_0 \circ \bar{j} = \bar{j} \circ \overline{\delta_-} \qquad G^+_0 \circ \bar{j} = \bar{j} \circ \delta_+
\label{eq:diff3}
\end{equation}
\label{rem:diff2}
\end{rem}
\begin{rem}
The identification of the operators $Q^\pm$, $G^\pm$ with the differentials $\overline{\delta_\pm}$ and $\delta_\pm$ under certain embeddings of differential forms into the chiral-anti-chiral de Rham complex, shows that the full infinite dimensional Lie algebra given by the Fourier modes of the superfields $H^\pm, J^\pm$ should be viewed as an affine, or superconformal version of the generalized K\"ahler identities in the generalized Calabi-Yau case.
\label{rem:infinite}
\end{rem}

Note that in order to define the bigradings by eigenvalues of $L^\pm_0$ and $J^\pm_0$ and  the differentials $Q^\pm_0$ and $G^\pm_0$, we only need the zero modes of the fields $H^\pm, J^\pm$ and not the full superconformal algebra. It is easy to see that these zero modes are well defined on any generalized K\"ahler manifold since the quantum corrections involve derivatives of fields \eqref{eq:4.2}. We can therefore define:
\begin{defn}
Let $(M, \mathcal{J}_1, \mathcal{J}_2)$ be a generalized K\"ahler manifold. We define the \emph{chiral de Rham complex} of $M$ as the sheaf of super-vertex algebras given by the cohomology of the chiral-anti-chiral de Rham complex:
\begin{equation}
\Omega^\mathrm{ch}_M := \mathscr{H}^* \left( U^\mathrm{ch}(E), Q_0^+ \right).
\label{eq:8.8}
\end{equation}
\label{defn:8.1}
\end{defn}
The sheaf $\Omega^\mathrm{ch}_M$ is to $U^{\mathrm{ch}}(E)$ what the \emph{holomorphic chiral de Rham complex} of \cite{malikov} is to the \emph{smooth} chiral de Rham complex. In particular, we can give a description in terms of generators and relations just as in the usual case of \cite{malikov}. Recall that for a complex manifold $M$ we may define a holomorphic Courant algebroid $E$ just as in the smooth setting. The definition of $U^\mathrm{ch}(E)$ carries over without change to this case. In particular, when $E = T_{\mathbb{C}} \oplus T^*_{\mathbb{C}}$ is the standard Courant algebroid we obtain the usual holomorphic chiral de Rham complex. 

For a generalized complex manifold $M$, there is no such a notion as \emph{holomorphic tangent bundle} $T_\mathbb{C}$, so a priori there is no obvious way of constructing the analog of the holomorphic chiral de Rham complex. However, for a generalized K\"ahler manifold $M$ we have the following analog of $T_\mathbb{C}$. As mentioned above, since $L_1^\pm$ are closed under the Lie bracket, we may decompose $d_{L_1} = d_{L_1^+} + d_{L_1^-}$. We have the bicomplex
\begin{equation}
\wedge^{p,q} \overline{L_1} := \wedge^p \overline{L_1^+} \otimes \wedge^q \overline{L_1^-}
\label{eq:9.1}
\end{equation}
Define 
\begin{equation}
\Omega^q = \mathrm{ker} d_{L_1^+}: \wedge^{0,q} \overline{L_1} \rightarrow \wedge^{1,q} \overline{L_1}~.
\label{eq:9.2}
\end{equation}
A variation of the standard $\overline{\partial}$-lemma gives 
\begin{prop}
The following is a resolution of $\Omega^q$:
\begin{equation}
 \Omega^q \rightarrow \wedge^{0,q} \overline{L_1} \xrightarrow{d_{L_1^+}} \wedge^{1,q} \overline{L_1} \xrightarrow{d_{L_1^+}} \wedge^{2,q} \overline{L_1} \xrightarrow{d_{L_1^+}} \dots
\label{eq:9.3}
\end{equation}
\label{prop:}
\end{prop}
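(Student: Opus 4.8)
The plan is to prove that \eqref{eq:9.3} is a fine (acyclic) resolution, i.e. that the complex of sheaves $(\wedge^{\bullet,q}\overline{L_1}, d_{L_1^+})$ is exact in positive $\overline{L_1^+}$-degree, the term $\Omega^q$ appearing in degree zero as the kernel of $d_{L_1^+}$ by its very definition \eqref{eq:9.2}. Since each $\wedge^{p,q}\overline{L_1} = \wedge^p\overline{L_1^+}\otimes\wedge^q\overline{L_1^-}$ is the sheaf of smooth sections of a vector bundle, hence a fine sheaf admitting partitions of unity, exactness of this complex is a purely local question; thus it suffices to prove a Poincar\'e-type lemma, namely that every $d_{L_1^+}$-closed local section of $\wedge^{p,q}\overline{L_1}$ with $p\ge 1$ is locally $d_{L_1^+}$-exact. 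First I would record that $d_{L_1^+}^2=0$: this follows from $d_{L_1}^2=0$ together with the decomposition $d_{L_1}=d_{L_1^+}+d_{L_1^-}$ into anticommuting square-zero pieces, which is legitimate because $L_1^+$ and $L_1^-$ are Courant-involutive, each summand of \eqref{eq:3.3} being involutive.

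Next I would set up the local model. By Proposition \ref{prop:b4.1} the complex structure $J_+$ is integrable, and the anchor gives a Lie algebroid isomorphism $\pi\colon L_1^+\xrightarrow{\sim}T^{1,0}_+$ as in \eqref{eq:b4.2}. Fixing $J_+$-holomorphic coordinates $z^\alpha_+$ and the adapted frames $e^+_\alpha,\,e^\alpha_+$ of \eqref{eq:b4.7}--\eqref{eq:b4.8}, I would compute the symbol of $d_{L_1^+}$: on a function $f$ one has $d_{L_1^+}f=\sum_\alpha \pi(e^+_\alpha)(f)\,e^\alpha_+$ with $\pi(e^+_\alpha)=\partial_{z^\alpha_+}$, so that, modulo zeroth-order structure-function terms, $d_{L_1^+}$ coincides with the Dolbeault-type operator $\sum_\alpha e^\alpha_+\wedge\partial_{z^\alpha_+}$ of the integrable structure $J_+$, the factor $\wedge^q\overline{L_1^-}$ entering only as coefficients. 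Concretely, under $\pi$ the complex $(\wedge^{\bullet,q}\overline{L_1}, d_{L_1^+})$ is the relative Dolbeault complex of the holomorphic tangent algebroid $T^{1,0}_+$ with values in the $L_1^+$-module $\wedge^q\overline{L_1^-}$.

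The conclusion then follows from the Dolbeault--Grothendieck lemma (equivalently its complex conjugate, the $\partial$-Poincar\'e lemma) applied to this operator. The coefficient bundle $\wedge^q\overline{L_1^-}$ is a module over the holomorphic tangent algebroid, hence carries a flat relative connection and may be trivialized locally by a flat frame (Newlander--Nirenberg together with the holomorphic Poincar\'e lemma); in such a frame the structure-function terms disappear and the complex becomes a direct sum of copies of the scalar Dolbeault complex, which is exact in positive degree. This yields local exactness of \eqref{eq:9.3} at every $p\ge 1$, and since the only surviving cohomology sits in degree zero and equals $\Omega^q=\ker d_{L_1^+}$, the sequence \eqref{eq:9.3} is a resolution.

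The main obstacle is the correct handling of the twisting: one must verify that $\wedge^q\overline{L_1^-}$ is genuinely an $L_1^+$-representation --- which is exactly the content of the bicomplex statement $d_{L_1}=d_{L_1^+}+d_{L_1^-}$, itself resting on the involutivity of the eigenbundles in \eqref{eq:3.3} --- so that the zeroth-order terms in $d_{L_1^+}$ can be absorbed into a flat local frame and the problem reduces to the scalar $\overline\partial$-Poincar\'e lemma. Everything else is the standard Dolbeault--Grothendieck argument transported along the anchor isomorphism \eqref{eq:b4.2}.
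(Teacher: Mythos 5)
Your proof is correct and takes essentially the same approach as the paper, whose entire argument for this Proposition is the phrase ``a variation of the standard $\overline{\partial}$-lemma'': you have simply spelled that variation out, transporting $d_{L_1^+}$ along the anchor isomorphism $\pi\colon L_1^+\xrightarrow{\sim} T^{1,0}_+$, trivializing the coefficient module $\wedge^q\overline{L_1^-}$ by a local flat frame (the relevant statement is the Koszul--Malgrange theorem, for which the Newlander--Nirenberg route you indicate is a standard proof), and applying the scalar Dolbeault--Grothendieck lemma. Since the degree-zero kernel is $\Omega^q$ by definition \eqref{eq:9.2}, this is exactly the argument the paper intends.
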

Similarly we may define $\Theta^q:= \mathrm{ker} \,d^*_{L_1^+}: \wedge^{0,q} L_1 \rightarrow \wedge^{1,q} L_1$. We have by restriction a non-degenerate pairing
\begin{equation}
\Theta^q \otimes \Omega^q \rightarrow \mathscr{O} := \Omega^0 \simeq \Theta^0 \subset C^\infty (M).
\label{eq:9.4}
\end{equation}
The sheaf $\mathscr{O}$ is a sheaf of rings on $M$ and it will play the role of the structure sheaf on a generalized K\"ahler manifold. The sheaves $\Theta^p$ and $\Omega^p$ are sheaves of $\mathscr{O}$-modules and  \eqref{eq:9.4} is $\mathscr{O}$-bilinear.  The sheaf $\mathcal{E} := \Theta^1 \oplus \Omega^1$ will play the role of the holomorphic Courant algebroid. In particular, we have by restriction of the usual operations a differential $\mathcal{D}:\mathscr{O} \rightarrow \mathcal{E}$, and we can define as always actions of of sections of $\mathcal{E}$ into sections of $\mathscr{O}$ by\footnote{This definition seems circular, but it agrees with the restriction of the usual smooth action.}
\begin{equation}
\pi(X) \cdot f := 2 \langle X, \mathcal{D} f \rangle
\label{eq:9.5}
\end{equation}
Finally note that we have a well defined Dorfman bracket $[\cdot,\cdot]: \mathcal{E} \otimes \mathcal{E} \rightarrow \mathcal{E}$, and that we can define the Courant bracket by \eqref{eq:dorf}.  
The following is straightforward
\begin{prop}
The data $(\mathcal{E}, \mathcal{D}, \langle\cdot, \cdot \rangle)$ satisfies (2)-(5) of Definition \ref{defn:1}. 
\end{prop}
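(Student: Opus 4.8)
The plan is to deduce axioms (2)--(5) formally from the observation that $(\mathcal{E}, \mathcal{D}, \langle\cdot,\cdot\rangle, \pi, [\cdot,\cdot])$ is nothing but the restriction, to the subsheaves $\mathcal{E}\subset E\otimes\mathbb{C}$ and $\mathscr{O}\subset C^\infty(M)\otimes\mathbb{C}$, of the corresponding structures on the ambient smooth complex Courant algebroid. Since the latter satisfies all five axioms of Definition \ref{defn:1}, each of the identities (2)--(5) already holds when its arguments are viewed as smooth sections; what must be verified is only that, for arguments in $\mathcal{E}$ and $\mathscr{O}$, every term appearing on either side of these identities again lands in the appropriate subsheaf, so that the ambient identity descends verbatim.

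Accordingly, the first step is to record the four closure statements on which everything rests: (i) the pairing restricts to $\langle\cdot,\cdot\rangle\colon\mathcal{E}\otimes\mathcal{E}\to\mathscr{O}$, which is exactly the non-degenerate pairing \eqref{eq:9.4} specialized to $q=1$ (the $\Theta^1$--$\Theta^1$ and $\Omega^1$--$\Omega^1$ contributions vanish by isotropy of $L_1$); (ii) the differential restricts to $\mathcal{D}\colon\mathscr{O}\to\mathcal{E}$; (iii) the Dorfman bracket restricts to $[\cdot,\cdot]\colon\mathcal{E}\otimes\mathcal{E}\to\mathcal{E}$, as already observed above; and (iv) $\mathcal{E}$ is an $\mathscr{O}$-module and $\pi(X)f\in\mathscr{O}$ for $X\in\mathcal{E}$, $f\in\mathscr{O}$. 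Statements (i)--(iii) have been set up in the paragraphs preceding the Proposition; they all follow from the Courant-involutivity of $L_1^\pm$, which guarantees that $d_{L_1^+}$ and $d^*_{L_1^+}$ are honest differentials whose kernels $\Omega^1,\Theta^1,\mathscr{O}$ are stable under the relevant operations, together with the resolution given by the generalized $\overline{\partial}$-lemma stated just above. The module statement in (iv) is immediate from the $\mathscr{O}$-bilinearity of \eqref{eq:9.4}.

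With the closure in hand, the axioms are checked one at a time, each reducing to its ambient counterpart. For (3), given $A,B\in\mathcal{E}$ and $f\in\mathscr{O}$ we have $fB\in\mathcal{E}$ and $\pi(A)f\in\mathscr{O}$, so both sides of $[A,fB]=f[A,B]+(\pi(A)f)B$ lie in $\mathcal{E}$ and the identity is the ambient Leibniz rule restricted; here one must first verify the claim of the footnote to \eqref{eq:9.5}, namely that the anchor action defined by $\pi(X)\cdot f=2\langle X,\mathcal{D}f\rangle$ coincides with the restriction of the smooth anchor action, which is what removes the apparent circularity and licenses the use of the ambient identity. For (4), the ambient relation $[B,C]+[C,B]=\mathcal{D}\langle B,C\rangle$ has right-hand side $\mathcal{D}$ of an element of $\mathscr{O}$ by (i), hence a section of $\mathcal{E}$ by (ii), matching the left-hand side which lies in $\mathcal{E}$ by (iii). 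Axiom (5) restricts likewise, since all brackets stay in $\mathcal{E}$, all pairings in $\mathscr{O}$, and $\pi(A)$ preserves $\mathscr{O}$; and the Leibniz/Jacobi identity (2) is a formal consequence of the ambient Jacobi identity once bracket closure (iii) is known.

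The only genuine content beyond this bookkeeping lies in the closure properties themselves, and I expect the subtlest point to be the compatibility asserted in the footnote to \eqref{eq:9.5}: that the intrinsically defined anchor action agrees with the restriction of the smooth one. Once this is settled, the stability of $\Omega^1$, $\Theta^1$ and $\mathscr{O}$ under the Dorfman bracket and under $\mathcal{D}$ follows from the involutivity of $L_1^\pm$ exactly as in the classical Lie bialgebroid picture, and axioms (2)--(5) drop out with no further computation. I would emphasize that axiom (1) is deliberately excluded: with no holomorphic tangent bundle available there is no target for an honest anchor map $\mathcal{E}\to T$, so $\pi([A,B])=[\pi(A),\pi(B)]$ can only be read as an identity of derivations on $\mathscr{O}$ and is not part of the present statement.
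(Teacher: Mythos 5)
Your proposal is correct and is precisely the argument the paper leaves implicit when it declares the Proposition ``straightforward'': the operations on $(\mathcal{E},\mathcal{D},\langle\cdot,\cdot\rangle)$ are restrictions of the ambient smooth Courant algebroid structure, so once the closure statements (pairing into $\mathscr{O}$, bracket and $\mathcal{D}$ into $\mathcal{E}$, anchor action preserving $\mathscr{O}$) are in place -- as the paper sets up in the paragraphs preceding the Proposition -- the identities (2)--(5) descend verbatim. Your observation that the intrinsic anchor \eqref{eq:9.5} agrees with the restricted smooth one is in fact immediate from the ambient defining property $\langle\mathcal{D}f,X\rangle=\tfrac{1}{2}\pi(X)f$, so no extra work is hidden there.
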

We now arrive to the following description of $\Omega^\mathrm{ch}_M$. 
\begin{thm}
Let $(M, \mathcal{J}_1, \mathcal{J}_2)$ be a generalized K\"ahler manifold and let $(\mathcal{E}, \mathcal{D}, \mathscr{O})$ be as above. The sheaf $\Omega(E)$ is the sheaf of SUSY vertex algebras generated by functions $i : \mathscr{O} \hookrightarrow \Omega^\mathrm{ch}_M$ and sections of $\mathcal{E}$ declared to be odd, $j:\Pi\mathcal{E} \hookrightarrow \Omega^\mathrm{ch}_M$ subject to the relations  (1)-(5) of Proposition \ref{prop:universal}. 
\label{thm:holomorphiccdr}
\end{thm}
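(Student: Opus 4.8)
The plan is to realize $\Omega^{\mathrm{ch}}_M$ as the universal sheaf of SUSY vertex algebras attached to the ``holomorphic Courant algebroid'' $\mathcal{E}$ by exactly the construction of Proposition \ref{prop:universal}, applied now to the data $(\mathcal{E},\mathcal{D},\mathscr{O})$ which, by the Proposition immediately preceding the theorem, already satisfies axioms (2)--(5) of Definition \ref{defn:1}. First I would record that $Q_0^+$, being the zero mode of the odd field produced by twisting the $N=2$ structure $(J^+,H^+)$, is an odd derivation of both the $\Lambda$-bracket and the normally ordered product, and that $(Q_0^+)^2=0$ follows from the $N=2$ relations \eqref{eq:7.thm2} after the topological twist. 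Hence $\Omega^{\mathrm{ch}}_M=\mathscr{H}^*(U^{\mathrm{ch}}(E),Q_0^+)$ is itself a sheaf of SUSY vertex algebras, and the operations $[\cdot_\Lambda\cdot]$ and the normally ordered product descend to cohomology.

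Next I would pin down the generators and check the relations. By Proposition \ref{prop:diff1} and Remark \ref{rem:diff2}, under the embeddings of differential forms into $U^{\mathrm{ch}}(E)$ the twisted differentials $Q_0^+$ and $G_0^+$ restrict to the geometric operators $d_{L_1^+}$ and its partner on the relevant exterior bundles. Computing the low-weight part of the cohomology this way identifies the weight-zero $Q_0^+$-cohomology with $\ker d_{L_1^+}=\mathscr{O}=\Omega^0$ and the weight-$\tfrac12$ cohomology with $\mathcal{E}=\Theta^1\oplus\Omega^1$, so that $i$ and $j$ of Proposition \ref{prop:universal} descend to maps $i:\mathscr{O}\hookrightarrow\Omega^{\mathrm{ch}}_M$ and $j:\Pi\mathcal{E}\hookrightarrow\Omega^{\mathrm{ch}}_M$. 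The relations (1)--(5) then hold in cohomology because they hold in $U^{\mathrm{ch}}(E)$ and restrict to the sub-data $\mathscr{O},\mathcal{E}$, using the preceding Proposition to guarantee that the bracket, the pairing and $\mathcal{D}$ close on $\mathcal{E},\mathscr{O}$. The universal property of the SUSY vertex algebra defined by these generators and relations yields a canonical morphism $\Phi$ from that universal object $U^{\mathrm{ch}}(\mathcal{E})$ to $\Omega^{\mathrm{ch}}_M$.

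The real content, and the main obstacle, is to prove that $\Phi$ is an isomorphism, which is a local statement. I would equip $U^{\mathrm{ch}}(E)$ with its standard good (PBW / conformal-weight) filtration, under which $Q_0^+$ is filtered and the associated graded differential is a Koszul-type operator built weight-by-weight from $d_{L_1^+}$. The computation of $\mathrm{gr}\,\Omega^{\mathrm{ch}}_M$ then reduces, at each fixed conformal weight, to the cohomology of $d_{L_1^+}$ on the relevant exterior and symmetric bundles, and here the variation of the $\overline{\partial}$-Poincar\'e lemma recorded in Proposition \ref{prop:} (the resolution \eqref{eq:9.3} of $\Omega^q$) shows that the only surviving classes are those generated by $\mathscr{O}$ and $\mathcal{E}$, every ``non-holomorphic'' free field being paired off Koszul-style with its differential. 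This identifies $\mathrm{gr}\,\Phi$ with an isomorphism, and a filtration-completeness argument upgrades it to $\Phi$ itself.

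I expect the delicate points to be the weight-by-weight control needed because the fibers are infinite dimensional (so that an isomorphism on associated graded does give one on the nose only after checking each finite-rank graded piece), and the verification that the induced Koszul differential on $\mathrm{gr}$ is exactly the one computed by \eqref{eq:9.3}, i.e.\ that the higher Fourier modes contribute only acyclic complexes. The usual K\"ahler/Calabi-Yau case of \cite{malikov}, \cite{witten}, \cite{kapustin} is recovered by setting $J_+=J_-$, where $\mathcal{E}$ becomes the holomorphic Courant algebroid $T_{\mathbb{C}}\oplus T^*_{\mathbb{C}}$ and $\mathscr{O}$ the sheaf of holomorphic functions; this provides both a consistency check and the template for the local model computation.
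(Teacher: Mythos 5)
Your first half coincides with the paper's own proof: both construct the universal sheaf generated by $\mathscr{O}$ and $\Pi\mathcal{E}$ subject to relations (1)--(5), observe that the relations make sense and hold inside $U^\mathrm{ch}(E)$ because the data $(\mathcal{E},\mathcal{D},\mathscr{O})$ is closed under the Courant operations, note that the image lies in $\ker Q_0^+$ (and $\ker J_0^+$), and so obtain a canonical map to the cohomology. Where you genuinely diverge is the isomorphism step. You propose an all-conformal-weight filtration of $U^\mathrm{ch}(E)$ whose associated graded differential is of Koszul type, invoke the resolution \eqref{eq:9.3}, and finish by a filtration-completeness argument. The paper instead makes two moves that eliminate the higher Fourier modes before any computation: from $[G_0^+,Q_0^+]=L_0^+$ (equation \eqref{eq:9.6}), any $Q_0^+$-closed state of nonzero $L_0^+$-weight $h$ is exact via the contracting homotopy $h^{-1}G_0^+$, so the cohomology is concentrated in $U^\mathrm{ch}(E)_0=\ker L_0^+$; on that subspace the charge $J_0^+$ is non-negative, and locally $U^\mathrm{ch}(E)_0\cong\tilde{\Omega}^\mathrm{ch}_M\otimes\overline{\Omega}^\bullet$ with $Q_0^+$ acting as $1\otimes d_{L_1^+}$, where $(\overline{\Omega}^\bullet,d_{L_1^+})$ defined in \eqref{eq:9.8} --- a different resolution from the one \eqref{eq:9.3} you cite --- resolves $\mathbb{C}$. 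The result then follows from a finite-rank local computation.

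This difference is not cosmetic: the ``delicate points'' you flag are exactly what the paper's trick disposes of. In your scheme one still has to prove that every higher-mode Koszul pair is acyclic and that the spectral sequence of an infinite filtration converges to the cohomology; these are genuine obligations (they are the content of the Kapustin-style computation even in the usual K\"ahler case), and your proposal leaves them unproven, so as written it has a gap precisely at the crux of the theorem. The identity \eqref{eq:9.6} is the clean substitute: it establishes all higher-weight acyclicity at once, with no filtration and no convergence issue, and reduces everything to the exterior algebra $\overline{\Omega}^\bullet$, which has no higher modes because any $T^+$-descendant carries positive $L_0^+$-weight. Either carry out your graded computation in full, or insert the homotopy argument based on \eqref{eq:9.6} after constructing $\Phi$. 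A final bookkeeping point: $Q_0^+$ does not preserve the untwisted conformal weight you use to locate the generators (it sends functions, weight $0$, to sections of $\overline{L_1^+}$, weight $\tfrac12$); the grading it does preserve is the twisted one, with respect to which $\mathscr{O}$ and $\mathcal{E}$ both sit in $L_0^+$-weight zero and are separated by charge and by the minus-sector grading, so your identification of the generators should be restated in those terms.
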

\begin{proof}
Let $\tilde{\Omega}_M^\mathrm{ch}$ be the sheaf described by the Theorem. The fact that $\tilde{\Omega}_M^\mathrm{ch}$ is  a well defined sheaf of SUSY vertex algebras is proved in the same way as in the usual smooth case. Note that we have an obvious inclusion $\tilde{\Omega}_M^\mathrm{ch} \hookrightarrow U^\mathrm{ch}(E)$ given by the inclusions $\mathscr{O} \subset C^\infty(M)$ and $\mathcal{E} \subset E$. Moreover, since $Q^+_0$ acts as $d_{L_1^+}$ in the restriction $\wedge^\bullet \overline{L_1} \hookrightarrow U^\mathrm{ch}(E)$ and also in the restriction $\wedge^\bullet \overline{L_2} \hookrightarrow U^\mathrm{ch}(E)$ we see that $\tilde{\Omega}^\mathrm{ch}_M \subset \ker	Q^+_0$. 
Note also that from
\begin{equation}
[G^+_0, Q^+_0] = L^+_0~,
\label{eq:9.6}
\end{equation}
we see that the cohomologies in \eqref{eq:8.8} are concentrated in conformal weight zero. Let $U^\mathrm{ch}(E)_0$ be the kernel of $L^+_0$. Note that the spectrum of $J^+_0$ is non-negative on $U^\mathrm{ch}(E)_0$. Indeed, the basic fields of negative charge $-1$ are sections of $L_1^+$, which in turn have positive conformal weight $1$ with respect to $L^+_0$. We clearly have 
\begin{equation}
\tilde{\Omega}^\mathrm{ch}_M \subset \ker J^+_0 \cap \ker Q^+_0. 
\label{eq:9.7}
\end{equation}
therefore we have a map $\tilde{\Omega}^\mathrm{ch}_M \hookrightarrow \Omega^\mathrm{ch}_M$. We can check that it is an isomorphism locally. Let 
\begin{equation}
\overline{\Omega}^p := \ker d_{L_1^-}: \wedge^{p,0} \overline{L_1} \rightarrow \wedge^{p,1} \overline{L_1}. 
\label{eq:9.8}
\end{equation}
As in the usual complex geometry case, we have that $(\overline{\Omega}^\bullet, d_{L_1^+})$ is a resolution of $\mathbb{C}$. We can locally write
\begin{equation}
U^\mathrm{ch}(E)_0 = \tilde{\Omega}^\mathrm{ch}_M \otimes \overline{\Omega}^\bullet
\label{eq:9.9}
\end{equation}
where the grading on the left is by $J^+_0$-charge and agrees with the grading on the right. The differential $Q^+_0$ acts as $1\otimes d_{L_1^+}$ and the result follows. 
\end{proof}
\begin{cor}
The cohomology in \eqref{eq:8.8} is concentrated in degree zero, namely, $U^\mathrm{ch}(E)_0$ is a resolution of $\Omega^\mathrm{ch}_M$. 
\label{cor:1}
\end{cor}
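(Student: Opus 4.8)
The plan is to deduce the Corollary directly from the ingredients already assembled in the proof of Theorem \ref{thm:holomorphiccdr}, so that essentially no new computation is needed. First I would invoke the homotopy formula \eqref{eq:9.6}, $[G^+_0, Q^+_0] = L^+_0$, to show that the $Q^+_0$-cohomology is concentrated in conformal weight zero. Indeed, if $a$ is $Q^+_0$-closed with $L^+_0 a = \mu a$ for some $\mu \neq 0$, then $a = \mu^{-1}(G^+_0 Q^+_0 + Q^+_0 G^+_0)a = Q^+_0(\mu^{-1} G^+_0 a)$ is exact, so every cohomology class is represented in $U^\mathrm{ch}(E)_0 = \ker L^+_0$. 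This identifies $\mathscr{H}^*(U^\mathrm{ch}(E), Q^+_0)$ with $\mathscr{H}^*(U^\mathrm{ch}(E)_0, Q^+_0)$ and reduces the whole statement to understanding the complex $(U^\mathrm{ch}(E)_0, Q^+_0)$.

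Next I would compute this latter cohomology locally using the decomposition \eqref{eq:9.9}, $U^\mathrm{ch}(E)_0 = \tilde{\Omega}^\mathrm{ch}_M \otimes \overline{\Omega}^\bullet$, under which $Q^+_0$ acts as $1 \otimes d_{L_1^+}$. Since $(\overline{\Omega}^\bullet, d_{L_1^+})$ is a resolution of $\mathbb{C}$ (the relative $\overline{\partial}$-Poincar\'e lemma for the bicomplex \eqref{eq:9.1}), the cohomology of the tensor-product complex is $\tilde{\Omega}^\mathrm{ch}_M$ placed in $J^+_0$-charge zero and vanishes in every positive degree. Recall that $J^+_0$ is non-negative on $U^\mathrm{ch}(E)_0$ and that $Q^+_0$ raises it by one, so there is no room for cohomology in negative degree either. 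Since $\tilde{\Omega}^\mathrm{ch}_M \cong \Omega^\mathrm{ch}_M$ by Theorem \ref{thm:holomorphiccdr}, the cohomology is concentrated in degree zero and equals $\Omega^\mathrm{ch}_M$. Equivalently, the augmented complex
\[ 0 \to \Omega^\mathrm{ch}_M \to (U^\mathrm{ch}(E)_0)^0 \xrightarrow{Q^+_0} (U^\mathrm{ch}(E)_0)^1 \xrightarrow{Q^+_0} \cdots \]
is exact, which is precisely the assertion that $U^\mathrm{ch}(E)_0$ is a resolution of $\Omega^\mathrm{ch}_M$.

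The only point requiring care is that the local identification \eqref{eq:9.9} is an isomorphism of \emph{complexes} compatible with the grading by $J^+_0$-charge, and that the resolution property of $(\overline{\Omega}^\bullet, d_{L_1^+})$ holds at the sheaf level, so that passing to cohomology commutes with the tensor decomposition. Both facts were already arranged in the proof of Theorem \ref{thm:holomorphiccdr}, so I expect the main (and essentially only) subtlety to be bookkeeping: matching the weight-zero reduction produced by the homotopy $G^+_0$ with the $J^+_0$-grading used to run the tensor decomposition, and checking that the factor $\tilde{\Omega}^\mathrm{ch}_M$ indeed sits entirely in charge zero while $d_{L_1^+}$ strictly increases the charge of the $\overline{\Omega}^\bullet$ factor. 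Once these gradings are aligned, the Corollary follows immediately.
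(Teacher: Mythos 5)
Your proposal is correct and takes essentially the same route as the paper: the Corollary there is a direct byproduct of the proof of Theorem \ref{thm:holomorphiccdr}, which uses precisely the homotopy formula \eqref{eq:9.6} to concentrate the cohomology in conformal weight zero and then the local decomposition \eqref{eq:9.9} together with the resolution property of $(\overline{\Omega}^\bullet, d_{L_1^+})$ and the non-negativity of $J^+_0$ on $U^\mathrm{ch}(E)_0$ to identify the cohomology with $\tilde{\Omega}^\mathrm{ch}_M \cong \Omega^\mathrm{ch}_M$ in charge zero.
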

\begin{ex}
Consider a usual K\"ahler manifold as in Example \ref{ex:gkusual}. The sheaf $\Omega^\mathrm{ch}_M$ is isomorphic to the holomorphic chiral de Rham complex  constructed in \cite{malikov}. Indeed, 
in this case we have the that both hermitian complex structures agree $J=J^+=J^-$. We have natural isomorphisms then
\begin{equation}
\overline{L_1^+} \simeq T^{*0,1}, \qquad L_1^- \simeq T^{*1,0}
\label{eq:mamamamas}
\end{equation}
Recall from Proposition \ref{prop:diff1} that  $Q^+_0$ gets identified with with $\overline{\delta_+} = d_{L_1^+}$ which in this case is the usual operator $\overline{\partial}$.  If we exchange $\mathcal{J}_1$ and $\mathcal{J}_2$\footnote{This is to have the same signs as in the rest of this section} then we see that $\Omega^1 = T^*_\mathbb{C}$ is the holomorphic cotangent bundle, $\Theta^1=T_\mathbb{C}$ is the holomorphic Tangent bundle and $\mathscr{O}$ is the sheaf of holomorphic functions on $M$. Theorem \ref{thm:holomorphiccdr} gives an isomorphism of $\Omega^\mathrm{ch}_M$ with the holomorphic chiral de Rham complex constructed in \cite{malikov}. 
\end{ex}
\begin{rem}
In the usual K\"ahler case, we have by Theorem \ref{thm:holomorphiccdr} an embedding $\Omega^\mathrm{ch}_M \subset U^\mathrm{ch}(E)$. On the other hand, there is an obvious embedding of the holomorphic chiral de Rham complex of \cite{malikov} into the smooth chiral de Rham complex $U^\mathrm{ch}(E)$ given by the inclusions $T_\mathbb{C} \subset T\otimes \mathbb{C}$, $T^*_\mathbb{C} \subset T^* \otimes \mathbb{C}$ and $\mathscr{O} \subset C^\infty(M)$. These two inclusions are not the same. In fact they are an affine analog of the fact that the decomposition of cohomology given by \eqref{eq:8.3} is not the Dolbeaut decomposition but rather an orthogonal transformation of it \cite{gualtieri3}.
\label{rem:buenisimo}
\end{rem}
\begin{rem}
Since the zero mode $J^-_0$ is well defined on any generalized K\"ahler manifold, the sheaf $\Omega^\mathrm{ch}_M$ carries a grading given by fermionic charge. In the description of Theorem \ref{thm:holomorphiccdr} the sections of $\Omega^1$ have charge $+1$ while the sections of $\Theta^1$ have charge $-1$. 
\label{rem:cargas}
\end{rem}
\begin{rem}
One should view $(\mathcal{E}, \mathcal{D}, \langle \cdot, \cdot \rangle)$ as a Courant-Dorfman algebra over $\mathscr{O}$ \cite{roytenberg}. From this perspective, theorem \ref{thm:holomorphiccdr} is nothing more than the SUSY vertex algebra that one can trivially attach to any such algebra. 
\label{rem:royt}
\end{rem}
\begin{rem}
From Remark \ref{rem:cargas} and Corollary \ref{cor:1} it is natural to define a chiral analog of the Hodge decomposition as the sheaf cohomology 
\begin{equation}
H^{p,q}_\mathrm{ch}(M) = H^{q} \left( \Omega^{\mathrm{ch},p}_M \right),
\label{eq:9.9.1}
\end{equation}
where $\Omega^{\mathrm{ch},p}_M$ is the component of charge $p$ with respect to $J^-_0$. 

There are several subtleties in defining \eqref{eq:9.9.1}. First, we need to show that just as in the usual case, $\Omega^\mathrm{ch}_M$ carries a filtration such that the succesive quotients are (finitely presented) sheaves of $\mathscr{O}$-modules. This is done in parallel to the usual case. Having done so, one needs to develop the cohomology theory of sheaves of $\mathscr{O}$-modules on generalized K\"ahler manifolds. This is easily done in the bihermitian setup as explained below. We will not pursue this further in this article. We point however that Corollary \ref{cor:1} would imply a chiral version of the de Rham Theorem:
\begin{equation}
H^{p,q}_\mathrm{ch}(M) = \frac{\ker Q^+_{0}:U^\mathrm{ch}(E)^{p,q} \rightarrow U^{\mathrm{ch}}(E)^{p,q+1}}{\mathrm{im}\, Q^+_{0}: U^{\mathrm{ch}(E)^{p,q-1}} \rightarrow U^{\mathrm{ch}}(E)^{p,q}},
\label{eq:derham1}
\end{equation}
where $U^\mathrm{ch}(E)^{p,q}$ is consists of sections with charge $p$ (resp. $q$) with respect to $J^-_0$ (resp. $J^+_0$).
In the usual K\"ahler and non-supersymmetric case, one should compare this result to those of \cite{cheung}.
\end{rem}
The description of $\Omega^{\mathrm{ch}}_M$ in terms of ``holomorphic'' data is much easily done in the bihermitian setup. It will be useful to replace $\mathcal{J}_i$ with $-\mathcal{J}_i$ for simplicity (see the signs in Example \ref{ex:gkusual}).  We have isomorphisms \eqref{eq:b4.2}:
\begin{equation}
\overline{L_1} \simeq T^{*0,1}_+ \oplus T^{*0,1}_-,
\label{eq:10.1}
\end{equation}
Under this identification the differential $d_{L_1} = \overline{\partial_+} + \overline{\partial_-}$. It follows by definition then that we have natural isomorphisms
\begin{equation}
\mathscr{O} \simeq \mathscr{O}^+, \qquad \Omega^1 \simeq T^*_{\mathbb{C},+}, \qquad \Theta^1 \simeq T_{\mathbb{C},+}
\label{eq:10.2}
\end{equation}
where $\mathscr{O}^+$ (resp. $T^*_{\mathbb{C},+}$, $T_{\mathbb{C},+}$) is the sheaf of holomorphic functions (resp. holomorphic cotangent bundle, holomorphic tangent bundle) with respect to $J^+$. Recall (Prop. \ref{prop:4.2}) that $H$ is of type $(2,1) + (1,2)$ with respect to $J_+$. Let $H_+$ be its $(2,1)$ component. We see that
\begin{equation}
0 \rightarrow T^*_{\mathbb{C},+} \rightarrow \mathcal{E} \rightarrow T_{\mathbb{C},+} \rightarrow 0
\label{eq:10.3}
\end{equation}
is naturally a holomorphic exact Courant algebroid in the complex manifold $(M, J_+)$. It is the $H_+$-twisting of the standard Courant algebroid. We arrive at the following
\begin{prop}
Let $(M, \mathcal{J}_1, \mathcal{J}_2)$ be a generalized K\"ahler manifold, and let $(g,J_\pm)$ be the associated bihermitian data. The sheaf $\Omega^\mathrm{ch}_M$ is naturally isomorphic to the sheaf of SUSY vertex algebras $U^\mathrm{ch}(\mathcal{E})$ on the complex manifold\footnote{The sign change in $J_+$ is due to the sign changes in the previous paragraph.} $(M, -J_+)$ constructed as in Prop. \ref{prop:universal}, that is the holomorphic $H_+$-twisted \emph{chiral de Rham complex} of $(M, -J_+)$. 

Similarly, if we use $Q^-$ in place of $Q^+$ in \eqref{eq:8.8} we obtain a sheaf of SUSY vertex algebras on the complex manifold $(M, -J_-)$, its holomorphic twisted chiral de Rham complex. 
\label{prop:10.5}
\end{prop}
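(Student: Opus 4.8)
The plan is to derive the proposition from the presentation of $\Omega^\mathrm{ch}_M$ by generators and relations given in Theorem \ref{thm:holomorphiccdr}, feeding it through the bihermitian identifications \eqref{eq:10.1}--\eqref{eq:10.3}. Theorem \ref{thm:holomorphiccdr} exhibits $\Omega^\mathrm{ch}_M$ as the sheaf of SUSY vertex algebras generated by $i:\mathscr{O}\hookrightarrow\Omega^\mathrm{ch}_M$ and by the odd sections $j:\Pi\mathcal{E}\hookrightarrow\Omega^\mathrm{ch}_M$, subject to relations (1)--(5) of Proposition \ref{prop:universal}. But Proposition \ref{prop:universal} attaches to any (here holomorphic) exact Courant algebroid its sheaf $U^\mathrm{ch}(\cdot)$ by exactly this universal recipe. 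Hence the whole assertion reduces to recognizing the Courant--Dorfman data $(\mathcal{E},\mathcal{D},\mathscr{O})$, after the sign change $\mathcal{J}_i\mapsto-\mathcal{J}_i$, as the holomorphic exact Courant algebroid \eqref{eq:10.3} on $(M,-J_+)$.

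First I would use \eqref{eq:10.1} together with the decomposition $d_{L_1}=\overline{\partial}_++\overline{\partial}_-$ to produce the identifications \eqref{eq:10.2}: $\mathscr{O}\simeq\mathscr{O}^+$, $\Omega^1\simeq T^*_{\mathbb{C},+}$ and $\Theta^1\simeq T_{\mathbb{C},+}$. This realizes $\mathscr{O}$ as the sheaf of $J_+$-holomorphic functions and the underlying bundle of $\mathcal{E}=\Theta^1\oplus\Omega^1$ as $T_{\mathbb{C},+}\oplus T^*_{\mathbb{C},+}$ fitting into the sequence \eqref{eq:10.3}. The pairing $\langle\cdot,\cdot\rangle$ on $\mathcal{E}$ and the anchor action \eqref{eq:9.5} are then immediately the standard ones by restriction, and the operator $\mathcal{D}$ is the restriction of the ambient one. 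The only nontrivial point left is the Dorfman bracket.

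The main obstacle is precisely the verification of \eqref{eq:10.3}, i.e. that restricting the smooth $H$-twisted Dorfman bracket to the $J_+$-holomorphic sections of $\mathcal{E}$ returns the standard holomorphic Dorfman bracket twisted only by $H_+$, the $(2,1)$-part of $H$. The decisive input is Proposition \ref{prop:4.2}: because $H$ is of type $(2,1)+(1,2)$ with respect to $J_+$, the $(3,1)$-component of $dH=0$ gives $\partial_+ H_+=0$, so $H_+$ is a $\partial_+$-closed $(2,1)$-form and defines, through its Dolbeault class, a holomorphic gerbe whose associated holomorphic exact Courant algebroid is exactly \eqref{eq:10.3}; the complementary type components of $H$ contribute terms that vanish or become $\overline{\partial}_+$-exact on holomorphic sections and therefore do not affect the bracket on $\mathcal{E}$. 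Granting this identification, the generators-and-relations sheaf of Theorem \ref{thm:holomorphiccdr} coincides term by term with $U^\mathrm{ch}(\mathcal{E})$ produced by Proposition \ref{prop:universal} from the holomorphic Courant algebroid on $(M,-J_+)$, which establishes the first claim.

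Finally, for the second statement I would repeat the argument verbatim with $\mathcal{J}_2$ in the role of $\mathcal{J}_1$: using the isomorphism \eqref{eq:menostodavia2} attached to $\rho_2$, the relevant cohomology is computed by $Q^-$, the bracket $d_{L_2}$ decomposes along $L_1^+\oplus\overline{L_1^-}$, and the same type analysis---now with $H$ of type $(2,1)+(1,2)$ with respect to $J_-$---identifies the resulting holomorphic exact Courant algebroid with the $H_-$-twisted standard one on $(M,-J_-)$. Since this is the mirror of the previous computation under $+\leftrightarrow-$, I would record it as a symmetry rather than reproduce it.
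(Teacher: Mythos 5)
Your proposal is correct and follows essentially the same route as the paper: the paper's own (implicit) proof is exactly the combination of the identifications \eqref{eq:10.1}--\eqref{eq:10.3} with the generators-and-relations description of Theorem \ref{thm:holomorphiccdr}, after which the statement is immediate from the universal property of Proposition \ref{prop:universal}, and the second claim follows by the same $+\leftrightarrow-$ symmetry you invoke. Your added observation that $\partial_+ H_+ = 0$ follows from the $(3,1)$-component of $dH=0$ is a useful justification of the paper's bare assertion that \eqref{eq:10.3} is the $H_+$-twisting of the standard holomorphic Courant algebroid.
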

\begin{rem}
Note that we may in fact construct $4$-sheaves as half-twisted models if we take as BRST differentials $Q^\pm_0$ or $G^\pm_0$. The sheaves obtained using the latter differentials are the anti-holomorphic counterparts to the sheaves described in Prop \ref{prop:10.5}. In particular, in a usual generalized K\"ahler manifold as in Example \ref{ex:gkusual}, we have $J_+ = J_- = -J$ and therefore the sheaf computed with either $Q^+_0$ or $Q^-_0$ coincides with the usual holomorphic chiral de Rham complex of $M$ as in \cite{malikov} while the sheaf computed with either $G^+_0$ or $G^-_0$ is the anti-chiral de Rham complex of $M$. 
\label{rem:10.6}
\end{rem}
The following follows immediately from Theorem \ref{thm:1}:
\begin{prop}
Let $(M, \mathcal{J}_1, \mathcal{J}_2)$ be a generalized calabi-Yau metric manifold. Then the chiral de Rham complex $\Omega^\mathrm{ch}_M$ is a sheaf of topological vertex algebras, i.e. there exists an embedding of the $N=2$ superconformal vertex algebra into the space of global sections of $\Omega^\mathrm{ch}_M$. This algebra is generated by the cohomology classes of the fields $J^-$ and $H^-$ constructed in Theorem \ref{thm:1}.
\label{prop:8.3}
\end{prop}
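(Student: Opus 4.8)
The plan is to show that, after the topological twist of the plus sector that defines $\Omega^\mathrm{ch}_M = \mathscr{H}^*(U^\mathrm{ch}(E), Q^+_0)$ in Definition \ref{defn:8.1}, the entire $N=2$ structure of the minus sector produced in Theorem \ref{thm:1} survives to cohomology. First I would record the two standard features of the topologically twisted $N=2$ algebra: the operator $Q^+_0 = \tfrac{1}{2}(H^+_{(0|1)} + iJ^+_{(0|0)})$ squares to zero and acts as an odd derivation of both the normally ordered product and the $\Lambda$-bracket of $U^\mathrm{ch}(E)$. Both facts are immediate consequences of the plus-sector relations in \eqref{eq:7.thm2} (these are precisely the defining relations of a topological $N=2$ vertex algebra, cf. \cite{heluani8}), and they are exactly what guarantees that $\mathscr{H}^*(U^\mathrm{ch}(E), Q^+_0)$ inherits the structure of a sheaf of SUSY vertex algebras.

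The key input is the commutativity of the two sectors established in Theorem \ref{thm:1}, namely ${[J^+}_\Lambda J^-] = {[J^+}_\Lambda H^-] = {[H^+}_\Lambda J^-] = {[H^+}_\Lambda H^-] = 0$. Since $Q^+_0$ is assembled solely from the zero modes $H^+_{(0|1)}$ and $J^+_{(0|0)}$, the vanishing of these $\Lambda$-brackets means that every Fourier mode of $J^+$ and $H^+$ graded-commutes with every mode of $J^-$ and $H^-$; in particular $Q^+_0 J^- = 0$ and $Q^+_0 H^- = 0$. Hence $J^-$ and $H^-$ are $Q^+_0$-closed and define cohomology classes $[J^-], [H^-]$ in the global sections of $\Omega^\mathrm{ch}_M$.

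Next I would push the minus-sector relations down to cohomology. Because $Q^+_0$ is a derivation of the $\Lambda$-bracket and annihilates $J^-$ and $H^-$, the minus-sector half of \eqref{eq:7.thm2} descends verbatim: ${[[J^-]}_\Lambda [J^-]] = -([H^-] + \tfrac{c}{3}\lambda\chi)$, ${[[H^-]}_\Lambda [J^-]] = (2T + 2\lambda + \chi S)[J^-]$ and ${[[H^-]}_\Lambda [H^-]] = (2T + 3\lambda + \chi S)[H^-] + \tfrac{c}{3}\lambda^2\chi$, with $c = \tfrac{3}{2}\dim M$. These are exactly the relations of the $N=2$ superconformal vertex algebra, so $[J^-]$ and $[H^-]$ generate a copy of it inside the global sections of $\Omega^\mathrm{ch}_M$.

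The only point requiring genuine care — and what I would regard as the main obstacle — is upgrading this ``copy'' to an honest \emph{embedding}, i.e. injectivity of the induced homomorphism from the $N=2$ vertex algebra. Here the grading does the work: since the minus sector commutes with the plus sector, $J^-$ and $H^-$, and hence the whole subalgebra $V^-$ they generate in $U^\mathrm{ch}(E)$, lie in conformal weight zero and charge zero for $L^+_0$ and $J^+_0$. By the proof of Theorem \ref{thm:holomorphiccdr} the $J^+_0$-charge is non-negative on $U^\mathrm{ch}(E)_0 = \ker L^+_0$, while $Q^+_0$ raises this charge by one; consequently the image of $Q^+_0$ contains no nonzero charge-zero element. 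Therefore no nonzero element of $V^-$ can be $Q^+_0$-exact, and $V^-$ maps isomorphically onto its image in $\Omega^\mathrm{ch}_M$ (compare Corollary \ref{cor:1}). Modulo this grading bookkeeping, the proposition is an immediate consequence of Theorem \ref{thm:1}.
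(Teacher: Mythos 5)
Your proposal is correct and takes essentially the same route as the paper: the paper simply declares Proposition \ref{prop:8.3} an immediate consequence of Theorem \ref{thm:1}, and your argument is precisely the detailed justification of that claim --- commutativity of the two sectors makes $J^-$ and $H^-$ into $Q^+_0$-closed fields whose $N=2$ relations descend to the BRST cohomology. Your additional care about non-exactness (injectivity) via the $L^+_0$-weight and $J^+_0$-charge gradings is the same bookkeeping the paper relegates to the proof of Theorem \ref{thm:holomorphiccdr} and Corollary \ref{cor:1}, so nothing is missing.
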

\begin{rem}
Given this last proposition it is natural to define following \cite{borisov1} the \emph{2-variable Elliptic genus} of a compact generalized Calabi-Yau metric manifold $M$ as the supertrace
\begin{equation}
\mathcal{E}ll_M(y,q) := y^{- \frac{\dim M}{2}} \mathrm{st}_{H^* (\Omega(E))} y^{J^-_0}q^{L^-_0}.
\label{eq:8.9}
\end{equation}
\label{rem:8.4}
However, the no-go theorems (for a review see  \cite{witt1} and references therein)   state that if $M$ is compact then $H=0$, in which case the dilaton vanishes in the fields $J^\pm, H^\pm$ and we reduce to the usual Calabi-Yau case.
\end{rem}
We conclude this section by mentioning the different topological sectors of these sheaves \cite{heluani9}. Let $(M, \mathcal{J}_1, \mathcal{J}_2)$ be a generalized Calabi-Yau manifold. 
\begin{enumerate}
\item \textbf{B-twist} In this twist we consider the BRST charge $Q^B = Q^+ + Q^-$. The complex $(U^\mathrm{ch}(E), Q^B)$ is quasi-isomorphic to $(\wedge^\bullet \overline{L_1}, d_{L_1})$. If $M$ is a usual Calabi-Yau manifold with the generalized complex structures as in \ref{ex:gkusual}, the cohomology of this complex is given by the Hochschild cohomology of $M$: $\oplus H^{p,p} M$
\item \textbf{A-twist} Consider now the BRST differential  $Q^A = Q^+ + G^-$. In this case the complex $(U^\mathrm{ch}(E), Q^A)$ is quasi-isomorphic to $(\wedge^\bullet \overline{L_2}, d_{L_2})$ and it the usual Calabi-Yau case its cohomology is the de Rham cohomology of $M$.
\end{enumerate}

\section{Discussion} \label{sec:8}

In the present article we described explicitly an $N=2,2$ structure on the chiral-anti-chiral de Rham complex of a generalized Calabi-Yau metric manifold with or without $H$-flux. We also studied the half-twisted model, and showed that it can be described purely in terms of ``holomorphic'' data. In the course of doing so we clarified some results scattered in the literature on  generalized Calabi-Yau metric manifolds, as well as produced some new interesting ones. In particular, we showed in Section \ref{sec:5} how unimodularity of the Lie algebroids appearing in the generalized geometry side, corresponds to the properties of traces of connections on the bihermitian side.
  In the usual Calabi-Yau case, this corresponds to Ricci-flatness.

This article generates some questions which would be interesting to address in the future. 
\begin{itemize}
\item The analog of the Clifford or Dolbeaut decomposition of cohomology makes sense on any generalized complex manifold satisfying the $dd^c$ lemma \cite{cavalcanti}. Presumably one should be able to define the sheaf $\Omega^\mathrm{ch}_M$ in such cases. 
\item It would be interesting to match the discrepancy between the holomorphic CDR of \cite{malikov} and $\Omega^\mathrm{ch}_M$ as presented in Remark \ref{rem:buenisimo}, with the results of \cite{witten} and \cite{kapustin}. We suspect that this discrepancy corresponds to different choices of Hamiltonians. 
\item The notion of holomorphic Courant algebroid, or Courant-Dorfman algebra over $\mathscr{O}$ should be easy to define on any generalized K\"ahler manifold. To such objects one should be able to attach a vertex algebra in the same way as in the smooth case.  
\item Although the notion of Elliptic genus in two variables seems not to give anything new due to the no-go theorems of generalized complex geometry. Presumably one could develop a theory of cohomologies with compact support or similar, to make sense of \eqref{eq:8.9} in the non-compact case, where $H$-flux will play a crucial role. Note however that the results of \cite{cheung} should be easily generalized in this setting to the generalized K\"ahler case. 
\end{itemize}

\appendix
\section{SUSY conformal algebras} \label{sec:appendixc}
Let $\mathfrak{h}$ be super Lie algebra spanned by an odd element $\chi$ and an even element $\lambda$ such that $[\chi,\chi] = -2\lambda$ and $\lambda$ is central. Let $\cH$ be its universal enveloping algebra. We will consider another set of generators $S, -T$ for the same algebra. 
\begin{defn}
     \emph{An $N_K=1$ SUSY Lie
    conformal  algebra} is a $\cH$-module $\cR$ with an operation
    $[\,_{\Lambda}\,]: \cR \otimes \cR \rightarrow \cH
    \otimes \cR$ of degree
    $1$ satisfying:
    \begin{enumerate}
        \item Sesquilinearity
            \begin{equation*}
                [S a_\Lambda b] =  \chi [a_\Lambda b]
                \qquad [a_\Lambda S b] = -(-1)^{a} \left(S
                + \chi
                \right) [a_\Lambda b]
                %\label{eq:k.sesqui.1}
            \end{equation*}
        \item Skew-Symmetry:
            \begin{equation*}
                [b_\Lambda a] =  (-1)^{a b} [b_{-\Lambda -
                \nabla} a]
                %\label{eq:k.skew.1}
            \end{equation*}
            Here the bracket on the right hand side is computed as
            follows: first compute $[b_{\Gamma}a]$, where $\Gamma =
            (\gamma, \eta)$ are generators of $\cH$ super commuting
            with $\Lambda$, then replace $\Gamma$ by $(-\lambda - T,
            -\chi - S)$.
        \item Jacobi identity:
            \begin{equation*}
                [a_\Lambda [b_\Gamma c]] = -(-1)^{a} \left[
                [ a_\Lambda b]_{\Gamma + \Lambda} c \right] +
                (-1)^{(a+1)(b+1)} [b_\Gamma [a_\Lambda c]]
                %\label{eq:k.jacobi.1}
            \end{equation*}
            where the first bracket on the right hand side is computed as in Skew-Symmetry
            and the identity is an identity in $\cH^{\otimes 2} \otimes\cR$.
    \end{enumerate}
    \label{defn:k.conformal.1}
    \end{defn}
    Given an $N_K=1$ SUSY VA, it is canonically an $N_K=1$ SUSY Lie conformal algebra with
    the bracket defined in (\ref{eq:2.4.2}). Moreover, given an $N_K=1$ Lie
    conformal algebra $\cR$, there exists a unique $N_K=1$ SUSY VA called the
    \emph{universal enveloping SUSY vertex algebra of $\cR$} with the property
    that if $W$ is another $N_K=1$ SUSY VA and $\varphi : \cR \rightarrow W$ is a
    morphism of Lie conformal algebras, then $\varphi$ extends uniquely to a
    morphism $\varphi: V \rightarrow W$ of SUSY VAs.
    The operations (\ref{eq:2.4.2}) satisfy:
    \begin{itemize}
        \item Quasi-Commutativity:
            \begin{equation*}
                ab - (-1)^{ab} ba = \int_{-\nabla}^0 [a_\Lambda
                b] d\Lambda
                %\label{eq:2.8.1}
            \end{equation*}
        \item Quasi-Associativity
            \begin{equation*}
                (ab)c - a(bc) = \sum_{j \geq 0}
                a_{(-j-2|1)}b_{(j|1)}c + (-1)^{ab} \sum_{j \geq 0}
                b_{(-j-2|1)} a_{(j|1)}c
                %\label{eq:2.8.2}
            \end{equation*}
        \item Quasi-Leibniz (non-commutative Wick formula)
            \begin{equation*}
                [a_\Lambda bc ] = [a_\Lambda b] c + (-1)^{(a+1)b}b
                [a_\Lambda c] + \int_0^\Lambda [ [a_\Lambda
                b]_\Gamma c] d \Gamma
                %\label{eq:2.8.3}
            \end{equation*}
    \end{itemize}
    where the integral $\int d\Lambda$ is $\partial_\chi \int d\lambda$. In
    addition, the vacuum vector is a unit for the normally ordered product
    and the endomorphisms $S, T$ are odd and even derivations respectively of
    both operations.


\begin{thebibliography}{10}

\bibitem{zabzine4}
F.~Bonechi and M.~Zabzine.
\newblock {Poisson sigma model on the sphere}.
\newblock {\em Commun. Math. Phys.}, 285:1033--1063, 2009.

\bibitem{borisov1}
L.~Borisov and A.~Libgober.
\newblock Elliptic genera of toric varieties and applications to mirror
  symmetry.
\newblock {\em Invent. Math.}, 140(2):453--485, 2000.

\bibitem{Bredthauer:2006hf}
A.~Bredthauer, U.~Lindstr{\"o}m, J.~Persson, and M.~Zabzine.
\newblock {Generalized K{\"a}hler geometry from supersymmetric sigma models}.
\newblock {\em Lett. Math. Phys.}, 77:291--308, 2006.

\bibitem{bressler1}
P.~Bressler.
\newblock The first {P}ontryagin class.
\newblock {\em Compos. Math.}, 143(5):1127--1163, 2007.

\bibitem{cavalcanti}
G.~R. Cavalcanti.
\newblock New aspects of the $dd^c$ lemma.
\newblock {\em Oxford Ph.D. thesis}, 2004.

\bibitem{cheung}
P.~Cheung.
\newblock The {W}itten genus and vertex algebras.
\newblock {\em math.AT/0811.1418}, 2008.

\bibitem{heluani10}
J.~Ekstrand, R.~Heluani, J.~K{\"a}ll{\'e}n, and M.~Zabzine.
\newblock {Non-linear sigma models via the chiral de Rham complex}.
\newblock {\em to appear in Adv. in Theor. and Math. Phys. preprint:
  hep-th/0905.4447}, 2009.

\bibitem{weinstein1}
S.~Evens, J.~Lu, and A.~Weinstein.
\newblock Transverse measures, the modular class and a cohomology pairing for
  {L}ie algebroids.
\newblock {\em Quart. J. Math. Oxford Ser. (2)}, 50(200):417--436, 1999.

\bibitem{frenkelnekrasov}
E.~Frenkel, A.~Losev, and N.~Nekrasov.
\newblock Instantons beyond topological theory {II}.
\newblock {\em arxiv:0803.3302v1[hep.th]}, 2008.

\bibitem{gerbes2}
V.~Gorbounov, F.~Malikov, and V.~Schechtman.
\newblock Gerbes of chiral differential operators. {II}. {V}ertex algebroids.
\newblock {\em Invent. Math.}, 155(3):605--680, 2004.

\bibitem{gualtieri1}
M.~Gualtieri.
\newblock Generalized complex geometry.
\newblock {\em Oxford Ph.D. thesis}, 2004.

\bibitem{gualtieri3}
M.~Gualtieri.
\newblock {Generalized geometry and the Hodge decomposition}.
\newblock {\em arxiv:math/0409093}, 2004.

\bibitem{gualtieri2}
M.~Gualtieri.
\newblock Generalized complex geometry.
\newblock {\em math/07032298}, 2007.

\bibitem{heluani8}
R.~Heluani.
\newblock {Supersymmetry of the chiral de Rham complex II: Commuting sectors}.
\newblock {\em {Int. Math. Res. Not.}}, IMRN(6):953--987, 2009.

\bibitem{heluani3}
R.~Heluani and V.G. Kac.
\newblock Super symmetric vertex algebras.
\newblock {\em Communications in mathematical physics}, (271):103--178, 2007.

\bibitem{heluani9}
R.~Heluani and M.~Zabzine.
\newblock {Generalized Calabi-Yau Manifolds and the chiral de Rham complex}.
\newblock {\em Advances in Mathematics}, 223(5):1815--1844, 2009.

\bibitem{kac:vertex}
V.~G. Kac.
\newblock {\em Vertex algebras for beginners}, volume~10 of {\em University
  Lecture}.
\newblock American Mathematical Society, 1996.

\bibitem{kapustin}
A.~Kapustin.
\newblock Chiral de {R}ham complex and the half-twisted sigma-model.
\newblock {\em arxiv:hep-th/0504074v1}, 2005.

\bibitem{Lindstrom:2004iw}
U.~Lindstr{\"o}m, R.~Minasian, A.~Tomasiello, and M.~Zabzine.
\newblock {Generalized complex manifolds and supersymmetry}.
\newblock {\em Commun. Math. Phys.}, 257:235--256, 2005.

\bibitem{malikov}
A.~Malikov, V.~Shechtman, and A.~Vaintrob.
\newblock Chiral de {R}ham complex.
\newblock {\em Comm. Math. Phys}, 204(2):439--473, 1999.

\bibitem{roytenberg}
D.~Roytenberg.
\newblock Courant-{D}orfman algebras and their cohomology.
\newblock {\em Lett. Math. Phys.}, 90(1-3):311--351, 2009.

\bibitem{weinstein2}
A.~Weinstein.
\newblock The modular automorphism group of a {P}oisson manifold.
\newblock {\em J. Geom. Phys.}, 23(3-4):379--394, 1997.

\bibitem{witt1}
F.~Witt.
\newblock Calabi-{Y}au manifolds with {$B$}-fields.
\newblock {\em Rend. Semin. Mat. Univ. Politec. Torino}, 66(1):1--21, 2008.

\bibitem{witten}
E.~Witten.
\newblock Two-dimensional models with {$(0,2)$} supersymmetry: perturbative
  aspects.
\newblock {\em Adv. Theor. Math. Phys.}, 11(1):1--63, 2007.

\bibitem{MR2322403}
M.~Zabzine.
\newblock Lectures on generalized complex geometry and supersymmetry.
\newblock {\em Arch. Math. (Brno)}, 42(suppl.):119--146, 2006.

\end{thebibliography}
\end{document}